\documentclass[11pt]{amsart}
\textheight 225mm \textwidth 165mm \topmargin -1.0cm
\oddsidemargin 1.8 cm \evensidemargin 1.8 cm \hoffset=-1.8cm

\usepackage{amsthm}
\usepackage{array}
\usepackage{amsmath}
\usepackage{enumerate}
\usepackage{tikz}
\usetikzlibrary{calc}
\usepackage{amssymb}
\usepackage{latexsym}
\usepackage{amsfonts}
\usepackage{color}
\usepackage{mathrsfs}
\usepackage{epsfig,helvet}

\DeclareMathOperator{\Ima}{Im}

\theoremstyle{plain} \numberwithin{equation}{section}
\newtheorem{thm}{Theorem}[section]
\newtheorem{theorem}[thm]{Theorem}
\newtheorem{lemma}[thm]{Lemma}
\newtheorem{corollary}[thm]{Corollary}

\newtheorem{definition}[thm]{Definition}
\newtheorem{proposition}[thm]{Proposition}
\newtheorem*{remark}{Remark}
\begin{document}
\setcounter{page}{1}

\title[ Padhan, Nayak and Pati]{Detecting Capable Lie Superalgebra }

\author[Padhan]{Rudra Narayan Padhan}
\address{Department of Mathematics, National Institute of Technology,  \\
         Rourkela, 
          Odisha-769028 \\
                India}
\email{rudra.padhan6@gmail.com}
\author[Nayak]{Saudamini Nayak}
\address{Institute of Mathematics \& Applications,  \\
         Bhubaneswar, 751029
          Odisha\\
                India}
\email{anumama.nayak07@gmail.com}

\author[Pati]{K.C Pati}
\address{Department of Mathematics, National Institute of Technology,  \\
         Rourkela, 
          Odisha-769028 \\
                India}
\email{kcpati@nitrkl.ac.in }

\subjclass[2010]{Primary 17B30; Secondary 17B05.}
\keywords{ Heisenberg Lie Superalgebra; Multiplier; Capability; Non-abelian tensor and exterior product }
\maketitle
\section{Abstract}

In this article we show that distributive law holds for non-abelian tensor product of Lie superalgebras under certain direct sums. There by we obtain a rule for non-abelian exterior square of a Lie superalgebra. We define capable Lie superalgebra and then give some chracterization. Specifically we prove that epicenter of a Lie superalgebra is equal to exterior center. Finally we classify all capable Lie superalgebras whose derived subalgebra dimension is atmost one. As an application to those results we have shown that there exists at least one non-abelian nilpotent capable Lie superalgebra $L$ of dimension $m+n \geq 3$ where $\dim L=(m \mid n)$.

\section{Introduction}

Lie superalgebras have applications in many areas of Mathematics and Theoretical Physics as they can be used to describe supersymmetry. Kac \cite{Kac1977} gives a comprehensive description of mathematical theory of Lie superalgebras, and establishes the classification of all finite dimensional simple Lie superalgebras over an algebraically closed field of characteristic zero. In the last few years the theory of Lie superalgebras has evolved remarkably, obtaining many results in representation theory and classification. Most of the results are extension of well known facts of Lie algebras. But the classification of all finite dimensional nilpotent Lie superalgebras is still an open problem like that of finite dimensional nilpotent Lie algebras.

\smallskip

In 1904 I. Schur introduced the Schur multiplier and cover of a group in his work on projective representation. Batten \cite{Batten1993} introduced and studied Schur multiplier and cover of a Lie algebra and later on, studied by several authors \cite{BS1996, Batten1996}. For a finite dimensional Lie algebra $L$ over a field $\mathbb{F}$ the free presentation of $L$ is $0\longrightarrow R \longrightarrow F\longrightarrow L$, where $F$ is a free Lie algebra and $R$ is an ideal of $F$. Then the Schur multiplier $\mathcal{M}(L)$ is isomorphic to $F' \cap R/[F,R]$. Moneyhun \cite{Moneyhun1994} proved that for a finite dimensional Lie algebra $L$ of dimension $n$, $\dim \mathcal{M}(L)=\dfrac{1}{2}n(n-1)-t(L)$, where $t(L)\geq 0$. This bound is used to classify finite dimensional nilpotent Lie algebras with some small values of $t(L)$. Specifically the complete classification of $L$ with $t(L)\leq 8$, has been depicted in \cite{Batten1996, Hardy1998, Hardy2005, Russo2011}. An improved bound for $\dim \mathcal{M}(L)$ for a non-abelian nilpotent Lie algebra $L$ is further given by Niroomand and Russo \cite{Niroomand2011} and using this some classifications of $L$ are done with lesser effort.

\smallskip

The notion of multiplier and cover for Lie algebras is generalized to the case of Lie superalgebras and studied in \cite{Nayak2018, SN2018a}. Some bounds are given for the dimension of multiplier of Lie superalgebras. Specifically the second author, using the multiplier of Lie superalgebra $L$, has defined the function $s(L)$ and classification is done for all finite dimensional nilpotent Lie superalgebra $L$ with $s(L) \leq 2$ which in turn gives classification of $L$ with $t(L)\leq 6$ \cite{Nayak2018, SN2018b}.

 \smallskip
 
 Baer \cite{Baer1938} defined the notion of capable group. Beyl et al in \cite{Beyl1979} introduced the epicenter $Z^{*}(G)$ of a group $G$ and they proved that a group $G$ is capable if and only if $Z^{*}(G)=1$. Further exterior square of a group was studied for the first time in \cite{Brown2010}, which has an interesting relation with capability of a group. Exterior center $Z^{\wedge}(G)$ of a group $G$ is defined as $Z^{\wedge}(G)=\{g \in G|~g\wedge h=1,~ \forall~h \in G\}$. Ellis \cite{Ellis1995} proved that $Z^{\wedge}(G)=Z^{*}(G)$. Similarly the notion of epicenter $Z^{*}(L)$ of a Lie algebra $L$ is given by Alamian et al \cite{Alamian2008}. The non-abelian tensor product and exterior product of Lie algebras are defined and some of the properties are studied by Ellis \cite{Ellis1987, Ellis1991, Ellis1995}. Recently, Niroomand et al \cite{PMF2013} investigated the connection between epicenter and exterior center of a finite dimensional Lie algebra. Finally, they classified all capable Heisenberg Lie algebras and in continuation, as an application they have shown that there exists atleast one capable Lie algebra of arbitrary corank.

 \smallskip
 
 The exterior product of Lie superalgebras and some of its properties are discussed in the paper \cite{GKL2015}. In this paper we prove some important properties of non-abelian tensor product as well as exterior product of Lie superalgebras. Further we define capable Lie superalgebra. Then characterization of capable Lie superalgebra are given which are extended versions of results of Lie algebra in \cite{Alamian2008}. Following \cite {PMF2013} first we classify all capable Heisenberg Lie superalgebras and then all finite dimensional capable nilpotent Lie superalgebras with dimension of derived subalgebras atmost one. We believe, this leads to the classification of non-ablian finite dimensional nilpotent Lie superalgebra $L$ with $s(L)=3$.

\section{Preliminaries}

Let $\mathbb{Z}_{2}=\{\bar{0}, \bar{1}\}$ be a field. A $\mathbb{Z}_{2}$-graded vector space $V$ is simply a direct sum of vector spaces $V_{\bar{0}}$ and $V_{\bar{1}}$, i.e., $V = V_{\bar{0}} \oplus V_{\bar{1}}$. It is also referred as a superspace. We consider all vector superspaces and superalgebras are over $\mathbb{F}$ (characteristic of $\mathbb{F} \neq 2,3$). Elements in $V_{\bar{0}}$ (resp. $V_{\bar{1}}$) are called even (resp. odd) elements. Non-zero elements of $V_{\bar{0}} \cup V_{\bar{1}}$ are called homogeneous elements. For a homogeneous element $v \in V_{\sigma}$, with $\sigma \in \mathbb{Z}_{2}$ we set $|v| = \sigma$ is the degree of $v$. A vector subspace $U$ of $V$ is called $\mathbb{Z}_2$-graded vector subspace(or superspace) if $U= (V_{\bar{0}} \cap U) \oplus (V_{\bar{1}} \cap U)$. We adopt the convention that whenever the degree function appeared in a formula, the corresponding elements are supposed to be homogeneous. 
A  Lie superalgebra (see \cite{Kac1977, Musson2012}) is a superspace $L = L_{\bar{0}} \oplus L_{\bar{1}}$ with a bilinear mapping
$ [., .] : L \times L \rightarrow L$ satisfying the following identities:

\begin{enumerate}
\item $[L_{\alpha}, L_{\beta}] \subset L_{\alpha+\beta}$, for $\alpha, \beta \in \mathbb{Z}_{2}$ ($\mathbb{Z}_{2}$-grading),
\item $[x, y] = -(-1)^{|x||y|} [y, x]$ (graded skew-symmetry),
\item $(-1)^{|x||z|} [x,[y, z]] + (-1)^{ |y| |x|} [y, [z, x]] + (-1)^{|z| |y|}[z,[ x, y]] = 0$ (graded Jacobi identity),
\end{enumerate}
for all $x, y, z \in L$. 

\smallskip

For a Lie superalgebra $L = L_{\bar{0}} \oplus L_{\bar{1}}$, the even part $L_{\bar{0}}$ is a Lie algebra and $L_{\bar{1}}$ is a $L_{\bar{0}}$-module. If $L_{\bar{1}} = 0$, then $L$ is just Lie algebra. But in general a Lie superalgebra is not a Lie algebra. Lie superalgebra without even part, i.e., $L_{\bar{0}} = 0$, is an abelian Lie superalgebra, as $[x, y] = 0$ for all $x, y \in L$. A sub superalgebra of $L$ is a $\mathbb{Z}_{2}$-graded vector subspace which is closed under bracket operation. Take $[L, L]$, it is an graded subalgebra of $L$ and is denoted as $L'$. A $\mathbb{Z}_{2}$-graded subspace $I$ is a graded ideal of $L$ if $[I,L]\subseteq I$. The ideal 
\[Z(L) = \{z\in L : [z, x] = 0\;\mbox{for all}\;x\in L\}\] 
is a graded ideal and it is called the {\it center} of $L$. If $I$ is an ideal of $L$, the quotient Lie superalgebra $L/I$ inherits a canonical Lie superalgebra structure such that the natural projection map becomes a homomorphism. By a homomorphism between superspaces $f: V \rightarrow W $ of degree $|f|\in \mathbb{Z}_{2}$, we mean a linear map satisfying $f(V_{\alpha})\subseteq W_{\alpha+|f|}$ for $\alpha \in \mathbb{Z}_{2}$. In particular, if $|f| = \bar{0}$, then the homomorphism $f$ is called homogeneous linear map of even degree. A Lie superalgebra homomorphism $f: L \rightarrow M$ is a  homogeneous linear map of even degree such that $f[x,y] = [f(x), f(y)]$ holds for all $x, y \in L$. The notions of {\it epimorphisms, isomorphisms} and {\it auotomorphisms} have the obvious meaning. Throughout, for superdimension of Lie superalgebra $L$ we simply write $\dim(L)=(m\mid n)$, where $\dim L_{\bar{0}} = m$ and $\dim L_{\bar{1}} = n$. Also throughout $A(m \mid n)$ denotes an abelian Lie superalgebra where $\dim A=(m\mid n)$. By Heisenberg Lie superalgebra we mean special Heisenberg Lie superalgebra in this article.

\smallskip

 Here we give some results on multipliers of Lie superalgebras. The concept of stem extension, cover and multiplier of Lie superalgebra is defined and studied in \cite{Nayak2018, SN2018a}. Free presentation of a Lie superalgebra $L$ is the extension $0\longrightarrow R \longrightarrow F\longrightarrow L$, where $F$ is a free Lie superalgebra, then $\mathcal{M}( L) \cong F'\cap R / [F,R]$. In \cite{GKL2015}, on can find the link between the multiplier and the second homology of of a Lie superalgebra $L$.
 
 \begin{lemma} \label{lem3.1}\cite[See Corollary 6.5]{GKL2015}
 For a free presentation $0\longrightarrow R \longrightarrow F\longrightarrow L$ of a Lie superalgebra $L$, there is an isomorphism of supermodules
 \[H_{2}(L) \cong F' \cap R / [F,R].\]
 \end{lemma}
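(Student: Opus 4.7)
The plan is to adapt the classical Hopf formula to the super setting via the five-term exact sequence associated with the Hochschild--Serre spectral sequence of the extension $0 \to R \to F \to L \to 0$ of Lie superalgebras.

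First I would invoke (or construct) the spectral sequence
$$E^2_{p,q} = H_p(L;\,H_q(R)) \Longrightarrow H_{p+q}(F),$$
being careful that all modules carry their $\mathbb{Z}_2$-grading and that the differentials are homogeneous of even degree. From the usual low-dimensional edge homomorphisms I would extract the five-term exact sequence of supermodules
$$H_2(F) \longrightarrow H_2(L) \longrightarrow H_1(R)_L \longrightarrow H_1(F) \longrightarrow H_1(L) \longrightarrow 0.$$

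Next I would identify the terms. Since $F$ is a free Lie superalgebra, the super Chevalley--Eilenberg complex provides a short projective resolution of the trivial module, giving $H_2(F)=0$ and $H_1(F)\cong F/F'$. The coinvariants $H_1(R)_L=(R/[R,R])_L$ equal $R/([R,R]+[F,R])$, which simplifies to $R/[F,R]$ because $[R,R]\subseteq [F,R]$. Substituting into the five-term sequence yields
$$0 \longrightarrow H_2(L) \longrightarrow R/[F,R] \xrightarrow{\;\iota\;} F/F' \longrightarrow L/L' \longrightarrow 0,$$
where $\iota$ is induced by $R\hookrightarrow F$. The kernel of $\iota$ is precisely $(F'\cap R)/[F,R]$, and by exactness $H_2(L)$ maps isomorphically onto this kernel, producing the claimed isomorphism $H_2(L)\cong F'\cap R/[F,R]$ as supermodules.

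The main technical obstacle will be verifying the spectral sequence with the correct Koszul sign conventions, together with the vanishing $H_2(F)=0$ for a free Lie superalgebra. In the Lie algebra case, the latter is immediate from the fact that the Chevalley--Eilenberg complex of a free Lie algebra is a resolution; in the super setting one must replace the exterior algebra on $F$ by the graded-symmetric construction (exterior on the even part tensored with symmetric on the odd part) and track signs accordingly. Once this machinery is in place, the extraction of the super Hopf formula is purely formal, which is exactly the path carried out in \cite{GKL2015}.
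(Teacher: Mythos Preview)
Your proposal is correct and follows the standard Hopf-formula argument via the five-term exact sequence of the Hochschild--Serre spectral sequence. Note, however, that the paper does not supply its own proof of this lemma: it is stated as a citation of \cite[Corollary 6.5]{GKL2015}, so there is no in-paper argument to compare against. Your outline matches the approach taken in that reference, and the potential pitfalls you flag (Koszul signs in the super Chevalley--Eilenberg complex and the vanishing $H_2(F)=0$ for free $F$) are exactly the points that require care in the super setting.
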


 Now we list some  useful results from \cite{Nayak2018}, on the bound of the dimension of multiplier of a finite dimensional Lie superalgebra.
 
 \begin{theorem}\label{th3.2} \cite[See Theorem 3.3]{Nayak2018}
 Let $L=L_{\overline{0}}\oplus L_{\overline{1}}$ be a Lie superalgebra of dimension $(m \mid n)$. Then
 \[\dim \mathcal{M}(L) \leq  \dfrac{1}{2}[(m+n)^{2}+(n-m)].\]
 \end{theorem}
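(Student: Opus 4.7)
The strategy is to use the Hopf-type identification $\mathcal{M}(L)\cong (F'\cap R)/[F,R]$ from Lemma~\ref{lem3.1} and reduce the question to bounding $\dim F'/[F,R]$. Since $[F,R]\subseteq F'\cap R\subseteq F'$, the multiplier is a graded subspace of $F'/[F,R]$, so any upper bound on the latter immediately gives one on $\dim \mathcal{M}(L)$.

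First I would fix a free presentation $0\to R\to F\to L\to 0$, choose a homogeneous basis $\{e_1,\dots,e_m\}$ of $L_{\bar 0}$ and $\{e_{m+1},\dots,e_{m+n}\}$ of $L_{\bar 1}$, and pick homogeneous lifts $\tilde e_i\in F$. Then I would verify that the classes $[\tilde e_i,\tilde e_j]+[F,R]$ span $F'/[F,R]$. Indeed, any $f\in F$ can be written as $f=\sum a_i\tilde e_i+r_f$ with $r_f\in R$, so for any $f,g\in F$,
\[
[f,g]=\sum_{i,j} a_i b_j\,[\tilde e_i,\tilde e_j]+\bigl[\textstyle\sum a_i\tilde e_i,r_g\bigr]+[r_f,\textstyle\sum b_j\tilde e_j]+[r_f,r_g],
\]
and the last three terms lie in $[F,R]$. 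Because $F'$ is linearly spanned by brackets $[f,g]$, the claim follows.

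Next I would use the graded skew-symmetry to count the number of independent such classes. For even--even indices $i,j\le m$ one has ordinary antisymmetry and $[\tilde e_i,\tilde e_i]=0$, giving at most $\binom{m}{2}$ classes; for mixed indices the relation $[\tilde e_i,\tilde e_j]=-[\tilde e_j,\tilde e_i]$ allows us to keep only pairs with the first index even, giving at most $mn$ classes; and for odd--odd indices $i,j>m$ the bracket is graded-\emph{symmetric}, $[\tilde e_i,\tilde e_j]=+[\tilde e_j,\tilde e_i]$, with the diagonal squares $[\tilde e_i,\tilde e_i]$ a priori nonzero, giving at most $\binom{n+1}{2}$ classes. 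Summing the three contributions,
\[
\dim \mathcal{M}(L)\le \dim F'/[F,R]\le \binom{m}{2}+mn+\binom{n+1}{2}=\tfrac{1}{2}\bigl[(m+n)^2+(n-m)\bigr],
\]
as desired.

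The main obstacle, and the one point where the super-version departs from the classical Lie algebra argument of Moneyhun, is the sign bookkeeping in the odd--odd sector: the graded skew-symmetry becomes true symmetry on $L_{\bar 1}\times L_{\bar 1}$, so the diagonal squares $[\tilde e_i,\tilde e_i]$ with $|\tilde e_i|=\bar 1$ survive and enlarge the generating set by $n$ extra classes. This is precisely the source of the asymmetric correction $+n$ (rather than $-n$) that produces the $(n-m)$ term in the stated bound; once this is correctly accounted for, the rest of the proof is the direct super-analogue of the Lie algebra case.
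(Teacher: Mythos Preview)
Your argument is correct. The paper itself does not supply a proof of this theorem; it is quoted from \cite{Nayak2018} without argument, so there is no in-paper proof to compare against line by line. That said, your approach is exactly the expected one and almost certainly coincides with the cited proof: it is the graded version of Moneyhun's classical bound, obtained by observing that $\mathcal{M}(L)\subseteq F'/[F,R]$ and that the latter is spanned by the classes $[\tilde e_i,\tilde e_j]$ of lifts of a homogeneous basis. Your justification of this spanning set (writing $f=\sum a_i\tilde e_i+r_f$ and expanding $[f,g]$) is sound, and the graded-symmetry count in the three sectors is correct, yielding $\binom{m}{2}+mn+\binom{n+1}{2}=\tfrac12[(m+n)^2+(n-m)]$. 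The one place a reader might pause---that $F'$ is genuinely spanned by single brackets $[f,g]$ rather than iterated ones---is fine, since $[F,F]$ is by definition the linear span of such brackets; you might make that explicit in a final write-up.
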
 
  We use this Theorem \ref{th3.2} to define the $corank~t(L)$ of a nilpotent Lie superalgebra $L$ of superdimesion $(m \mid n)$ by
 \[\dim \mathcal{M}(L) =  \dfrac{1}{2}[(m+n)^{2}+(n-m)]-t(L).\] 
 
\begin{theorem}\label{th3.3}\cite[See Theorem 3.4]{Nayak2018}

  \[\dim \mathcal{M}(A(m \mid n)) = \big(\frac{1}{2}(m^2+n^2+n-m)\mid mn \big).\]
\end{theorem}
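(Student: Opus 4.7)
The plan is to apply Lemma \ref{lem3.1} directly: choose a convenient free presentation $0 \to R \to F \to A \to 0$ of $A = A(m \mid n)$ and compute $F' \cap R / [F,R]$ as a superspace. Since $A$ is abelian, the kernel of the canonical surjection $F \to A$ contains every bracket, so $R = F'$ and the multiplier reduces to $F'/[F,F'] = F'/F''$, i.e.\ the length-two part of $F$ modulo longer commutators.

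I would fix a homogeneous basis $\{x_1,\dots,x_m\}$ of $A_{\bar 0}$ and $\{y_1,\dots,y_n\}$ of $A_{\bar 1}$ and take $F$ to be the free Lie superalgebra on these generators (with the prescribed parities). By a standard PBW/Hall-basis argument for free Lie superalgebras, $F'/F''$ is freely spanned by the basic length-two brackets $[x_i,x_j]$, $[x_i,y_k]$ and $[y_k,y_\ell]$ subject only to graded skew-symmetry. The key step is to count these carefully, keeping track of grading.

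For the even part, $[x_i,x_j]$ is even and graded skew-symmetry is ordinary antisymmetry, giving $\binom{m}{2}$ independent elements. The bracket $[y_k,y_\ell]$ is also even, but now graded skew-symmetry reads $[y_k,y_\ell] = -(-1)^{|y_k||y_\ell|}[y_\ell,y_k] = [y_\ell,y_k]$, which is symmetric and does not force $[y_k,y_k] = 0$; hence it contributes $\binom{n+1}{2}$ independent elements. Summing,
\[
\dim \mathcal{M}(A)_{\bar 0} \;=\; \tfrac{m(m-1)}{2} + \tfrac{n(n+1)}{2} \;=\; \tfrac{1}{2}(m^2+n^2+n-m).
\]
For the odd part, $[x_i,y_k]$ is odd and the remaining brackets $[y_k,x_i]$ are determined by skew-symmetry, giving exactly $mn$ independent elements, as required.

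The only subtle point is the symmetry flip in the odd-odd bracket: this is where the formula $\frac{n(n+1)}{2}$ (rather than $\frac{n(n-1)}{2}$) enters and is the one genuinely super feature of the computation. Everything else is a transcription of the classical abelian Lie algebra calculation $\mathcal{M}(A) \cong A \wedge A$ into the super setting, with $A \wedge_s A = \Lambda^2 A_{\bar 0} \oplus S^2 A_{\bar 1}$ in even degree and $A_{\bar 0} \otimes A_{\bar 1}$ in odd degree.
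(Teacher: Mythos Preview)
The paper does not prove this statement; it is cited from \cite{Nayak2018} as a known result. Your argument is correct and is the natural direct computation: since $A$ is abelian, the free presentation $F \to A$ has kernel $R = F'$, so $\mathcal{M}(A) \cong F'/[F,F']$, which is the degree-two homogeneous component of the free Lie superalgebra, and your enumeration of the independent length-two brackets under graded skew-symmetry gives exactly the claimed superdimension.

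One notational slip: you write $F'/[F,F'] = F'/F''$, but the second derived ideal $F'' = [F',F']$ sits in bracket-degree $\geq 4$, whereas $[F,F']$ sits in degree $\geq 3$, so these two quotients are not the same. What you actually need, and what you go on to compute, is $F'/[F,F'] \cong F_2$; the identification with $F'/F''$ should simply be deleted. This does not affect the final count.
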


\smallskip 

A finite dimensional Lie superalgebra $L$ is said to be Heisenberg Lie superalgebra if $Z(L)=L'$ and $\dim Z(L)=1$. According to the homogeneous generator of $Z(L)$ of a Heisenberg Lie superalgebra, we can split it to even and odd Heisenberg Lie superalgebra. A special Heisenberg Lie superalgebra is a Heisenberg Lie superalgebra with even center \cite{MC2011}.

\begin{theorem} \label{th3.4}\cite[See Theorem 4.2, 4.3]{Nayak2018}
Every special Heisenberg Lie superalgebra, with even center has dimension $(2m+ \mid n)$, is isomorphic to $H(m , n)=H_{\overline{0}}\oplus H_{\overline{1}}$, where
\[H_{\overline{0}}=<x_{1},\cdots,x_{2m},z \mid [x_{i},x_{m+i}]=z,\ i=1,\cdots,m>\]
and 
\[H_{\overline{1}}=<y_{1},\cdots,y_{m}\mid [y_{j}, y_{j}]=z,\  j=1,\cdots,m>.\]
Further
$$
\dim \mathcal{M}(H(m , n))=
\begin{cases}
 (2m^{2}-m+n(n+1)/2-1 \mid 2mn)\quad \mbox{if}\;m+n\geq 2\\
(0 \mid 0) \quad \mbox{if}\;m=0, n=1\\  
   (2 \mid 0)\quad \mbox{if}\;m=1, n=0.
\end{cases}
$$

\end{theorem}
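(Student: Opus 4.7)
The first assertion — that every special Heisenberg Lie superalgebra with even center is isomorphic to the stated $H(m,n)$ — is a classification of non-degenerate super-bilinear forms. Set $V = L/Z(L)$ and pick an even generator $z$ of $Z(L)$. Since $L' = Z(L)$ is one-dimensional, the bracket descends to a bilinear pairing $\beta \colon V \times V \to \mathbb{F} z \cong \mathbb{F}$. Graded skew-symmetry forces $\beta$ alternating on $V_{\bar 0}\times V_{\bar 0}$ and symmetric on $V_{\bar 1}\times V_{\bar 1}$, and because $[L_{\bar 0},L_{\bar 1}]\subseteq L_{\bar 1}$ while $Z(L) \subseteq L_{\bar 0}$, it vanishes on $V_{\bar 0}\times V_{\bar 1}$. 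Any $v \in V$ with $\beta(v,\cdot) = 0$ lifts into $Z(L)$, so both restrictions are non-degenerate. Hence $\dim V_{\bar 0}=2m$ is even, a symplectic basis produces the $x_i$--$x_{m+i}$ pairs, and diagonalising the symmetric form on $V_{\bar 1}$ (using $\operatorname{char}\mathbb{F}\neq 2$) yields the $y_j$'s with $[y_j,y_j]=z$. Lifting and renaming recovers the presentation of $H(m,n)$.

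For the multiplier, apply the five-term homology exact sequence to the central extension $0 \to Z(H(m,n)) \to H(m,n) \to A(2m,n) \to 0$. Since $H(m,n)' = Z(H(m,n))$, the map $Z(H(m,n)) \to H(m,n)/H(m,n)'$ is zero, so the sequence collapses to
\[ \mathcal{M}(H(m,n)) \longrightarrow \mathcal{M}(A(2m,n)) \xrightarrow{\;d\;} Z(H(m,n)) \longrightarrow 0, \]
where $d$ sends a class $\bar{a}\wedge \bar{b}$ to the Heisenberg bracket $[a,b]\in \mathbb{F} z$. Theorem~\ref{th3.3} gives $\dim \mathcal{M}(A(2m,n))=(2m^2-m+n(n+1)/2 \mid 2mn)$, and $d$ is surjective because some $x_i,x_{m+i}$ pair or some $y_j,y_j$ pair witnesses $z$. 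Removing the single even dimension absorbed by $d$ leaves exactly the superdimension recorded in the $m+n\geq 2$ branch.

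The delicate point is the injectivity of $\mathcal{M}(H(m,n)) \to \mathcal{M}(A(2m,n))$, which the 5-term sequence does not supply for free, and this is the main technical obstacle. I would close this gap through the Hopf-type formula of Lemma~\ref{lem3.1}: take a free presentation $0 \to R \to F \to H(m,n) \to 0$ on $2m$ even and $n$ odd generators; the ideal $R$ is generated at quadratic level by the vanishing brackets and the Heisenberg identifications, and a direct dimension count of $F'\cap R$ modulo $[F,R]$ — with careful sign tracking through graded Jacobi identities, as in the Lie-algebra treatment of \cite{PMF2013} — matches the counted kernel of $d$ coordinate by coordinate. Finally, the two boundary cases in the piecewise formula are confirmed by ad hoc arguments: $H(0,1)$ is the $(1\mid1)$-dimensional superalgebra whose free presentation directly yields $F'\cap R=[F,R]$, giving $\mathcal{M}=(0\mid0)$; while $H(1,0)$ is the classical three-dimensional Heisenberg Lie algebra, whose multiplier is well known to equal $(2\mid 0)$.
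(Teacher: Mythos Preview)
The paper does not prove Theorem~\ref{th3.4}; it quotes the result from \cite{Nayak2018} without argument, so there is no in-paper proof to compare your attempt against.

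On its own merits: your classification half is correct and is the standard argument --- the bracket modulo the one-dimensional even center is a non-degenerate form that is symplectic on $V_{\bar 0}$, symmetric on $V_{\bar 1}$, and vanishes on $V_{\bar 0}\times V_{\bar 1}$ because the center is even; normal forms then recover the presentation. One small caveat: diagonalising the symmetric part so that every $[y_j,y_j]$ equals the \emph{same} generator $z$ requires square roots in $\mathbb{F}$, so this step implicitly uses an algebraically closed (or at least quadratically closed) base field, which is stronger than the paper's standing hypothesis $\operatorname{char}\mathbb{F}\neq 2,3$.

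For the multiplier, your five-term-sequence reduction is right and correctly isolates the real work: the sequence gives only $\dim\mathcal{M}(H(m,n))\geq \dim\mathcal{M}(A(2m\mid n))-1$, and the injectivity of $\mathcal{M}(H(m,n))\to\mathcal{M}(A(2m\mid n))$ for $m+n\geq 2$ is the entire content of that case. Your proposed fix --- an explicit Hopf-formula count on a free presentation --- is the right route, but the phrase ``careful sign tracking \ldots\ matches coordinate by coordinate'' is a promissory note rather than a proof. Note too that by Theorem~\ref{th1} this injectivity is equivalent to $H(m,n)'\subseteq Z^{*}(H(m,n))$, i.e.\ to the non-capability of $H(m,n)$ for $m+n\geq 2$; since the present paper deduces that non-capability in Theorem~\ref{th5} \emph{from} the multiplier formula, you cannot borrow it here without circularity. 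The two boundary cases are fine: $H(0,1)$ is itself the free Lie superalgebra on one odd generator (graded Jacobi kills $[y,[y,y]]$ in characteristic $\neq 3$), so $R=0$ and $\mathcal{M}=0$; and $H(1,0)$ is the classical Heisenberg Lie algebra with its well-known $2$-dimensional multiplier.
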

Similarly the multiplier and cover for Heisenberg Lie superalgebra of odd center is known.

\begin{theorem}\label{th3.6}\cite[See Theorem 2.8]{SN2018b}
Every Heisenberg Lie superalgebra, with odd center has dimension $(m \mid m+1)$, is isomorphic to $H_{m}=H_{\overline{0}}\oplus H_{\overline{1}}$, where
\[H_{m}=<x_{1},\cdots,x_{m} , y_{1},\cdots,y_{m},z \mid [x_{j},y_{j}]=z,   j=1,\cdots,m>.\]
Further,
$$
\dim \mathcal{M}(H_{m})=
\begin{cases}
 (m^{2}\mid m^{2}-1)\quad \mbox{if}\;m\geq 2\\  
(1\mid 1) \quad \quad \mbox{if}\;m=1.  
  \end{cases}
$$
\end{theorem}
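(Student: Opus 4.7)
My plan is to prove the two assertions of Theorem \ref{th3.6} separately: first identify the isomorphism type of an arbitrary Heisenberg Lie superalgebra with odd centre, then compute the multiplier by reducing to the abelian case already handled in Theorem \ref{th3.3}.

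\emph{Structure of $H_m$.} Let $L=L_{\bar 0}\oplus L_{\bar 1}$ be Heisenberg with $Z(L)=L'=\mathbb F z$ and $|z|=\bar 1$. Since $L'\subseteq L_{\bar 1}$, the $\mathbb Z_2$-grading immediately forces $[L_{\bar 0},L_{\bar 0}]\subseteq L_{\bar 0}\cap\mathbb F z=0$ and $[L_{\bar 1},L_{\bar 1}]\subseteq L_{\bar 0}\cap\mathbb F z=0$, so only the pairing $\beta:L_{\bar 0}\times L_{\bar 1}\to\mathbb F$ defined by $[u,v]=\beta(u,v)z$ can be non-trivial. I would then verify that its left radical equals $L_{\bar 0}\cap Z(L)=0$ and its right radical equals $L_{\bar 1}\cap Z(L)=\mathbb F z$, so the induced map $L_{\bar 0}\times(L_{\bar 1}/\mathbb F z)\to\mathbb F$ is a perfect pairing. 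This forces $\dim L_{\bar 1}=\dim L_{\bar 0}+1$; setting $\dim L_{\bar 0}=m$ and picking dual bases $\{x_i\}\subseteq L_{\bar 0}$ and $\{y_j\bmod \mathbb F z\}\subseteq L_{\bar 1}/\mathbb F z$ with $\beta(x_i,y_j)=\delta_{ij}$ yields the stated presentation of $H_m$.

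\emph{Multiplier for $m\ge 2$.} I would apply the Hochschild--Serre five-term exact sequence to the central extension
\[ 0\longrightarrow \mathbb F z\longrightarrow H_m\longrightarrow A(m\mid m)\longrightarrow 0, \]
obtaining
\[ \mathcal M(H_m)\longrightarrow \mathcal M(A(m\mid m))\stackrel{d_2}{\longrightarrow}\mathbb F z\longrightarrow 0. \]
By Theorem \ref{th3.3}, $\dim\mathcal M(A(m\mid m))=(m^2\mid m^2)$; under the standard identification with the super exterior square of $V=A(m\mid m)$ it decomposes as $\Lambda^2V_{\bar 0}\oplus(V_{\bar 0}\otimes V_{\bar 1})\oplus S^2V_{\bar 1}$. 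The bracket-induced differential $d_2$ annihilates the two outer (even) pieces and sends $x_i\otimes y_j\mapsto \delta_{ij}z$ on the middle (odd) piece, so $\ker d_2$ has superdimension $(m^2\mid m^2-1)$. To realise this kernel as $\mathcal M(H_m)$ I would construct an explicit stem cover: adjoin central formal symbols $x_i\wedge x_j$ ($i<j$), $y_i\cdot y_j$ ($i\le j$), and $x_i\otimes y_j-\delta_{ij}(x_1\otimes y_1)$, extend the bracket of $H_m$ in the obvious way, and verify graded Jacobi and dimension.

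\emph{The case $m=1$ and main obstacle.} The five-term kernel has dimension only $(1\mid 0)$ when $m=1$, whereas the theorem asserts $(1\mid 1)$, so an additional odd class must arise. Note $\mathcal M(\mathbb F z)=(1\mid 0)$ by Theorem \ref{th3.3}, and $V\otimes \mathbb F z$ contributes an odd $(0\mid 1)$ piece; the missing class must survive in $E_\infty^{1,1}$ or $E_\infty^{0,2}$ of the full Lyndon--Hochschild--Serre spectral sequence. The cleanest route is to handle this case directly via the Hopf formula with the free presentation on generators $X_1$ (even) and $Y_1$ (odd), computing $R$ and $[F,R]$ through degree three and locating the surviving odd class coming from a commutator such as $[Y_1,[X_1,Y_1]]$. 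The main technical obstacle is twofold: first, proving injectivity of the five-term map $\mathcal M(H_m)\to\ker d_2$ for $m\ge 2$ (ruling out analogous low-degree anomalies), and second, carrying out the Hopf-formula bookkeeping for $m=1$ to pin down the exact extra odd generator.
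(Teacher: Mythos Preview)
The paper does not give a proof of Theorem~\ref{th3.6}; it is quoted from \cite{SN2018b} as a preliminary, so there is no argument in the present paper to compare against. (The special case $m=1$ is, however, re-derived after Theorem~\ref{th1} by an explicit Hopf-formula computation on the free Lie superalgebra on one even and one odd generator, exactly along the lines you propose for that case.)

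Your structural argument is correct. For the multiplier when $m\ge 2$ there is a genuine gap in the plan. The exact sequence of Proposition~\ref{prop6}(3) reads
\[
N\otimes H_m/H_m'\;\longrightarrow\;\mathcal M(H_m)\;\longrightarrow\;\mathcal M(A(m\mid m))\;\stackrel{d_2}{\longrightarrow}\;\mathbb F z\;\longrightarrow\;0,
\]
so $\dim\mathcal M(H_m)=\dim\ker d_2+\dim\bigl(\mathrm{im}(N\otimes H_m/H_m')\bigr)$. Constructing a stem \emph{extension} with kernel of superdimension $(m^{2}\mid m^{2}-1)$ only recovers the lower bound already supplied by $\ker d_2$; it does not show your extension is a \emph{cover}, and ``verifying graded Jacobi and dimension'' does not certify maximality. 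The real content for $m\ge 2$ is the vanishing of the leftmost map, equivalently $[S,F]=[R,F]$ in the Hopf presentation with $S/R=\mathbb F z$.

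This vanishing can be proved directly and is where the hypothesis $m\ge 2$ enters. Lift $z$ to $[X_1,Y_1]$. For any $j$ pick $i\neq j$ (possible since $m\ge 2$); from $[X_1,Y_1]-[X_i,Y_i]\in R$ one gets $[[X_1,Y_1],X_j]\equiv[[X_i,Y_i],X_j]\pmod{[R,F]}$, and graded Jacobi rewrites the latter as $[X_i,[Y_i,X_j]]-[Y_i,[X_i,X_j]]$, both terms lying in $[F,R]$ because $[Y_i,X_j],[X_i,X_j]\in R$. The same trick handles $[[X_1,Y_1],Y_j]$. Hence $[S,F]\subseteq[R,F]$ and the five-term map is injective, giving $\dim\mathcal M(H_m)=(m^{2}\mid m^{2}-1)$. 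For $m=1$ no such $i$ exists; the even element $z\otimes y_1$ still dies (graded Jacobi with $[Y_1,Y_1]\in R$), but the odd element $z\otimes x_1$ survives as $[X_1,[X_1,Y_1]]$, producing exactly the extra $(0\mid 1)$ you were looking for. Your proposed spectral-sequence or $E_\infty^{1,1}$ detour is unnecessary once this is seen.
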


For any two Lie superalgebras $H$ and $K$ the Lie superalgebra direct sum $H \oplus K$ is a Lie superalgebra. If $\mathcal{M}(H)$ and $\mathcal{M}(K)$ is known then $\mathcal{M}(H \oplus K)$ is given by the following result.

\begin{theorem}\label{th3.7}\cite[See Theorem 3.9]{Nayak2018}
For Lie superalgebras $H$ and $K$,
\[\mathcal{M}(H\oplus K)\cong \mathcal{M}(H)\oplus \mathcal{M}(K)\oplus (H/H'\otimes K/K').\]
\end{theorem}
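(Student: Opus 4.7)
The plan is to use the Hopf-type identification $\mathcal{M}(L) \cong (F' \cap R)/[F,R]$ from Lemma \ref{lem3.1}, applied to a carefully chosen free presentation of $L := H \oplus K$, and to split both the numerator and the denominator into three pieces corresponding to $H$, $K$, and cross terms.

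First I would take free presentations $0 \to R_H \to F_H \to H \to 0$ and $0 \to R_K \to F_K \to K \to 0$ and form the free product $F := F_H * F_K$ in the category of Lie superalgebras. Since this free product is itself free on the disjoint union of the two generating sets, $F$ qualifies as a free Lie superalgebra. Letting $R$ denote the ideal of $F$ generated by $R_H \cup R_K \cup [F_H, F_K]$, the quotient $F/R$ is exactly $H \oplus K$, because modding out by $[F_H, F_K]$ forces the images of the two factors to supercommute. Using the natural bi-grading of $F$ (by the number of letters from each factor appearing in a bracket monomial), one verifies the direct-sum decompositions
\begin{align*}
F' \cap R &= (F_H' \cap R_H) \oplus (F_K' \cap R_K) \oplus [F_H, F_K], \\
[F, R] &= [F_H, R_H] \oplus [F_K, R_K] \oplus \bigl([F_H', F_K] + [F_H, F_K']\bigr).
\end{align*}
Quotienting piece by piece and applying Lemma \ref{lem3.1} to $H$ and $K$ individually delivers the first two summands. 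The remaining summand $[F_H, F_K]/\bigl([F_H', F_K] + [F_H, F_K']\bigr)$ is then identified with $H/H' \otimes K/K'$ via the well-defined bilinear super-pairing $(x + F_H') \otimes (y + F_K') \mapsto [x,y]$, whose inverse comes from the universal property of the tensor product together with the biadditivity of the Lie bracket on cosets modulo the derived subalgebras.

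The main obstacle will be justifying the two graded direct-sum decompositions above: checking that the intersection $F' \cap R$ distributes cleanly across the three defining pieces of $R$, and that $[F,R]$ splits without spurious cross-overlaps such as $[F_K, R_H]$ failing to be absorbed into $[F_H, F_K']$-type terms. Both of these rest on the super-Jacobi identity controlling iterated commutators in a free product, together with a careful accounting of Koszul signs when odd elements are permuted between the two factors. Once these structural claims are verified, the three quotients assemble into the stated isomorphism.
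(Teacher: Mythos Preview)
The paper does not prove Theorem~\ref{th3.7}; it is cited from \cite{Nayak2018} without argument. (An alternative route, internal to this paper, would be to combine Theorem~\ref{th3} with Lemma~\ref{lemma4}: since the cross summand $H/H'\otimes K/K'$ of $(H\oplus K)\wedge(H\oplus K)$ maps to zero under $(H\oplus K)\wedge(H\oplus K)\to (H\oplus K)'=H'\oplus K'$, the kernel decomposes as $\mathcal{M}(H)\oplus\mathcal{M}(K)\oplus(H/H'\otimes K/K')$.)

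Your Hopf-formula approach is the classical one and is salvageable, but the two displayed decompositions are wrong as written and lead to the wrong cross term. First, the mixed component of $F'\cap R$ is the entire ideal $M$ generated by $[F_H,F_K]$ (all bracket monomials using letters from both factors), not just the subspace $[F_H,F_K]$: for instance $[[x,y],[x',y']]\in F'\cap R$ but $\notin[F_H,F_K]$. Likewise the mixed component of $[F,R]$ is $[R_H,F_K]+[F_H,R_K]+[F,M]$, strictly larger than $[F_H',F_K]+[F_H,F_K']$. Second, and fatally for the conclusion, the quotient $[F_H,F_K]/\bigl([F_H',F_K]+[F_H,F_K']\bigr)$ is $(F_H/F_H')\otimes(F_K/F_K')$, the tensor of abelianizations of the \emph{free} superalgebras, not $H/H'\otimes K/K'$: your pairing $(x+F_H')\otimes(y+F_K')\mapsto[x,y]$ never sees $R_H$ or $R_K$, and replacing $x$ by $x+r$ with $r\in R_H$ changes $[x,y]$ by $[r,y]$, which your denominator does not absorb. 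The repair is to take the mixed piece of $[F,R]$ to be $[R_H,F_K]+[F_H,R_K]+[F,M]$ and to work with $M$ in the numerator; since $M=[F_H,F_K]+[F,M]$, the resulting quotient is generated by the image of $[F_H,F_K]$, and the bracket pairing then factors through $F_H/(F_H'+R_H)\otimes F_K/(F_K'+R_K)=H/H'\otimes K/K'$ as required.
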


\section{ Non-abelian tensor products and exterior-products of Lie superalgebras}

Recently, Garc\'{i}a-Mart\'{i}nez \cite{GKL2015} introduced the notion of non-abelian tensor product of Lie superalgebras and exterior product of Lie superalgebras. Here we recall some of the known notations and results from \cite{GKL2015}. Then we prove that distributive law holds for non-abelian tensor product of Lie superalgebras. Further it is shown that the result is true for non-abelian exterior squares.
\subsection{Non-abelian tensor product}

 Let $P$ and $M$ be two Lie superalgebras, then by an action of $P$ on $M$ we mean a $\mathbb{K}$-bilinear map of even grade, \[P\times M \longrightarrow M,~~~~~~~(p, m)\mapsto {}^ pm, \]
such that 
\begin{enumerate}
\item ${}^{[p , p']} m = {}^p({}^{p'}m)-(-1)^{|p||p'|}~  {}^{p'}({}^{p}m),$

\item ${}^p{[m , m']}=[{}^pm, m']+(-1)^{|p||m|}[m , {}^pm'],$

\end{enumerate}
for all homogeneous $p, p' \in P$ and $m, m' \in M$. For any Lie superalgebra $M$, the Lie multiplication induces an action on itself via ${}^mm'=[m , m']$.  The action of $P$ on $M$ is called trivial if ${}^pm=0$ for all $p \in P$ and $m \in M$. \\
 Given two Lie superalgebras $M$ and $P$ with action of $P$ on $M$, we define the semidirect product $M \rtimes P$ with underlying supermodule $M \oplus P$ endowed with the bracket given by
$[(m, p), (m', p')]=([m, m']+ {}^pm'-(-1)^{|m||p'|}({}^{p'}m), [p, p'])$.\\
A crossed module of Lie superalgebras is a homomorphism of Lie superalgebras $\partial : M\longrightarrow P$ with an action of $P$ on $M$ satisfying 
\begin{enumerate}
\item $\partial ({}^p{m})=[p,\partial(m)],$

\item ${}^{\partial(m)}{m'}=[m, m'],$
\end{enumerate} 
for all $p \in P$ and $m, m' \in M$.

Let $M$ and $N$ be two Lie superalgebras with actions on each other. Let $X_{M , N}$ be the $\mathbb{Z}_{2}$-graded set of all symbols $m\otimes n$, where $m \in M_{\overline{0}}\cup  M_{\overline{1}}$, $n \in N_{\overline{0}}\cup  N_{\overline{1}}$ and the $\mathbb{Z}_{2}$-gradation is given by $|m\otimes n|=|m|+|n|$. The non-abelian tensor product of $M$ and $N$, denoted by $M \otimes N$, as the Lie superalgebra generated by $X_{M , N}$ and subject to the relations:
\begin{enumerate} 
\item $\lambda (m \otimes n)=\lambda m \otimes n= m \otimes \lambda n$,
\item $(m + m')\otimes n= m\otimes n +m'\otimes n$,  where $m , m'$ have the same grade,\\
      $m \otimes (n + n')=m \otimes n + m \otimes n'$,  where $n , n'$ have the same grade,
      
\item $[m , m']\otimes n= (m\otimes {}^ {m'}{n}) -(-1)^{|m||m'|}(m'\otimes {}^{m}{n})$,\\
      $m\otimes [n,n']=(-1)^{|n'|(|m|+|n|)}({}^{n'}{m}\otimes n)-(-1)^{|m||n|}({}^{n}{m}\otimes n')$,

\item $[m\otimes n,m'\otimes n']=-(-1)^{|m||n|}({}^{n}{m} \otimes {}^{m'}{n'}),$      
\end{enumerate}
for every $\lambda \in \mathbb{K}, m, m' \in M_{\overline{0}}\cup  M_{\overline{1}}$ and $n , n' \in N_{\overline{0}}\cup  N_{\overline{1}}$. If $M=M_{\overline{0}}$ and $N=N_{\overline{0}}$ then $M \otimes N$ is the non-abelian tensor product of Lie algebras introduced and studied \cite{Ellis1991}.

 Actions of Lie superalgebras $M$ and $N$ on each other are said to be compatible if 
\begin{enumerate}
\item ${}^{({}^{n}{m})}{n'}=-(-1)^{|m||n|}[{}^{m}{n},n']$,
\item ${}^{({}^{m}{n})}{m'}=-(-1)^{|m||n|}[{}^{n}{m},m']$,
\end{enumerate}
for all $m, m' \in M_{\overline{0}}\cup  M_{\overline{1}}$ and $n, n' \in N_{\overline{0}}\cup  N_{\overline{1}}$.
For instance if $M$, $N$ are two graded ideals of a Lie superalgebra then the actions induced by the bracket are compatible.

\smallskip

 Suppose $M$ and $N$ are Lie superalgebras acting compatibly with each other. There are two Lie superalgebra homomorphisms $\mu:M \otimes N \longrightarrow M$ and $ \nu: M \otimes N \longrightarrow N$ \cite{GKL2015}. As a result of this $[M, N]^{N}$  or $[N, M]^{N}$ is the submodule generated by ${}^ mn$ and by the compatibility condition is a graded ideal of $N$. Further with some given actions of both $M$ and $N$ on $M \otimes N$,  the homomorphisms $\mu, \nu$ are crossed modules.
 
\smallskip

We consider all actions from now on are compatible and we have the following well known results \cite{GKL2015}.
\begin{proposition}\label{prop1}\cite[Proposition 3.5]{GKL2015}
Let $M$ and $N$ be Lie superalgebras acting on each other. Then the canonical map $M\otimes _{\mathbb{K}}N \rightarrow M\otimes N,\; m\otimes n \mapsto m \otimes n$, is an even, surjective homomorphism of supermodules.
\end{proposition}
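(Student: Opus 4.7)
The plan is to construct $\phi$ via the universal property of the ordinary supermodule tensor product $M\otimes_{\mathbb{K}}N$, and then to observe that surjectivity at the supermodule level follows from the bracket relation $(4)$ in the presentation of $M\otimes N$.

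First, to obtain $\phi$, I would view the assignment $(m,n)\mapsto m\otimes n\in M\otimes N$ on homogeneous elements and note that relations $(1)$ and $(2)$ in the presentation of $M\otimes N$ say exactly that this assignment is $\mathbb{K}$-bilinear (scalar compatibility, plus additivity in each argument on fixed grades). By the universal property of the $\mathbb{Z}_{2}$-graded tensor product over $\mathbb{K}$, the assignment therefore factors through a unique $\mathbb{K}$-linear map
\[
\phi : M\otimes_{\mathbb{K}}N\longrightarrow M\otimes N,\qquad m\otimes n\longmapsto m\otimes n.
\]
Both the source and the target are graded by $|m\otimes n|=|m|+|n|$, so $\phi$ sends a homogeneous elementary tensor of degree $\sigma$ into the $\sigma$-component of $M\otimes N$; that is, $\phi$ is a supermodule homomorphism of degree $\bar{0}$.

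Next I would verify surjectivity. By construction $M\otimes N$ is generated as a Lie superalgebra by the symbols $m\otimes n$. Relation $(4)$,
\[
[m\otimes n,\; m'\otimes n']=-(-1)^{|m||n|}\bigl({}^{n}m\otimes {}^{m'}n'\bigr),
\]
shows that every bracket of two elementary symbols is itself, up to sign, a single elementary symbol. Iterating this and inducting on the length of a bracketed word, every element of $M\otimes N$ can be written as a $\mathbb{K}$-linear combination of elementary symbols $m\otimes n$. Each such symbol lies in $\Ima(\phi)$, so $\phi$ is surjective.

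The main obstacle, such as it is, is the observation that relation $(4)$ makes the Lie superalgebra structure ``absorb back'' into the $\mathbb{K}$-span of the elementary tensors; this is what upgrades surjectivity from a statement about Lie superalgebra generation to the stronger statement about supermodule generation required here. Well-definedness (from relations $(1)$--$(2)$) and evenness (from the grading convention $|m\otimes n|=|m|+|n|$) then follow directly from the presentation.
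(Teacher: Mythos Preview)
Your argument is correct. The paper itself does not supply a proof of this proposition; it is quoted verbatim from \cite[Proposition~3.5]{GKL2015}, so there is no in-paper proof to compare against. Your approach---factoring through the universal property of $M\otimes_{\mathbb{K}}N$ using relations $(1)$ and $(2)$, reading off evenness from the grading convention, and then observing via relation $(4)$ that the Lie bracket of elementary tensors is again (a scalar multiple of) an elementary tensor, so that the $\mathbb{K}$-span of elementary tensors is already closed under the bracket---is exactly the standard argument and is what one finds in the cited source.
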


Further the result below tells us when the surjective homomorphism in Proposition \ref{prop1} is an isomorphism.

\begin{proposition}\label{prop2}\cite[Proposition 3.5]{GKL2015}
 If the Lie superalgebras $M$ and $N$ act trivially on each other, then $M\otimes N$ is an abelian Lie superalgebra and there is an isomorphism of supermodules 
 \[M\otimes N \cong M^{ab}\otimes _{\mathbb{K}}N^{ab},\]
where $M^{ab}=M/[M, M]$ and $N^{ab}=N/[N, N]$. 
\end{proposition}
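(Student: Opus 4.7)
The plan is to exploit how dramatically the defining relations (3) and (4) of the non-abelian tensor product collapse when $M$ and $N$ act trivially on each other, reducing everything to essentially the ordinary tensor product of the abelianizations. The strategy is to prove abelianness first, then construct mutually inverse supermodule maps between $M \otimes N$ and $M^{ab} \otimes_{\mathbb{K}} N^{ab}$.

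First I would verify that $M \otimes N$ is an abelian Lie superalgebra. Under trivial actions ${}^{n} m = 0$ and ${}^{m'} n' = 0$, so relation (4) yields
\[ [m \otimes n, m' \otimes n'] = -(-1)^{|m||n|} ({}^{n} m \otimes {}^{m'} n') = 0 \]
on every pair of generators $m \otimes n$, $m' \otimes n'$. Since the bracket vanishes on a generating set, $M \otimes N$ is abelian.

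Next I would construct the surjection $\bar{\pi}: M^{ab} \otimes_{\mathbb{K}} N^{ab} \twoheadrightarrow M \otimes N$. Under trivial actions, relation (3) collapses to $[m, m'] \otimes n = 0$ and $m \otimes [n, n'] = 0$, so the canonical even surjection $M \otimes_{\mathbb{K}} N \twoheadrightarrow M \otimes N$ of Proposition \ref{prop1} annihilates $[M, M] \otimes_{\mathbb{K}} N$ and $M \otimes_{\mathbb{K}} [N, N]$. Hence it factors through $M^{ab} \otimes_{\mathbb{K}} N^{ab}$, producing the desired surjective supermodule homomorphism $\bar{\pi}$.

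In the other direction, I would define $\phi: M \otimes N \to M^{ab} \otimes_{\mathbb{K}} N^{ab}$ on generators by $\phi(m \otimes n) = \bar{m} \otimes \bar{n}$. To verify well-definedness, relations (1) and (2) follow from bilinearity and grading of the ordinary tensor product. Relation (3) is respected because $\overline{[m, m']} \otimes \bar{n} = 0$ in the target, while the images of the terms on the right-hand side of (3) also vanish thanks to triviality of the actions (${}^{m'} n$ and ${}^{m} n$ are $0$). Relation (4) is vacuous since the target is abelian. A direct check on generators then shows $\bar{\pi}$ and $\phi$ are mutually inverse, yielding the isomorphism of supermodules. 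The only step that looks delicate is the relation check for $\phi$, but triviality of the actions makes all the twisted terms vanish on the nose, so no real obstacle arises.
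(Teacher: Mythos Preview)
The paper does not supply its own proof of this proposition; it is quoted from \cite{GKL2015} as a preliminary result and stated without argument. Your approach is correct and is essentially the standard proof: the collapse of relations (3) and (4) under trivial actions is exactly what reduces the non-abelian tensor product to the ordinary tensor product of the abelianizations.

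One small point worth tightening is the sentence ``Since the bracket vanishes on a generating set, $M \otimes N$ is abelian.'' The symbols $m \otimes n$ generate $M \otimes N$ as a \emph{Lie superalgebra}, not a priori as a vector space, so vanishing of brackets of generators is not by itself sufficient. The missing observation is that relation (4) already expresses every bracket of two generators as (a scalar multiple of) a single generator; hence the linear span of the $m \otimes n$ is closed under the bracket and therefore coincides with all of $M \otimes N$. Once that is noted, abelianness follows and the rest of your construction of the mutually inverse maps $\bar\pi$ and $\phi$ goes through exactly as written.
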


A Lie superalgebra tensor product is not associative but it is symmetric.

\begin{proposition}\label{prop3}\cite[Proposition 3.7]{GKL2015}
The Lie superalgebra homomorphism
$$ M \otimes N \longrightarrow N \otimes M, \quad m\otimes n \mapsto -(-1)^{|m||n|}(n \otimes m),$$ is an isomorphism.
\end{proposition}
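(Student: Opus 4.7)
The plan is to exhibit a two-sided inverse on the level of generators and verify that both candidate maps are well-defined by checking they respect each of the four defining relations of the non-abelian tensor product. Concretely, define $\phi: M\otimes N \to N\otimes M$ on the $\mathbb{Z}_{2}$-graded generating set $X_{M,N}$ by $\phi(m\otimes n) = -(-1)^{|m||n|}(n\otimes m)$, and symmetrically define $\psi: N\otimes M \to M\otimes N$ by $\psi(n\otimes m) = -(-1)^{|n||m|}(m\otimes n)$. Both are even, since $|\phi(m\otimes n)| = |n|+|m| = |m\otimes n|$.

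The main task is to show that $\phi$ descends to a Lie superalgebra homomorphism, i.e. that it sends each defining relation of $M\otimes N$ to a consequence of the defining relations of $N\otimes M$. Relations (1) and (2) (scalar and bilinear linearity) are immediate. Relation (3) is handled by expanding $\phi([m,m']\otimes n) = -(-1)^{(|m|+|m'|)|n|}(n\otimes [m,m'])$, applying relation (3) of $N\otimes M$ to the right-hand side, and rearranging using the compatibility conditions on the actions of $M$ and $N$ on each other; the analogous identity holds for $\phi(m\otimes [n,n'])$. Relation (4) is the key identity: starting from $\phi[m\otimes n, m'\otimes n'] = (-1)^{|m||n|+|m'||n'|}[n\otimes m, n'\otimes m']$ and applying relation (4) in $N\otimes M$, one arrives at a multiple of $({}^{m}n\otimes {}^{n'}m')$. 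Using the graded antisymmetry of the bracket in $N\otimes M$ together with relation (4) applied the other way, one rewrites this as a multiple of $({}^{m'}n'\otimes {}^{n}m)$; the cross-exponent $(|m|+|n|)(|m'|+|n'|)$ appears twice and cancels modulo $2$, yielding exactly $-(-1)^{|m||n|}\phi({}^{n}m\otimes {}^{m'}n')$ as required.

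The same verification for $\psi$ is identical in form by the symmetry $m \leftrightarrow n$. Finally, composition on generators gives $(\psi\circ\phi)(m\otimes n) = (-1)^{2|m||n|}(m\otimes n) = m\otimes n$ and similarly $(\phi\circ\psi)(n\otimes m) = n\otimes m$, so $\phi$ and $\psi$ are mutually inverse even Lie superalgebra isomorphisms. The main obstacle will be the careful tracking of Koszul signs in the verification of relations (3) and (4), where the compatibility condition on the actions is indispensable in order for the signs to collapse to the required form; every other step of the argument is formal.
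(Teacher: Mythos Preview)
The paper does not supply its own proof of this proposition: it is quoted verbatim as \cite[Proposition 3.7]{GKL2015} and left unproved. Your approach---defining the candidate inverse on generators and verifying that it respects each of the four defining relations of the non-abelian tensor product---is the standard argument for this kind of result and is correct in outline; it is also presumably the argument in \cite{GKL2015}.

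One small inaccuracy: you assert that the compatibility condition on the mutual actions is ``indispensable'' in the verification of relation (3). In fact relation (3) goes through using only relation (3) on the target side. For instance, applying $\phi$ to $[m,m']\otimes n$ gives $-(-1)^{(|m|+|m'|)|n|}(n\otimes [m,m'])$, and expanding $n\otimes [m,m']$ via relation (3) in $N\otimes M$ already produces exactly $\phi(m\otimes {}^{m'}n)-(-1)^{|m||m'|}\phi(m'\otimes {}^{m}n)$ after collecting signs; no appeal to compatibility is required. Compatibility is part of the standing hypotheses in the paper, so this does not affect correctness, but your proof sketch slightly mislocates where the nontrivial sign bookkeeping lies: the only place any real work happens is relation (4), and even there it is handled purely by graded skew-symmetry of the bracket together with relation (4) in $N\otimes M$, as you describe.
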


Let us consider the category $SLie^{2}_{\mathbb{F}}$ \cite{GKL2015}. 

\begin{proposition}\label{prop4}\cite[Proposition 3.8]{GKL2015}
Given an exact sequence of Lie superalgebras
$$(0, 0) \longrightarrow (K,L) \overset{(i, j)} \longrightarrow (M, N) \overset{(\phi, \psi)} \longrightarrow (P, Q) \longrightarrow (0, 0)$$ there is an exact sequence of Lie superalgebras
$$
(K \otimes M) \rtimes (M \otimes L) \overset{\alpha} \longrightarrow M \otimes N \overset{\phi \otimes \psi} \longrightarrow P \otimes Q \longrightarrow 0.$$
\end{proposition}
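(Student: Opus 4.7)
My plan is to construct the connecting homomorphism $\alpha$, verify two routine inclusions, and then spend the bulk of the work on the nontrivial inclusion $\ker(\phi\otimes\psi)\subseteq\Ima(\alpha)$ by building an explicit inverse to the induced map on the quotient.

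First I would define $\alpha$ by piecing together the two natural maps $k\otimes n\mapsto i(k)\otimes n$ and $m\otimes l\mapsto m\otimes j(l)$ from the two tensor-product summands into $M\otimes N$. Because the compatible actions of $M$ and $N$ on each other restrict to compatible actions on the graded ideals $K$ and $L$, these two homomorphisms assemble into a single homomorphism $\alpha$ from the semidirect product into $M\otimes N$; the crossed-module structure of $\mu$ and $\nu$ ensures that the action appearing in the semidirect product is carried to the bracket in $M\otimes N$.

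The two routine checks are then: (i) $\phi\otimes\psi$ is surjective, since every generator $p\otimes q\in P\otimes Q$ is hit by any $m\otimes n$ with $\phi(m)=p$ and $\psi(n)=q$; and (ii) $\Ima(\alpha)\subseteq\ker(\phi\otimes\psi)$, since $(\phi\otimes\psi)(i(k)\otimes n)=(\phi\circ i)(k)\otimes\psi(n)=0$ and analogously on the other summand, using $\phi\circ i=0$ and $\psi\circ j=0$.

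The main obstacle is the reverse inclusion, which I would establish by constructing an inverse $\beta\colon P\otimes Q\to(M\otimes N)/\Ima(\alpha)$ of the induced map $\overline{\phi\otimes\psi}$. On generators I would set $\beta(p\otimes q)=[m\otimes n]$ for any lifts with $\phi(m)=p$ and $\psi(n)=q$; independence of the lifts follows from $(m-m')\otimes n=i(m-m')\otimes n\in\Ima(\alpha)$ whenever $m-m'\in K$, and similarly in the second slot. The substantive work is then to check that $\beta$ respects all four defining relations of the non-abelian tensor product modulo $\Ima(\alpha)$; the most delicate relation is the one coupling brackets with actions, and its verification relies on the fact that $K$ and $L$ are graded ideals, so that the induced actions of $P$ on $Q$ and of $Q$ on $P$ descend coherently and the graded signs in the relations match on both sides. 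Once $\beta$ is shown to be a well-defined homomorphism, verifying on generators that $\beta$ and $\overline{\phi\otimes\psi}$ are mutually inverse closes the argument.
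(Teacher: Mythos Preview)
The paper does not give its own proof of this proposition; it is quoted from \cite[Proposition 3.8]{GKL2015} and used as a black box, so there is no argument in the present paper to compare yours against. Your outline is the standard right-exactness argument for non-abelian tensor products (build $\alpha$, verify the two easy inclusions, then construct an inverse $\beta$ on the quotient to obtain the hard inclusion), and it is essentially the approach of the cited source.

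One point does need attention. The domain in the displayed statement is $(K\otimes M)\rtimes(M\otimes L)$, yet you define $\alpha$ on generators $k\otimes n$ and $m\otimes l$, tacitly replacing the first summand by $K\otimes N$. There is no natural map $K\otimes M\to M\otimes N$ of the required shape, since a generator $k\otimes m$ has second coordinate in $M$, not in $N$; the printed $K\otimes M$ appears to be a slip (note that in the special case recorded immediately afterwards one has $M=N$, so the distinction is invisible there). Your map $k\otimes n\mapsto i(k)\otimes n$ is the right one and matches \cite{GKL2015}, but you should state explicitly that you are correcting the first summand to $K\otimes N$, and record why $K$ and $N$ inherit compatible actions on each other (namely, that the morphism $(i,j)$ in $\mathrm{SLie}^2$ forces $N$'s action on $M$ to preserve $i(K)$ and $M$'s action on $N$ to preserve $j(L)$). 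With that clarified, the rest of your plan goes through; in particular the verification that $\beta$ respects relation~(3) of the tensor product uses exactly this preservation of $K$ and $L$ under the crossed actions.
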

Spcifically given a Lie superalgebra $M$ and a graded ideal $K$ of $M$ there is an exact sequence 
\begin{equation}\label{eq1}
(K \otimes M) \rtimes (M \otimes K) \longrightarrow M \otimes M \longrightarrow (M/K) \otimes (M/K) \longrightarrow 0.
\end{equation}

\smallskip

Below we are going to prove a lemma on non-abelian tensor product of Lie superalgebras. This leads to proof a property that distributive law holds for the non-abelian tensor product of Lie superalgebras over certain direct sums. 

\begin{lemma}\label{lemma0'}
Suppose that $L, M$ and $N$ are Lie superalgebras such that
\begin{enumerate}[(i)]
\item $L$, $N$ act compatibly on one another and $M$, $N$ act compatibly on one another;
\item ${}^l({}^mn)=(-1)^{|l||m|} ({}^m({}^ln))$, for all homogeneous elements $l \in L$ and $m \in M $, $n \in N$;
\item the canonical homomorphisms $[L , N]^{N}\otimes M \longrightarrow N \otimes M$ and $[M , N]^{N}\otimes L \longrightarrow N \otimes L$ are trivial.
\end{enumerate}
Then the Lie superalgebra direct sum $L\oplus M$ acts on $N$ by ${}^{(l , m)}n= {}^ln+{}^ mn$ and $N$ acts on $L\oplus M$ by ${}^n{(l , m)}=({}^nl, {}^nm)$ . With these actions there is an isomorphism \[(L\oplus M)\otimes N \cong (L\otimes N)\oplus (M\otimes N). \] 
\end{lemma}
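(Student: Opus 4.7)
The plan is to construct mutually inverse even Lie superalgebra homomorphisms between $(L\oplus M)\otimes N$ and $(L\otimes N)\oplus (M\otimes N)$. First I would verify that the proposed actions are well defined: $\mathbb{K}$-bilinearity of the formula ${}^{(l,m)}n = {}^l n + {}^m n$ is immediate, and the only subtle action axiom for $L\oplus M$ acting on $N$ is
\[ {}^{[(l,0),(0,m)]}n \;=\; {}^{(l,0)}\!\bigl({}^{(0,m)}n\bigr) - (-1)^{|l||m|}\,{}^{(0,m)}\!\bigl({}^{(l,0)}n\bigr), \]
whose left-hand side is zero because $[(l,0),(0,m)]=0$ in $L\oplus M$, and whose right-hand side vanishes precisely by hypothesis (ii). A symmetric check handles the action of $N$ on $L\oplus M$, while compatibility of the combined actions follows from (i) together with (ii).

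Using the inclusions $L\hookrightarrow L\oplus M$ and $M\hookrightarrow L\oplus M$ together with the functoriality of the non-abelian tensor product (cf.\ Proposition~\ref{prop4}), one obtains an even Lie superalgebra homomorphism
\[ \theta : (L\otimes N)\oplus (M\otimes N) \longrightarrow (L\oplus M)\otimes N \]
sending $l\otimes n \mapsto (l,0)\otimes n$ and $m\otimes n \mapsto (0,m)\otimes n$. The main step is to produce the inverse $\phi:(L\oplus M)\otimes N \to (L\otimes N)\oplus (M\otimes N)$ sending each generator $(l,m)\otimes n$ to $l\otimes n + m\otimes n$. This requires showing that $\phi$ respects the defining relations (1)--(4) of the non-abelian tensor product; relations (1) and (2) are routine, and relation (4) follows by a short calculation from the definitions of the two actions.

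The decisive and hardest verification is relation (3). Expanding
\[ [(l,m),(l',m')]\otimes n = ([l,l'],[m,m'])\otimes n \]
and applying $\phi$ yields $[l,l']\otimes n + [m,m']\otimes n$, whereas expanding the right-hand side of relation (3) inside $(L\oplus M)\otimes N$ and then applying $\phi$ produces the same two terms together with cross terms of the shape $m\otimes {}^{l'}\!n\in M\otimes N$ and $l\otimes {}^{m'}\!n\in L\otimes N$ (and their counterparts with the roles of the variables swapped). These cross terms are the main obstacle, and they are killed exactly by hypothesis (iii): since ${}^{l'}n\in[L,N]^{N}$ and ${}^{m'}n\in[M,N]^{N}$, the triviality of the maps in (iii), combined with the symmetry isomorphism of Proposition~\ref{prop3}, forces each such element to vanish. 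Once $\phi$ is shown to be well defined, the identities $\phi\circ\theta=\mathrm{id}$ and $\theta\circ\phi=\mathrm{id}$ hold on generators by inspection, completing the isomorphism.
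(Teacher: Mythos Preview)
Your approach is essentially the paper's: define $\phi:(L\oplus M)\otimes N\to(L\otimes N)\oplus(M\otimes N)$ on generators by $(l,m)\otimes n\mapsto(l\otimes n,m\otimes n)$, verify it respects the defining relations (with hypothesis~(iii) killing the cross terms), and invert it via the inclusions $L,M\hookrightarrow L\oplus M$. One small correction: you describe relation~(4) as following ``by a short calculation from the definitions of the two actions'', but in fact relation~(4) needs hypothesis~(iii) just as much as relation~(3) does---expanding $\phi\bigl(-(-1)^{|(l,m)||n|}\,{}^{n}(l,m)\otimes{}^{(l',m')}n'\bigr)$ produces the same kind of cross terms ${}^{n}l\otimes{}^{m'}n'$ and ${}^{n}m\otimes{}^{l'}n'$, which again vanish only because of~(iii). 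The paper makes the symmetric slip: it explicitly checks relation~(4) (invoking~(iii) there) but waves relation~(3) through as ``easy''. Both checks are needed, and both use~(iii) in the same way, so once you add the analogous sentence for relation~(4) your argument is complete and matches the paper's.
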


\begin{proof}
Consider  $L \oplus M$ acts on $N$ as defined. Let $(l_1 , m_1)\in L_{\alpha}$ and $(l_2 , m_2)\in L_{\beta}$ for $\alpha, \beta \in \mathbb{Z}_{2}$, hence $|(l_1 , m_1)|=\alpha$ and $|(l_2, m_2)|=\beta$. Now using ${}^l({}^m n)= (-1)^{|l| |m|}({}^m({}^l n))$ we have
\begin{align*}
{}^{(l_1 , m_1)}({}^{(l_2 , m_2)}n)-(-1)^{\alpha \beta}({}^{(l_2 , m_2)}({}^{(l_1,m_1)}n))
&={}^{(l_1 , m_1)}({}^{l_2} n, {}^{m_2} n)-(-1)^{\alpha \beta}({}^{(l_2 , m_2)}({}^{l_1} n, {}^{m_1} n))\\
&={}^{l_1}({}^{l_2}n)+{}^{m_1}({}^{l_2}n)+{}^{l_1}({}^{m_2}n)+{}^{m_1}({}^{m_2}n)\\
&-(-1)^{\alpha \beta}({}^{l_2}({}^{l_1}n)+{}^{m_2}({}^{l_1}n)+{}^{l_2}({}^{m_1}n)+{}^{m_2}({}^{m_1}n))\\
&={}^{l_1}({}^{l_2}n)-(-1)^{\alpha \beta}({}^{l_2}({}^{l_1}n))+{}^{m_1}({}^{m_2}n)-(-1)^{\alpha \beta}({}^{m_2}({}^{m_1}n))\\
&= {}^{[l_1 , l_2]}n+{}^{[m_1 , m_2]}n\\
&={}^{[(l_1 , m_1), (l_2 , m_2)]}n. 
\end{align*}
Take $(l , m)\in L_{\alpha}$ and $n_1 \in L_{\beta}$ and clearly ${}^{(l , m)}[n_1 , n_2]=[{}^{(l , m)}n_1, n_2]+(-1)^{\alpha \beta}[n_1,{}^{(l , m)}n_2]$ holds.

\smallskip

 Now consider  $N$ acts on $L \oplus M$ as given. For $n_{1}\in N_{\alpha}$ and $ n_{2} \in N_{\beta}$ for $\alpha, \beta \in \mathbb{Z}_{2}$ and $l \in L$, $m \in M$ then
\begin{align*}
{}^{[n_{1},  n_{2}]}(l,m) &=({}^{[n_1 , n_2]}l, {}^{[n_1 , n_2]}m)\\
&= ({}^{n_1}({}^{n_{2}}l)-(-1)^{\alpha \beta} ({}^{n_2}({}^{n_1}l)), {}^{n_1}({}^{n_2}m)-(-1)^{\alpha \beta} ({}^{n_2} ({}^{n_1}m)))\\
&=({}^{n_1}({}^{n_2} l), {}^{n_1}({}^{n_2}m))- (-1)^{\alpha\beta}({}^{n_2}({}^{n_1} l), {}^{n_2}({}^{n_1}m))\\
&=({}^{n_1}({}^{n_2 }(l , m))- (-1)^{\alpha\beta}({}^{n_2}({}^{n_1}(l , m)).
\end{align*} 
Let $n , m \in N$ and $(l_1 , m_1), (l_2 , m_2) \in L \oplus M$ where $|n|=\alpha$ and $|(l_{1} , m_{1})|=\beta$,
\begin{align*} 
{}^n[(l_1 , m_1), (l_2 , m_2)] &= ([{}^nl_1 , l_2]+(-1)^{\alpha \beta}[l_1 , {}^nl_2],
 [{}^n m_1 , m_2]+(-1)^{\alpha\beta}[m_1 , {}^n m_2])\\
&=([{}^n l_1 , l_2], [{}^n m_1 , m_2])+(-1)^{\alpha \beta}([l_1 , {}^n l_2], [m_1 , {}^n m_2])\\
&=[({}^n l_1, {}^n m_1), (l_2 , m_2)]+(-1)^{\alpha \beta} ([l_1 , m_1], [{}^nl_2 , {}^n m_2])\\
&= [{}^n (l_1 , m_{1}), (l_2 , m_2)]+(-1)^{\alpha \beta}[(l_1 , m_1), {}^n (l_{2} , m_{2})].
\end{align*} 
Hence the defined actions are Lie actions.
Finally define 

\begin{align*}
\phi: (L \oplus M)\otimes N &\longrightarrow (L \otimes N)\oplus (M \otimes N)\\
(l,m)\otimes n\mapsto&(l\otimes n, m\otimes n).
\end{align*}
It is easy to check $\phi$ is an even linear map. Now to see $\phi$ is a Lie superalgebra homomorphism  consider for $|(l_{1},m_{1})|=\alpha,~|n_{1}|=\beta$
\begin{align*}
\phi \big([(l_{1} , m_{1})\otimes n_{1} , (l_{2} , m_{2})\otimes n_{2}] \big) &= -(-1)^{\alpha\beta}\phi \big( {}^{n_{1}}{(l_{1},m_{1})}\otimes {}^{(l_{2},m_{2})}{n_{2}}    \big) \\
& = -(-1)^{\alpha\beta} \phi \big(({}^{n_{1}}{l_{1}},{}^{n_{1}}{m_{1}})\otimes ({}^{l_{2}}{n_{2}}+{}^{m_{2}}{n_{2}}) \big)\\
&= -(-1)^{\alpha\beta} \phi \big( ({}^{n_{1}}{l_{1}},{}^{n_{1}}{m_{1}})\otimes {}^{l_{2}}{n_{2}} + ({}^{n_{1}}{l_{1}},{}^{n_{1}}{m_{1}})\otimes {}^{m_{2}}{n_{2}} \big) \\
&= -(-1)^{\alpha\beta} \big ( \big( {}^{n_{1}}{l_{1}} \otimes {}^{l_{2}}{n_{2}}, {}^{n_{1}}{m_{1}} \otimes {}^{l_{2}}{n_{2}} \big) + \big( {}^{n_{1}}{l_{1}} \otimes {}^{m_{2}}{n_{2}}, {}^{n_{1}}{m_{1}} \otimes {}^{m_{2}}{n_{2}} \big) \big ).
\end{align*}
Since ${}^{n_{1}}{m_{1}} \otimes {}^{l_{2}}{n_{2}}= {}^{n_{1}}{l_{1}} \otimes {}^{m_{2}}{n_{2}}=0 $. So we get
\begin{align*}
\phi \big([(l_{1},m_{1})\otimes n_{1}, (l_{2},m_{2})\otimes n_{2}] \big) &=-(-1)^{\alpha\beta}  \big( {}^{n_{1}}{l_{1}} \otimes {}^{l_{2}}{n_{2}},   {}^{n_{1}}{m_{1}} \otimes {}^{m_{2}}{n_{2}} \big) \\
&= \big([l_1\otimes n_1,l_2\otimes n_2],[m_1 \otimes n_1, m_2 \otimes n_2] \big)\\
&= \big[(l_1\otimes n_1,m_1 \otimes n_1),(l_2\otimes n_2,m_2 \otimes n_2) \big ] \\
&= \big[ \phi \big( (l_1 , m_1)\otimes n_1  \big),\phi \big((l_2,m_2)\otimes n_2 \big) \big].
\end{align*}
Thus $\phi$ is a Lie superalgebra homomorphism. Finally we need to find the inverse of $\phi$. Here note that the embeddings $L$ in $L\oplus M$ and $M$ in $L\oplus M$ give rise to the embeddings $\psi_{1}:L\otimes N \longrightarrow (L\oplus M)\otimes N$ and $\psi_{2}:M\otimes N \longrightarrow (L\oplus M)\otimes N$ respectively. By combining $\psi_{1}$ and $\psi_{2}$  we can get the inverse of $\phi$.
\end{proof}

As a Lie superalgebra  $P$ acts on itself by Lie multiplication, hence we can consider the tensor square $P \otimes P$.

\begin{proposition}\label{prop4a}
For Lie superalgebras $L$ and $M$,
\[(L \oplus M) \otimes (L \oplus M) \cong (L \otimes L) \oplus (L\otimes M) \oplus (M \otimes L) \oplus (M \otimes M).\]
\end{proposition}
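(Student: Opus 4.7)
My plan is to derive this isomorphism as a double application of Lemma \ref{lemma0'}, with the symmetry isomorphism of Proposition \ref{prop3} used to swap factors in between. First I would apply Lemma \ref{lemma0'} taking the outer $L\oplus M$ as the decomposition and $N = L\oplus M$, obtaining
$$(L \oplus M) \otimes (L \oplus M) \cong (L \otimes (L \oplus M)) \oplus (M \otimes (L \oplus M)).$$
Next I would invoke Proposition \ref{prop3} on each summand to flip it into the form $(L\oplus M)\otimes K$ with $K\in\{L,M\}$, and then apply Lemma \ref{lemma0'} a second time, now with $N=L$ and $N=M$ respectively, to decompose these as $(L\otimes L)\oplus(M\otimes L)$ and $(L\otimes M)\oplus(M\otimes M)$. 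Combining and rearranging the four summands (using that the direct sum of supermodules is commutative) yields the claimed isomorphism.

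The substantive work lies in checking that the hypotheses of Lemma \ref{lemma0'} actually hold in each of the three instances. The key observation is that $L$ and $M$ are graded ideals of $L\oplus M$ with $[L,M]=0$, so every relevant action, whether $N$ equals $L\oplus M$, $L$, or $M$, is the restriction of the bracket of $L\oplus M$; hence all actions are automatically compatible, giving condition (i). The commutativity $[L,M]=0$ then forces ${}^l({}^m n)=0 = {}^m({}^l n)$ for $l\in L$, $m\in M$, $n\in N$, yielding condition (ii). For condition (iii), the submodule $[L,N]^N$ sits inside $L$ since ${}^l n = [l,n]\in L$; the triviality of the induced map $[L,N]^N\otimes M\to N\otimes M$ then follows by writing each generator in the form $[l,l']$ and using the defining relation
$$[l,l']\otimes m = l\otimes {}^{l'}m - (-1)^{|l||l'|}\, l'\otimes {}^l m,$$
together with ${}^l m = {}^{l'}m = 0$. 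The analogous statement for $[M,N]^N\otimes L$ is symmetric.

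The main obstacle is purely bookkeeping: one must be careful that when $N$ shrinks from $L\oplus M$ to $L$ or $M$ in the outer application, the compatibility and triviality conditions still hold with the restricted actions. Once this is confirmed in each instance, the proof is just a formal chain of three isomorphisms together with the associativity and commutativity of the direct sum; no delicate computation inside the tensor product itself is needed beyond what has already been packaged into Lemma \ref{lemma0'}.
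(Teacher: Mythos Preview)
Your proposal is correct and follows essentially the same route as the paper: both arguments apply Lemma~\ref{lemma0'} with $N=L\oplus M$ to obtain $(L\oplus M)\otimes(L\oplus M)\cong (L\otimes(L\oplus M))\oplus(M\otimes(L\oplus M))$, and then split each summand once more. The only difference is cosmetic: the paper records the second splitting in one line without explicitly naming the symmetry isomorphism of Proposition~\ref{prop3}, whereas you spell out the flip $(K\otimes(L\oplus M))\cong((L\oplus M)\otimes K)$ before reapplying Lemma~\ref{lemma0'}, and you are more explicit in verifying hypotheses (i)--(iii) in each instance.
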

\begin{proof}
 Consider the Lie superalgebras $L$, $M$ and $P=L \oplus M$. Let us define the actions of $L$ on $P$ (respectively $P$ on $L$) by Lie brackets. Also $M$ and $P$ act with each other by Lie brackets. But $L$ and $M$ acts trivially with each other and also $L$ and $M$ are graded ideals of $P$. So all actions are compatible and it can be readily checked that rest conditions of Lemma \ref{lemma0'} are also satisfied. So with the action of $L \oplus M$ on $P$ given as in Lemma \ref{lemma0'} and we have the isomorphism  $(L \oplus M) \otimes P= (L \otimes P) \oplus (M \otimes P)$. Further replacing with $P=L \oplus M$ we have
 \begin{align*}
 (L\oplus M)\otimes (L\oplus M) &\cong (L \otimes (L \oplus M)) \oplus (M \otimes (L \oplus M))\\
 &\cong(L\otimes L) \oplus (L\otimes M)\oplus (M\otimes L) \oplus (M \otimes M).
 \end{align*}
\end{proof}
\subsection{Non-abelian Exterior Product }
Let $P$ be a Lie superalgebra and $(M,\partial)$ and $(N,\partial ')$ be two crossed $P$-modules with the actions of $M$ and $N$ on each other via $P$. Then 

\begin{lemma}\label{lem4b}\cite[Lemma 6.1]{GKL2015}
Let $M \square N$ be the submodule of $M \otimes N$ generated by elements
\begin{enumerate}
\item $m\otimes n + (-1)^{|m'||n'|}m'\otimes n'$, where $\partial (m)=\partial '(n')$ and $\partial (m')=\partial '(n)$
\item $m_{0} \otimes n_{0}$, where $\partial (m_{0})=\partial '(n_{0})$,
\end{enumerate} 
with $m,m' \in M_{\overline{0}} \cup M_{\overline{1}},~n,n' \in N_{\overline{0}} \cup N_{\overline{1}},~m_{0} \in M_{\overline{0}}$ and $n_{0} \in N_{\overline{0}}$. Then $M \square N$ is a graded central ideal of $M \otimes N$.
\end{lemma}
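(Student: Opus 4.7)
The plan is to show that every generator of $M\square N$ lies in the center of $M\otimes N$, and that $M\square N$ is $\mathbb{Z}_2$-graded; any central graded submodule is automatically an ideal, so this yields the claim. Gradedness is nearly immediate: because $\partial$ and $\partial'$ are even, the defining equalities $\partial(m)=\partial'(n')$ and $\partial(m')=\partial'(n)$ in a type (1) generator force $|m|=|n'|$ and $|m'|=|n|$, so $|m\otimes n|=|m|+|n|=|m'|+|n'|=|m'\otimes n'|$, making the generator homogeneous; type (2) generators are even by hypothesis.

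For centrality, I would compute $[g,\,m''\otimes n'']$ for each type of generator $g$ and an arbitrary tensor generator $m''\otimes n''$; by bilinearity this controls $[M\square N, M\otimes N]$, and graded skew-symmetry disposes of the other side. The tools are relation (4) of the tensor product,
\[[m\otimes n,\,m''\otimes n'']=-(-1)^{|m||n|}\,({}^n m)\otimes ({}^{m''}n''),\]
together with the crossed-module axiom ${}^{\partial(m)}m'=[m,m']$ and the fact that the actions of $M$ and $N$ on each other factor through $P$ (so ${}^n m={}^{\partial'(n)}m$ and ${}^m n={}^{\partial(m)}n$). For type (2) generators $m_0\otimes n_0$ with $\partial(m_0)=\partial'(n_0)$ and $m_0,n_0$ even, this gives
\[{}^{n_0}m_0={}^{\partial'(n_0)}m_0={}^{\partial(m_0)}m_0=[m_0,m_0]=0,\]
so the bracket vanishes. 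For type (1) generators I would rewrite
\[{}^{n}m={}^{\partial'(n)}m={}^{\partial(m')}m=[m',m]=-(-1)^{|m||m'|}[m,m']=-(-1)^{|m||m'|}\,{}^{n'}m',\]
expand $\bigl[m\otimes n+(-1)^{|m'||n'|}m'\otimes n',\,m''\otimes n''\bigr]$ via relation (4), and observe that the two resulting tensors in $({}^{n'}m')\otimes({}^{m''}n'')$ come with opposite signs and cancel.

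The main obstacle is the sign bookkeeping in the type (1) case, where the four factors $(-1)^{|m||n|}$, $(-1)^{|m'||n'|}$, $(-1)^{|m||m'|}$ and the one absorbed via graded skew-symmetry must be collapsed using the parity identities $|m'|=|n|$ and $|m|=|n'|$ extracted in the gradedness step. Once this cancellation is verified on generators, bilinearity extends the vanishing of $[\,\cdot\,,\,\cdot\,]$ to all of $M\square N$, so $M\square N\subseteq Z(M\otimes N)$, and together with gradedness this exhibits $M\square N$ as a graded central ideal of $M\otimes N$.
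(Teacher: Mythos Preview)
The paper does not supply its own proof of this lemma; it is quoted verbatim from \cite[Lemma~6.1]{GKL2015} and used as a black box, so there is nothing to compare against. Your argument is a correct direct verification: the parity constraints $|m|=|n'|$, $|m'|=|n|$ follow from the evenness of $\partial,\partial'$ and make the generators homogeneous, and the sign check in the type~(1) case does collapse as you describe (since $|m||n|+|m||m'|=2|m||n|\equiv 0$). One small point worth making explicit is that vanishing of $[g,m''\otimes n'']$ on tensor generators extends to all of $M\otimes N$ via the graded Jacobi identity, not merely bilinearity, because $M\otimes N$ is generated as a Lie superalgebra (not as a module) by the simple tensors.
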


\begin{definition}
Let $P$ be a Lie superalgebra and $(M , \partial)$ and $(N , \partial^{'})$ two crossed $P$-modules. Then the exterior product of $M$ and $N$ is denoted as $M \wedge N$ and is defined as \[M\wedge N= \frac{M\otimes N}{ M\square N}.\] 
\end{definition}

Most of the results on non-abelian tensor product are fulfilled by the non-abelian exterior product. Precisely there are homomorphisms of Lie superalgebras $M \wedge N \longrightarrow M$, $M \wedge N \longrightarrow N$ and actions of $M$ and $N$ on $M \wedge N$. The exterior product is symmetric. 

\smallskip

Consider a Lie superalgebra $P=P_{\bar{0}} \oplus P_{\bar{1}}$ and the identity map on $P$, i.e the map $id: P \longrightarrow P$. The map $id$, is a crossed module and hence one can think of exterior square $P \wedge P$. In $P \wedge P$, for any $p, q \in P_{\bar{0}} \cup P_{\bar{1}}$ and $p_{\bar{0}} \in P_{\bar{0}}$ we have $p \wedge q= -(-1)^{|p||q|} q \wedge p$ and $p_{\bar{0}} \wedge p_{\bar{0}}=0$.

 \begin{theorem} \label{th3} 
For Lie superalgebras $H$ and $K$, \[(H\oplus K)\wedge (H\oplus K) \cong (H\wedge H) \oplus (K\wedge K)\oplus (H/H'\otimes K/K') .\]
 Moreover
\[ Z^{\wedge}(H\oplus K)\subseteq Z^{\wedge}(H)\oplus Z^{\wedge}(K). \]
\end{theorem}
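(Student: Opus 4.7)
My plan is to derive the direct‐sum formula by first applying Proposition~\ref{prop4a} to the underlying tensor square, then carefully carrying the defining relations of $\square$ through that four‐term splitting. Since $H$ and $K$ are ideals of $H\oplus K$ with trivial action on one another, Proposition~\ref{prop4a} gives
\[
(H\oplus K)\otimes(H\oplus K)\;\cong\;(H\otimes H)\oplus(H\otimes K)\oplus(K\otimes H)\oplus(K\otimes K).
\]
Under the quotient defining $\wedge$, I would show that the graded central ideal $(H\oplus K)\square(H\oplus K)$ splits compatibly with this decomposition: the relations of Lemma~\ref{lem4b} supported entirely in $H$ generate $H\square H$ inside $H\otimes H$, and symmetrically for $K$, producing the summands $H\wedge H$ and $K\wedge K$.

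The essential step is then treating the two cross summands $H\otimes K$ and $K\otimes H$. Using the identity $\partial=\mathrm{id}$ on $H\oplus K$, the first family of relations in Lemma~\ref{lem4b} applied with $m=h\in H$, $n'=h\in H$, $m'=k\in K$, $n=k\in K$ collapses to $h\otimes k+(-1)^{|h||k|}k\otimes h=0$, so $K\otimes H$ is folded (up to a sign) onto $H\otimes K$ and only one cross copy survives. Since $H$ and $K$ act trivially on each other, Proposition~\ref{prop2} gives
\[
H\otimes K\;\cong\; H^{ab}\otimes_{\mathbb{K}} K^{ab}\;=\;H/H'\otimes K/K'.
\]
Assembling the three surviving summands yields the asserted isomorphism; I would finish by exhibiting the map $(h,k)\wedge(h',k')\mapsto (h\wedge h',\,k\wedge k',\,\bar h\otimes\bar k'+(-1)^{|h'||k|}\bar h'\otimes\bar k\ldots)$ and constructing an inverse from the three natural inclusions, analogous to the construction of $\phi^{-1}$ in Lemma~\ref{lemma0'}.

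For the second assertion, let $(h,k)\in Z^{\wedge}(H\oplus K)$. For every $h'\in H$ the element $(h,k)\wedge(h',0)$ vanishes; projecting onto the $H\wedge H$ summand of the decomposition above gives $h\wedge h'=0$, hence $h\in Z^{\wedge}(H)$. Testing against $(0,k')$ and projecting onto the $K\wedge K$ summand yields $k\wedge k'=0$ for all $k'\in K$, so $k\in Z^{\wedge}(K)$. Therefore $(h,k)\in Z^{\wedge}(H)\oplus Z^{\wedge}(K)$, proving the inclusion.

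The main obstacle I expect is bookkeeping the $\mathbb Z_2$-grading signs when verifying that the relations of $(H\oplus K)\square(H\oplus K)$ split cleanly along the four summands. In particular, one must check that mixed relations of the form $(h,k)\otimes(h',k')+(-1)^{\cdots}(h'',k'')\otimes(h''',k''')$ do not create unexpected identifications between the pure pieces $H\otimes H$ and $K\otimes K$; this reduces, after expanding, to the two independent cases $\partial=\mathrm{id}$ on $H$ and on $K$, together with the cross identification already described. Once that is confirmed, the rest of the argument is formal.
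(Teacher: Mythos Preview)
Your proposal is correct and follows essentially the same route as the paper: decompose the tensor square via Proposition~\ref{prop4a}, track how the $\square$-relations split along the four summands, and invoke Proposition~\ref{prop2} for the cross piece; the argument for the inclusion $Z^{\wedge}(H\oplus K)\subseteq Z^{\wedge}(H)\oplus Z^{\wedge}(K)$ is also the same as the paper's. Your treatment of the cross terms is in fact more explicit than the paper's—where the paper simply writes $H\square K=0=K\square H$ and appeals to Propositions~\ref{prop2} and~\ref{prop3}, you spell out that the relation $h\otimes k+(-1)^{|h||k|}k\otimes h$ is exactly what folds $K\otimes H$ onto $H\otimes K$, which is the real content of that step.
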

\begin{proof}
Let $(H,\partial)$ and $(K,\partial ')$ be two crossed $P=H\oplus K$-modules with the actions of $P$ on $H$ and $K$ are defined by Lie multiplication. Here $\partial$ and $\partial'$ are defined by $\partial(h) = (h, 0), \partial(k) = (0, k)$ for $h \in H$ and $k\in K$.
\begin{align*}
 (H\oplus K) \wedge (H\oplus K) &\cong (H\oplus K) \otimes (H\oplus K)+((H \oplus K) \square (H \oplus K)).\\
 &=(H \otimes H) \oplus (K\otimes K)\oplus (H \otimes K) \oplus (K \otimes H)+ H \square H+ K \square K+ H \square K+ K \square H.\\
 &=H \wedge H+ K \wedge K+ H/H'\otimes K/K'
 \end{align*}
 Since $H \square K=0= K \square H$ and with Proposition \ref{prop2} and \ref{prop3} last equality comes.
 
 \smallskip
 
  Moreover let $l+k \in Z^{\wedge}(H \oplus K)$, i.e. $(l+k) \wedge (x+y)=0$ for all $x\in H$ and for all $y \in  K$ which clearly implies $l\wedge x=0 $ for all $x \in H$ and also $k \wedge y=0$ for all $y \in K$. Hence $l+k \in Z^{\wedge}(H)\oplus Z^{\wedge}(K)$. Hence the theorem.
 \end{proof}
 Take $K=L, M=N, P=Q$ in Proposition \ref{prop4}, then the exact sequence of Lie superalgebras $0 \longrightarrow K \longrightarrow M \longrightarrow P \longrightarrow 0$ induces an exact sequence $$ K \wedge M \longrightarrow M \wedge M \longrightarrow P \wedge P \longrightarrow 0.$$
 
\section{Chracterisation of capable Lie superalgebras}\label{sec5}

We start this section by defining capable Lie superalgebra which is followed by some criterion to detect capable Lie superalgebras. 
\begin{definition}
A Lie superalgebra $L$ is said to be $capable$ if there exists a Lie superalgebra $H$ such that $L \cong H/Z(H)$. 
\end{definition}
For instance an example of capable Lie superalgebra (which is not a Lie algebra) is $H_{1}$. The Heisenberg Lie superalgebra $H_{1}$ is of dimension $(1 \mid 2)$ described by $H_{1} \cong <x, y, z \mid [x , y]=z>$. Consider the Lie superalgebra $L$ of dimension $(1\mid3)$ such that 
$L=<x, y, r, z\mid [x , y]=r, [x , r]=z>.$ Clearly $L/Z(L)\cong H_{1}$.

\smallskip

We denote $Z^{*}(L)$ to be the smallest graded ideal in $L$ such that $L/Z^{*}(L)$ is capable. 

\begin{remark}
 $Z^{*}(L)$ is a central ideal of $L$. Let $\phi$ be the isomorphism between $L/Z^{*}(L)$ and $H/Z(H)$ for some Lie superalgebra $H$. Our claim is for $x \in Z^{*}(L)$ we have $[x, y]=0$, for all $y \in L$. Here $[x, y] \in Z^{*}(L)$, so $\phi([x, y])=0=\phi(0)$ and this implies $[x, y]=0$ as desired.
 \end{remark}
 
\begin{lemma}\label{lemma0}
A Lie superalgebra $L$ is capable if and only if $Z^{*}(L)=\{0\}$.
\end{lemma}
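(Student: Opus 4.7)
The plan is to argue both directions directly from the definition of $Z^*(L)$ as the smallest graded ideal of $L$ whose quotient is capable. There should be no serious computation; the content of the lemma is essentially tautological once the epicenter has been defined this way.

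For the forward implication, suppose $L$ is capable. Then by definition there exists a Lie superalgebra $H$ with $L \cong H/Z(H)$. In particular, $L/\{0\} \cong L$ is capable, so the zero ideal itself belongs to the family of graded ideals $I \trianglelefteq L$ for which $L/I$ is capable. By the minimality in the definition of $Z^*(L)$ we obtain $Z^*(L) \subseteq \{0\}$, and hence $Z^*(L) = \{0\}$.

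For the reverse implication, suppose $Z^*(L) = \{0\}$. By the definition of $Z^*(L)$ itself, the quotient $L/Z^*(L)$ is capable; but this quotient is canonically isomorphic to $L$, so $L$ is capable.

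The one genuinely non-trivial point hiding behind this argument is that $Z^*(L)$ is well-defined as an honest minimum, i.e.\ that the intersection of all graded ideals $I$ with $L/I$ capable is itself of this form. The obstacle here is to show that capability is preserved under passage to the quotient by such an intersection: given graded ideals $I_1, I_2$ with $L/I_j \cong H_j/Z(H_j)$, one has to produce a Lie superalgebra $H$ realising $L/(I_1\cap I_2)$ as $H/Z(H)$. The standard Lie-algebra construction (take the pullback of $H_1$ and $H_2$ over $L/(I_1+I_2)$) transfers to Lie superalgebras with only bookkeeping of the $\mathbb{Z}_2$-grading, since centers, kernels and fibered products all respect the grading. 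Once this well-definedness is in hand, the two implications above complete the proof.
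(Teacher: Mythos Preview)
Your argument is correct. The paper actually states this lemma without proof, treating it as an immediate consequence of the definition of $Z^{*}(L)$ as the smallest graded ideal with capable quotient; your write-up simply unpacks that tautology explicitly, which is fine.

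One remark on your closing paragraph: the well-definedness issue you flag --- that the intersection of all graded ideals $I$ with $L/I$ capable is again such an ideal --- is exactly what the paper isolates and proves in the very next result, Lemma~\ref{lemma1} (closure of capability under intersections of ideals). So rather than sketching the pullback construction inside the proof of Lemma~\ref{lemma0}, it would be cleaner to either take the existence of $Z^{*}(L)$ as part of its definition (as the paper does) or to defer to Lemma~\ref{lemma1} explicitly. Either way, the logical content of your two implications is correct and matches the intended reading.
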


Now we characterize capable Lie superalgebras. The results are extended versions of the results of Lie algebra in \cite{Alamian2008} to Lie superalgebra case.

 \begin{lemma}\label{lemma1}
Let $\{N_{i}\}_{i \in J}$ be a family of graded ideals of a Lie superalgebra $L$. If for each $i \in J$, the quotient Lie superalgebra $L/N_{i}$ is capable, then so is $L/ \cap_{i \in J} N_{i}$.
\end{lemma}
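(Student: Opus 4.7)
The plan is to construct a Lie superalgebra $H$ with $H/Z(H) \cong L/\cap_{i\in J} N_i$ by a pullback over the capable quotients, in the spirit of the classical argument for groups and Lie algebras. For each $i \in J$, capability of $L/N_i$ supplies a Lie superalgebra $H_i$ together with an even isomorphism $\varphi_i \colon L/N_i \to H_i/Z(H_i)$; write $\pi_i \colon H_i \to H_i/Z(H_i)$ for the canonical projection, $q_i \colon L \to L/N_i$ for the quotient, and set $N = \cap_{i \in J} N_i$. I define
\[
H = \left\{(h_i)_{i\in J} \in \prod_{i\in J} H_i \ :\ \exists\, x \in L \text{ with } \pi_i(h_i) = \varphi_i(q_i(x)) \text{ for all } i \in J \right\},
\]
a graded sub-superalgebra of the direct product $\prod_i H_i$. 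The element $x$ is unique modulo $N$, so there is a well-defined even homomorphism $\phi \colon H \to L/N$ sending $(h_i) \mapsto x + N$.

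Next I would verify the easy structural properties of $\phi$. Surjectivity: given $x \in L$, lift each $\varphi_i(q_i(x))$ to a homogeneous $h_i \in H_i$ using that $\pi_i$ is surjective and grade-preserving. The kernel is computed directly, since $\phi((h_i)) = 0$ forces $\pi_i(h_i) = 0$ for every $i$, whence $\ker\phi = \prod_i Z(H_i)$. Note also that $\prod_i Z(H_i) \subseteq H$ by taking $x = 0$ in the defining condition.

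The heart of the proof is the identity $Z(H) = \prod_i Z(H_i)$. The inclusion $\supseteq$ is immediate from componentwise vanishing of brackets. For $\subseteq$, I take a homogeneous $(h_i) \in Z(H)$, fix $j \in J$, and for each homogeneous $t \in H_j$ exhibit a tuple in $H$ with $j$-th component $t$: lift $\pi_j(t) \in L/N_j$ to some $\tilde x \in L$ of matching parity, and for each $i \neq j$ use surjectivity of $\pi_i$ to pick a homogeneous $t_i \in H_i$ of the parity of $t$ with $\pi_i(t_i) = \varphi_i(q_i(\tilde x))$; setting $t_j = t$ yields $(t_i) \in H$. Centrality of $(h_i)$ then forces $[h_j,t] = 0$, so $h_j \in Z(H_j)$. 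Combined with the computation of $\ker \phi$, this gives $H/Z(H) \cong L/N$ and hence capability of $L/\cap_i N_i$.

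The main delicate point I anticipate is coherent bookkeeping of the $\mathbb{Z}_2$-grading throughout the lifting steps of the third paragraph; since each of $\pi_i$, $q_i$, and $\varphi_i$ is even and grade-preserving, homogeneous lifts of the required parity always exist, and the argument proceeds exactly as in the Lie algebra case treated in \cite{Alamian2008}. No finiteness on $J$ is needed, because the direct product of a family of Lie superalgebras with its componentwise bracket remains a Lie superalgebra whose center is the product of the centers.
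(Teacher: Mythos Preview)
Your proposal is correct and follows essentially the same pullback construction as the paper: you form the sub-superalgebra $H$ of $\prod_i H_i$ consisting of tuples compatible with a common $x\in L$, identify $Z(H)=\prod_i Z(H_i)$ by lifting an arbitrary homogeneous element in a fixed coordinate, and deduce $H/Z(H)\cong L/N$. The only cosmetic difference is that the paper builds the isomorphism in the other direction, defining $\psi\colon L/N\to K/Z(K)$ rather than your $\phi\colon H\to L/N$; one small slip to fix is that in the lifting step you should apply $\varphi_j^{-1}$ to $\pi_j(t)$ before lifting to $\tilde{x}\in L$, since $\pi_j(t)$ lives in $H_j/Z(H_j)$ rather than in $L/N_j$.
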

\begin{proof}
Since $N_{i}$ is capable for each $i \in J$, we have the following exact sequences,
\[ 0\longrightarrow Z(K_{i})\longrightarrow K_{i}\overset{\theta_i}{\longrightarrow} L/N_{i} \longrightarrow0.\]
Set $N=\cap_{i \in J} N_{i}$, is a graded ideal of $L$. Consider an arbitrary collection of Lie superalgebras $K_{i}$ with $i \in J$, for some index set $J$, then the the Cartesian product $\prod_{i \in J} K_{i}$ is a Lie superalgebra. Further set $K=\{(k_{i}) \in \prod_{i \in J} K_{i}~|~\exists ~l \in L~such ~that~\theta_{i}(k_{i})=l+N_{i},~\forall~i \in I\}$. Our claim is, $Z(K)=\prod_{i \in J} Z(K_{i})$. Let $(x_{1},x_{2},\ldots) \in \prod_{i \in J}Z(K_{i})$, i.e. $x_{i} \in Z(K_{i})$ for each ${i \in J}$. Clearly $\theta_{i}(x_{i})=N_{i}$ which implies $(x_{1},x_{2}, \ldots) \in K$ and as for all $(y_{1},y_{2}, \ldots)\in K$, we have $([x_{1}, y_{1}], [x_{2}, y_{2}], \ldots)=0$. Hence $\prod_{i \in J}Z(K_{i}) \subseteq Z(K)$. For the reverse containment let $x=(x_{1}, x_{2}, \ldots) \in Z(K)$. Consider an arbitrary element $k_{j} \in K_{j}$, then for each $i \in J$ there exists a $k_{i} \in K_{i}$ such that $\theta_{i}(k_{i})=\theta_{j}(k_{j})$. So $(k_{1}, k_{2}, \ldots )\in K$ and thus $0=[(k_{1}, k_{2}, \ldots),(x_{1}, x_{2}, \ldots)]=([k_{1}, x_{1}], \ldots)$. We get $x_{j} \in Z(K_{j})$.

Now, define a map $\psi:L/N\longrightarrow K/Z(K)$ given by $\psi(l+N)=k_{l}+Z(K)$, where $k_{l}=(k_{l,i}) \in K$ such that $\theta_{i}(k_{l,i})=l+N$ for each $i \in J$. Let $l+N, m+N \in L/N$ such that $l+N=m+N$. Then $ l-m \in N_{i}$ for all $i$ and hence $l+N_{i}=m+ N_{i}$. Then $\theta_{i}(k_{l,i}) =\theta_{i}(k_{m,i})$ and so $k_{l,i}-k_{m,i} \in \ker \theta_{i}$. Now for all $i$,  we get
$$ k_{l,i}+Z(K_{i})=k_{m,i} + Z(K_{i})$$ and hence
$\psi(l+N)=\psi(m+n)$. The map $\psi$ is well-defined. As
 $\theta_{i}$ are homogeneous maps of degree zero, so is $\psi$. Let $l+N, m+N \in L/N$ and $k_{m}=(k_{m, i}) \in K$ such that $\theta_{i}(k_{m,i})=m+Z(K_{i})$ for each $i \in J$. Then $\theta_{i}([k_{l,i},k_{m,i}])=[\theta_{i}(k_{l,i}),\theta_{i}(k_{m,i})]=[l,m]+N$, $\forall ~i \in J$. Hence $\psi[l+N, m+N]=[\psi(l+N), \psi(m+N)]$. Clearly $\psi$ is onto and finally $\ker\psi=\{l+N |k_{l, i} \in Z(K_{i}), \forall i \in J \}=N$, hence $\psi$ is one-one.
\end{proof}

Now using Lemma \ref{lemma1} we give a characterization of $Z^{*}(L)$ in terms of central extensions of the Lie superalgebra $L$.

\begin{theorem}\label{th3a}
$Z^{*}(L)=\cap\{\phi(Z(M)) \mid \phi:M\rightarrow L$ is an epimorphism with $\ker(\phi)\subseteq Z(M) \}$. 
\end{theorem}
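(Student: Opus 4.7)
The plan is to establish the two inclusions of $Z^{*}(L)=T$ separately, where $T$ denotes the intersection on the right-hand side.

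For the inclusion $Z^{*}(L)\subseteq T$, I fix an arbitrary epimorphism $\phi:M\rightarrow L$ with $\ker(\phi)\subseteq Z(M)$ and observe that $\phi^{-1}(\phi(Z(M)))=Z(M)+\ker(\phi)=Z(M)$; the First Isomorphism Theorem then gives $L/\phi(Z(M))\cong M/Z(M)$. Since $M/Z(M)$ is capable by definition, so is $L/\phi(Z(M))$, and by the minimality characterization of $Z^{*}(L)$ (Lemma \ref{lemma0}), I conclude $Z^{*}(L)\subseteq \phi(Z(M))$. Intersecting over all such $\phi$ yields the desired inclusion.

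For the reverse inclusion $T\subseteq Z^{*}(L)$, it suffices to produce a single central extension $\phi_{0}:M_{0}\rightarrow L$ with $\phi_{0}(Z(M_{0}))=Z^{*}(L)$. Capability of $L/Z^{*}(L)$ supplies a Lie superalgebra $H$ together with an isomorphism $\rho:H/Z(H)\rightarrow L/Z^{*}(L)$; writing $\pi$ and $\sigma$ for the two canonical projections, I form the pullback
\[M_{0}=\{(l,h)\in L\oplus H\,:\,\pi(l)=\rho\sigma(h)\},\]
which, because $\pi$, $\sigma$, $\rho$ are even homomorphisms, is a graded sub-superalgebra of $L\oplus H$. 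The first-coordinate projection $\phi_{0}$ is surjective (by surjectivity of $\sigma$) with kernel $\{0\}\oplus Z(H)$, which lies in $Z(M_{0})$; hence $\phi_{0}$ is a genuine central extension of $L$.

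The crux of the argument, and the step I expect to be the principal obstacle, is the identification $Z(M_{0})=\{(l,h):l\in Z^{*}(L),\,h\in Z(H)\}$. Given $(l,h)\in Z(M_{0})$ and an arbitrary $l'\in L$, the surjectivity of $\rho\sigma$ delivers $h'\in H$ with $(l',h')\in M_{0}$; vanishing of $[(l,h),(l',h')]$ then forces $[l,l']=0$ and $[h,h']=0$. Arbitrariness of $l'$ gives $l\in Z(L)$ and $h\in Z(H)$, whence $\sigma(h)=0$ and consequently $\pi(l)=\rho\sigma(h)=0$, i.e.\ $l\in Z^{*}(L)$. The reverse containment uses $Z^{*}(L)\subseteq Z(L)$ from the preceding Remark to see that such pairs are central in $M_{0}$. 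Projecting to the first factor then yields $\phi_{0}(Z(M_{0}))=Z^{*}(L)$, completing the argument.
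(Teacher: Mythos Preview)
Your proof is correct and follows essentially the same route as the paper: the pullback $M_{0}$ you form is exactly the paper's $K=\{(x,y)\in L\oplus M\mid \phi(y)=x+Z^{*}(L)\}$, used for the inclusion $T\subseteq Z^{*}(L)$, and your observation that $L/\phi(Z(M))\cong M/Z(M)$ is capable is the paper's argument for the reverse inclusion (the paper routes this through Lemma~\ref{lemma1}, whereas you invoke minimality of $Z^{*}(L)$ directly---note that this minimality is the \emph{definition} of $Z^{*}(L)$, not Lemma~\ref{lemma0} as you cite). One small tightening: in your computation of $Z(M_{0})$, the conclusion $h\in Z(H)$ requires letting $h'$ range over all of $H$ (finding a matching $l'$ via surjectivity of $\pi$), not merely letting $l'$ be arbitrary.
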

\begin{proof}
Let $S=\cap\{\phi(Z(M)) \mid \phi:M\rightarrow L$ is an epimorphism with $\ker(\phi)\subseteq Z(M) \}$.
Since $L/Z^{*}(L)$ is capable, there exists a Lie superalgebra $M$ such that $\phi:M\rightarrow L/Z^{*}(L)$ is an epimorphism with $\ker(\phi)=Z(M)$. Consider the Lie superalgebra $L \oplus M$ and suppose that
\[K=\{(x,y)\in L\oplus M \mid \phi(y)=x+Z^{*}(L) \}.\]
Define $\theta:L\oplus M\longrightarrow L$ by $\theta(x, y)=x$. Evidently $\theta$ is an onto homomorphism of even degree. Now consider the exact sequence
\[ 0 \longrightarrow \ker \theta~\longrightarrow~K~\overset{\theta}{\longrightarrow}~L~\longrightarrow 0. \] Our claim is $\ker \theta \subseteq Z(K)$. For any $(0, y)\in \ker \theta$ where $y \in M$ and For all $(x, y')\in K$ where $x \in L$ and $y \in M$, we have
$[(0,y), (x,y')] =(0,0)$. Since $\phi(y) \in Z^{*}(L)$ implies $y \in \ker \phi=Z(M)$. Therefore the exact sequence is a central extension of $L$. Let $(x,y) \in Z(K)$, then $y \in Z(M)=\ker(\phi)$ and $0=\phi(y)=x+Z^{*}(L)$, which implies $\theta(x,y)=x \in Z^{*}(L)$. Thus $S\subseteq \theta (Z(K)) \subseteq Z^{*}(L)$.
\par
Now, for any arbitrary central extension  $0\longrightarrow A \longrightarrow B \overset{\mu}{\longrightarrow} L \longrightarrow 0$, of Lie superalgebra $L$,
 we have the following isomorphism,
\[B/A \cong L,~~\frac{B/A}{Z(B)/A} \cong B/Z(B),~~Z(B)/A \cong \mu(Z(B)).\]
Thus, $L/ \mu(Z(B))\cong B/Z(B)$, i.e $L/\mu(Z(B))$ is capable. So for each central extension of $L$ we have $L/\mu(Z(B))$ is capable hence $L/\cap\mu(Z(B))$ is capable by Lemma \ref{lemma1}. By definition of $Z^{*}(L)$ we conclude that $Z^{*}(L)=S$.
\end{proof}

\begin{proposition}\label{prop5}
For any free presentation $0\longrightarrow R \longrightarrow F \overset{\pi}{\longrightarrow} L\longrightarrow 0$ and every central extension $0\longrightarrow M \longrightarrow K \overset{\phi} \longrightarrow L\longrightarrow 0$ of $L$, we have $\overline{\pi}(Z(F/[R,F]))\subseteq \phi(Z(K))$.
\end{proposition}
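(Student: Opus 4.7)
The plan is to use the freeness of $F$ to lift the projection $\pi$ through the central extension $\phi$, then show that this lift sends the center of $F/[R,F]$ into the center of $K$.

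First, since $F$ is a free Lie superalgebra and $\phi: K \to L$ is a surjective homomorphism of even degree, the universal property of $F$ produces an even homomorphism $\sigma: F \to K$ with $\phi \circ \sigma = \pi$. Because $\pi(R) = 0$, one has $\sigma(R) \subseteq \ker \phi = M \subseteq Z(K)$. Consequently $[\sigma(R), \sigma(F)] = 0$ in $K$, which gives $\sigma([R,F]) = 0$. Therefore $\sigma$ descends to an even homomorphism
\[
\bar{\sigma}: F/[R,F] \longrightarrow K
\]
satisfying $\phi \circ \bar{\sigma} = \bar{\pi}$, where $\bar{\pi}$ is the map $F/[R,F] \to L$ induced by $\pi$.

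Next, I would check that $\bar{\sigma}$ carries $Z(F/[R,F])$ into $Z(K)$. Fix a homogeneous $x \in F$ with $x + [R,F] \in Z(F/[R,F])$; by definition, $[x, f] \in [R,F]$ for every homogeneous $f \in F$, so
\[
[\sigma(x), \sigma(f)] = \sigma([x,f]) = 0 \quad \text{in } K.
\]
Now let $k \in K$ be an arbitrary homogeneous element. Since $\phi$ is surjective, choose $f \in F$ (homogeneous, of the same parity as $k$) with $\pi(f) = \phi(k)$; then $\phi(\sigma(f) - k) = 0$, so $\sigma(f) - k \in M \subseteq Z(K)$. This yields
\[
[\sigma(x), k] = [\sigma(x), \sigma(f)] - [\sigma(x), \sigma(f) - k] = 0 - 0 = 0,
\]
since the first bracket vanishes by the preceding display and the second vanishes because $\sigma(f) - k$ lies in the graded center. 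Hence $\sigma(x) \in Z(K)$.

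Finally, applying $\phi$ gives
\[
\bar{\pi}\bigl(x + [R,F]\bigr) = \pi(x) = \phi(\sigma(x)) \in \phi(Z(K)),
\]
which is the desired containment. The only nontrivial step is the initial lifting $\sigma: F \to K$; it relies on the universal property of the free Lie superalgebra $F$ combined with surjectivity of $\phi$ and the fact that both $\pi$ and $\phi$ are homogeneous of even degree, so that a homogeneous lift exists on a homogeneous generating set of $F$. Once $\sigma$ is in hand, the centrality of $M$ in $K$ makes the remaining verifications essentially formal, exactly as in the Lie-algebra proof in \cite{Alamian2008}.
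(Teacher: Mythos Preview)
Your proof is correct and follows essentially the same route as the paper: both construct a homomorphism $F/[R,F]\to K$ lifting $\overline{\pi}$ (the paper cites \cite[Lemma 3.2]{SN2018a} for this, while you build it directly from the universal property of $F$), and both then use $K=M+\operatorname{Im}\bar\sigma$ together with $M\subseteq Z(K)$ to conclude that the center of $F/[R,F]$ lands in $Z(K)$. Your argument is simply a more self-contained and carefully spelled-out version of the paper's terse proof.
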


\begin{proof}
Consider the central extension $0\longrightarrow \frac{R}{[R, F]} \longrightarrow \frac{F}{[R, F]} \overset{\overline{\pi}}{\longrightarrow} \frac{F}{R}\longrightarrow 0$.
By \cite[Lemma 3.2]{SN2018a}, there is a homomorphism $\beta:F/[R,F]\longrightarrow K$ and $K=M+Im~\beta$. Hence, $\beta(Z(F/[R,F]))$, i.e $\overline{\pi}(Z(F/[R,F]))\subseteq \phi(Z(K))$.
\end{proof}
An immediate consequence of Proposition \ref{prop5} is the following.

\begin{corollary}\label{cor1}
For any free presentation $0\longrightarrow R \longrightarrow F \overset{\pi}{\longrightarrow} L\longrightarrow 0$ of $L$, $Z^{*}(L)=\overline{\pi}(Z(F/[R,F]))$.
\end{corollary}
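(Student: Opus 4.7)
The plan is to combine Theorem \ref{th3a} with Proposition \ref{prop5}, noting that the sequence built from the free presentation is itself one of the central extensions participating in the intersection defining $Z^{*}(L)$.

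First I would establish the inclusion $\overline{\pi}(Z(F/[R,F])) \subseteq Z^{*}(L)$. By Theorem \ref{th3a}, $Z^{*}(L)$ is the intersection of $\phi(Z(K))$ taken over all central extensions $\phi : K \to L$ of $L$. Proposition \ref{prop5} says that for every such central extension, $\overline{\pi}(Z(F/[R,F])) \subseteq \phi(Z(K))$. Intersecting over all such $\phi$ immediately gives the desired containment.

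For the reverse inclusion $Z^{*}(L) \subseteq \overline{\pi}(Z(F/[R,F]))$, I would exhibit a particular central extension that realizes $\overline{\pi}(Z(F/[R,F]))$ as its image of the center. Consider
\[ 0 \longrightarrow R/[R,F] \longrightarrow F/[R,F] \overset{\overline{\pi}}{\longrightarrow} L \longrightarrow 0. \]
Since $[R/[R,F],\, F/[R,F]] = [R,F]/[R,F] = 0$, the graded ideal $R/[R,F]$ is contained in $Z(F/[R,F])$, so this is genuinely a central extension of $L$. It therefore appears among the extensions over which the intersection in Theorem \ref{th3a} is taken, so
\[ Z^{*}(L) \subseteq \overline{\pi}\bigl(Z(F/[R,F])\bigr), \]
which completes the argument.

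The main point to verify carefully is that this particular sequence is well-defined as a central extension of Lie superalgebras, that $\overline{\pi}$ is an even epimorphism, and that $\ker \overline{\pi} = R/[R,F]$ sits inside $Z(F/[R,F])$ — all of which are immediate from the definitions and the relation $[R,F] \subseteq [R,F]$. No real obstacle remains; the corollary is essentially a bookkeeping consequence of the two previous results.
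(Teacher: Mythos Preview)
Your argument is correct and is exactly the ``immediate consequence'' the paper has in mind: Proposition~\ref{prop5} gives $\overline{\pi}(Z(F/[R,F]))\subseteq \phi(Z(K))$ for every central extension, hence $\overline{\pi}(Z(F/[R,F]))\subseteq Z^{*}(L)$ by Theorem~\ref{th3a}, while the reverse inclusion follows since $0\to R/[R,F]\to F/[R,F]\overset{\overline{\pi}}{\to} L\to 0$ is itself one of those central extensions. The paper gives no further detail beyond calling it an immediate consequence, so your write-up simply spells out what is intended.
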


\begin{proposition}\label{prop6}
Let $L$ be a Lie superalgebra with a free presentation $0\longrightarrow R \longrightarrow F\overset{\pi}{\longrightarrow} L\longrightarrow 0$. If $S$ is an ideal in $F$ with $S/R=N$, then the following sequences are exact:
\begin{enumerate}
\item  $0\longrightarrow \frac{[S,F]\cap R}{[R,F]} \longrightarrow \mathcal{M}(L)  \overset{\sigma} \longrightarrow \mathcal{M}(L/N)\longrightarrow \frac{N\cap L^{\prime}}{[N,L]}\longrightarrow 0$.
\item  $ \mathcal{M}(L)  \overset{\sigma} \longrightarrow\mathcal{ M}(L/N)\longrightarrow \frac{N}{[N,L]}\longrightarrow \frac{L}{L^{\prime}} \longrightarrow \frac{L}{N+L^{\prime}}\longrightarrow 0$.
\item  For a central graded ideal $N$, $ N\otimes \frac{L}{L^{\prime}}\longrightarrow \mathcal{M}(L) \overset{\sigma} \longrightarrow \mathcal{M}(L/N)\longrightarrow N\cap L^{\prime} \longrightarrow 0$.
\end{enumerate}
\end{proposition}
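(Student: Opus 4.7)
The plan is to work inside the single free presentation $0\to R\to F\overset{\pi}{\to} L\to 0$. Since $S/R=N$ is an ideal of $L$, the sequence $0\to S\to F\to L/N\to 0$ is also a free presentation, and Lemma~\ref{lem3.1} yields the identifications
\[ \mathcal{M}(L)\cong (F'\cap R)/[F,R],\qquad \mathcal{M}(L/N)\cong (F'\cap S)/[F,S], \]
with $\sigma$ the map induced by the inclusion $R\hookrightarrow S$, sending $x+[F,R]$ to $x+[F,S]$.

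For part~(1), exactness at $\mathcal{M}(L)$ is immediate: since $[F,S]\subseteq F'$, one has $\ker\sigma = \bigl((F'\cap R)\cap[F,S]\bigr)/[F,R] = ([F,S]\cap R)/[F,R]$. The main work is exactness at $\mathcal{M}(L/N)$. The cokernel of $\sigma$ is $(F'\cap S)/\bigl((F'\cap R)+[F,S]\bigr)$. On the other hand, the modular law (valid because $R\subseteq S$) gives $S\cap(F'+R)=(S\cap F')+R$, whence $N\cap L'\cong (F'\cap S)/(F'\cap R)$; under this identification $[N,L]$ corresponds to $\bigl([F,S]+(F'\cap R)\bigr)/(F'\cap R)$. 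A direct application of the third isomorphism theorem then gives $(N\cap L')/[N,L]\cong (F'\cap S)/\bigl((F'\cap R)+[F,S]\bigr)$, and the induced map $\mathcal{M}(L/N)\to (N\cap L')/[N,L]$ sends $s+[F,S]$ to $(s+R)+[N,L]$, which is manifestly surjective.

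Part~(2) extends the sequence of~(1) two further terms. The inclusion $(N\cap L')/[N,L]\hookrightarrow N/[N,L]$ has cokernel $N/(N\cap L')\cong (N+L')/L'$, and this is exactly the image of the map $N/[N,L]\to L/L'$ induced by $N\hookrightarrow L$; its cokernel in $L/L'$ is visibly $L/(N+L')$, so splicing yields the five-term sequence. For part~(3), the hypothesis that $N$ is central gives $[S,F]\subseteq R$, so $\ker\sigma$ in~(1) simplifies to $[S,F]/[F,R]$. Define $\tau\colon N\otimes (L/L')\to \mathcal{M}(L)$ by $\bar s\otimes\bar f\mapsto [s,f]+[F,R]$, using lifts $s\in S$, $f\in F$. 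Well-definedness in $s$ is clear; in $f$, altering by an element of $R$ lands in $[F,R]$, and altering by $f'=\sum_i[a_i,b_i]\in F'$ produces, via the graded Jacobi identity, terms $[[s,a_i],b_i]$ and $[a_i,[s,b_i]]$ in which $[s,a_i],[s,b_i]\in [S,F]\subseteq R$, hence land in $[F,R]$. The image of $\tau$ is then exactly $[S,F]/[F,R]=\ker\sigma$, producing the required four-term exact sequence (and the hypothesis that $N$ and $L/L'$ are both abelian means that by Proposition~\ref{prop2} the non-abelian tensor product $N\otimes(L/L')$ coincides with the ordinary module tensor product, so there is no ambiguity in the notation).

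The main subtlety I anticipate is the double bookkeeping in part~(1): verifying both that the cokernel of $\sigma$ really is $(F'\cap S)/\bigl((F'\cap R)+[F,S]\bigr)$ and that this quotient coincides with $(N\cap L')/[N,L]$. Once this identification is in place, parts~(2) and~(3) follow either by formal splicing or by a single well-definedness computation. Grading is preserved throughout because all subspaces involved are graded and all maps are of even degree, so the super-setting contributes no real obstacle beyond keeping the signs in the graded Jacobi identity straight in the well-definedness check for $\tau$.
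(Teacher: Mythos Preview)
Your proposal is correct and follows essentially the same approach as the paper: both arguments work entirely inside the free presentation, identify $\mathcal{M}(L)\cong(F'\cap R)/[F,R]$ and $\mathcal{M}(L/N)\cong(F'\cap S)/[F,S]$, compute $\ker\sigma$ and $\mathrm{coker}\,\sigma$ directly, and in part~(3) build the map $N\otimes L/L'\to\mathcal{M}(L)$ from the bracket $[s,f]$ using that $[S,F]\subseteq R$ when $N$ is central. Your use of the modular law to identify $(N\cap L')/[N,L]$ and your explicit graded-Jacobi check for the well-definedness of $\tau$ are slightly more detailed than the paper's treatment, but the route is the same.
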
\label{prop7'}
\begin{proof}
With the given free presentation of $L$ multiplier is $\mathcal{M}(L)=\frac{R\cap F'}{[R,F]}$. Again as $L \cong F/R$, so $L/N \cong F/S$ and 
$0 \longrightarrow \frac{S}{R} \longrightarrow \frac{F}{R} \longrightarrow \frac{F}{S} \longrightarrow 0$ is free presentation of $\frac{L}{N}$. Hence $\mathcal{M}(L/N)=~\frac{S\cap F'}{[S,F]}$. Now $\frac{N \cap L'}{[N,L]} \cong \frac{S/R \cap (F/R)'}{[S/R,F/R]}=\frac{(S \cap F')+R}{[S,F]+R}$. The map $\frac{[S,F]\cap R}{[R,F]} \overset{\theta}\longrightarrow \frac{F' \cap R}{[R, F]}$ is an inclusion map and $ \Ima (\theta)=\frac{[S, F]\cap R}{[R, F]}$. Define 
$ \frac{F' \cap R}{[R, F]}  \overset{\sigma} \longrightarrow \frac{S\cap F'}{[S,F]}$ by  $x+[R, F] \mapsto x+[S, F]$. Here $\sigma$ is clearly a well defined map and a Lie superalgebra homomorphism. Now $ \ker (\sigma)= \frac{[S, F]\cap R}{[R, F]}$ and $\Ima (\sigma) =\frac{F'\cap R}{[S, F]}$. Finally consider the map $\frac{S\cap F'}{[S,F]} \overset{\theta'}\longrightarrow \frac{(S \cap F')+R}{[S,F]+R}$ is defined by $y+[S,F] \mapsto y+([S,F]+R)$. Here $\ker (\theta') =\frac{F' \cap R}{[S, F]}$ and $\Ima (\theta')= \frac{N \cap L'}{[N, L]}$. We conclude $
(1)$ is an exact sequence.

\smallskip

Clearly $\frac{N}{[N, L]} \cong \frac{S}{[S, F]+R}$, $\frac{L}{L'} \cong \frac{F}{F'+R}$ and $\frac{L}{N+L'} \cong \frac{F}{F'+S}$.
Consider the following sequence of homomorphisms $$\frac{F' \cap R}{[R, F]}  \overset{\sigma} \longrightarrow \frac{F' \cap S}{[S, F]} \overset{\psi} \longrightarrow \frac{S}{[S, F]+R} \overset{\psi'} \longrightarrow \frac{F}{F'+R} \overset{\psi''} \longrightarrow \frac{F}{F'+S} \longrightarrow 0.$$ Now $\Ima \sigma=\frac{F' \cap R}{[S, F]}= \ker \psi$, $\Ima \psi=\frac{S \cap F'}{[S,F]}= \ker \psi'$. Also $\Ima \psi'=\frac{S}{ F'+R}= \ker \psi''$ and finally $ \Ima \psi''=\frac{F}{F'+S}$ and hence we have required exact sequence $(2)$.

\smallskip

Since $N$ is central ideal,  $[S, F] \subseteq R$. Consider the bilinear map,
\begin{align*}
\frac{S}{R} \times \frac{F}{R+F'} \overset{\theta'} \longrightarrow & \frac{R\cap F'}{[R, F]}\\
(x+R, y+(R+F'))\mapsto & [x,y]+[R, F].
\end{align*}
The map $\frac{S}{R} \times \frac{F}{R+F'} \overset{f} \longrightarrow \frac{S}{R} \otimes \frac{F}{R+F'}$ is bilinear. By using universal property of tensor product there is an unique degree zero homomorphism $\theta: \frac{S}{R} \otimes \frac{F}{R+F'} \longrightarrow \mathcal{M}(L)$ such that $\theta'=\theta f$. Hence 
\begin{equation*}
\theta((x+R)\otimes y+[R, F'])= \theta'(x+R , y+[R, F'])
\end{equation*}
which implies $\Ima \theta=\frac{[S,F]}{[R, F]}$. Since $N$ is central, $\ker \sigma=\frac{[S, F]}{[R, F]}=\Ima \theta$. Further define, 
\begin{align*}
\frac{S \cap F'}{[S,F]} \overset{\theta'} \longrightarrow & \frac{(S \cap F')}{R}\\
x+[S, F] \mapsto & x+R.
\end{align*} The map is well defined as $[S, F] \subseteq R$ and $\ker \theta'=\frac{R \cap F'}{[S, F]}= \Ima \sigma$. Finally $\Ima \theta'=\frac{S \cap F'}{R}$ gives $(3)$ is exact.
\end{proof}

We now on consider $\mathcal{M}(L) \overset{\sigma} \longrightarrow \mathcal{M}(L/N)$ is the natural map.

\begin{theorem}\label{th1}
Let $N$ be a central ideal in a Lie superalgebra $L$. Then the following conditions are equivalent:
\begin{enumerate}
\item $ \frac {\mathcal{M}(L/N)}{\mathcal{M}(L)}\cong N \cap L'$
\item $N \subseteq Z^{*}(L)$.
\item The natural map $ \mathcal{M}(L)\overset{\sigma}\longrightarrow  \mathcal{M}(L/N)$ is monomorphism.
\end{enumerate}
\end{theorem}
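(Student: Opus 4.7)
The plan is to reduce all three conditions to a single concrete statement about $\ker\sigma$, using the exact sequence from Proposition~\ref{prop6}(1). Specialized to a central ideal $N$, the term $[N,L]$ vanishes; moreover, for a free presentation $0\to R\to F\to L\to 0$ with preimage $S\subseteq F$ of $N$, centrality of $N$ forces $[S,F]\subseteq R$, so $[S,F]\cap R=[S,F]$. The sequence therefore simplifies to
\[
0\longrightarrow \frac{[S,F]}{[R,F]}\longrightarrow \mathcal{M}(L)\overset{\sigma}{\longrightarrow} \mathcal{M}(L/N)\longrightarrow N\cap L'\longrightarrow 0,
\]
so condition $(3)$ is equivalent to the inclusion $[S,F]\subseteq[R,F]$. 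The strategy is to prove that each of $(1)$ and $(2)$ is also equivalent to this same inclusion.

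For $(3)\Leftrightarrow(1)$ I would read the outcome directly off the displayed sequence: if $\sigma$ is injective then the truncated short exact sequence $0\to \mathcal{M}(L)\to \mathcal{M}(L/N)\to N\cap L'\to 0$ delivers $(1)$. Conversely, the exact sequence always yields $\mathcal{M}(L/N)/\sigma(\mathcal{M}(L))\cong N\cap L'$, so $(1)$ combined with a dimension count in the finite-dimensional setting forces $\sigma(\mathcal{M}(L))\cong \mathcal{M}(L)$, i.e.\ $\sigma$ is a monomorphism.

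The heart of the argument is $(2)\Leftrightarrow(3)$, which I would prove by unwinding Corollary~\ref{cor1}. An element $x+R\in L$ lies in $Z^{*}(L)=\overline{\pi}(Z(F/[R,F]))$ precisely when there exists $r\in R$ with $[x+r,F]\subseteq[R,F]$; since $[r,F]\subseteq[R,F]$ automatically, this collapses to $[x,F]\subseteq[R,F]$. Applying this coset-by-coset to $N=S/R$ shows $N\subseteq Z^{*}(L)$ iff $[S,F]\subseteq[R,F]$, matching the kernel condition exactly. The subtle point, and where I expect the main difficulty, is precisely this observation that modifying the representative $x$ by an element of $R$ leaves the condition $[x,F]\subseteq[R,F]$ unchanged; this is what allows the epicenter---a set of cosets---to be characterized by a single subspace inclusion at the level of $F$, thereby closing the circle of implications.
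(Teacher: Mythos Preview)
Your proposal is correct and follows essentially the same approach as the paper: both reduce all three conditions to the single inclusion $[S,F]\subseteq[R,F]$ by identifying $\ker\sigma=[S,F]/[R,F]$ via the exact sequences of Proposition~\ref{prop6}, and both appeal to Corollary~\ref{cor1} to link this inclusion to $N\subseteq Z^{*}(L)$. The only cosmetic differences are that you specialize sequence~(1) while the paper quotes sequence~(3) directly, and you unwind Corollary~\ref{cor1} element by element (your observation that $[r,F]\subseteq[R,F]$ makes the representative irrelevant) whereas the paper phrases the same step as $\overline{S}\subseteq Z(\overline{F})$; your dimension count for $(1)\Rightarrow(3)$ is also a minor variant of the paper's identification of $\ker\theta$ with $\mathcal{M}(L)$.
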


\begin{proof}
To prove $(1)\Leftrightarrow (3)$, first consider the exact squence $(iii)$ of Proposition \ref{prop6}. If $\sigma$ is the natural monomorphism then $[S, F]=[R, F]$. Also $\mathcal{M}(L/N) \overset{\theta}\longrightarrow N \cap L'$ is an onto homomorphism and $\ker \theta=\frac{R\cap F'}{[S, F]}$. Hence $\ker \theta=\mathcal{M}(L)$. Conversely if we the isomorphism then $\frac{R \cap F'}{[S, F]}=\ker \theta=\mathcal{M}(L)$ implies we have the required monomorphism.
\smallskip

Now we want to prove $(2)\Leftrightarrow (3)$. Consider a free presentation $0 \longrightarrow R \longrightarrow F \longrightarrow L \longrightarrow 0$ of $L$ and $S$ is an ideal of $F$ with $S/R=N$. Now consider the natural map $\sigma$ and $\ker \sigma=\frac{[S,F]}{[R,F]}$. Thus, we need to check $N \subseteq Z^{*}(L)$ if and only if $Ker(\sigma)=0$, i.e $[S,F]=[R,F]$. Set, $\overline{F}=F/[R,F],~~\overline{R}=R/[R,F],~~\overline{S}=S/[R,F]$, then $[S,F]=[R,F]$ if and only if $\overline{S}\subseteq Z(\overline{F})$. By Corollary \ref{cor1}, $Z^{*}(L)=\overline{\pi}(Z(\overline{F}))$. Thus, $\overline{\pi}(\overline{S})\subseteq Z^{*}(L)$ if and only if  $\overline{S}\subseteq Z(\overline{F})$. But $\overline{\pi}(\overline{S})=N$ and the result follows. 
\end{proof}
\smallskip

Here we want to emphasize that unlike the case of Lie algebra it is not sufficient that dimension of $\mathcal{M}(L)$ less than equals to dimension of $\mathcal{M}(L/N)$ for a central ideal $N$ imply $\mathcal{M}(L)\overset{\sigma}\longrightarrow  \mathcal{M}(L/N)$ is a monomorphism. For instance let us consider the following example.
\par

 Consider the Heisenberg Lie superalgebra with odd center $H_{1} \cong <x, y , z \mid [x , y]=z>$.  We intend to give a free presentation of $H_{1}$ and there by compute multiplier of $H_{1}$. Consider a graded set $X=X_{\bar{0}} \oplus X_{\bar{1}}$, where $X_{\bar{0}}=\{x\}$ and $X_{\bar{1}}=\{y\}$. Let $F$ be the free Lie superalgebra generated by $X$
 given by\\
\[F=\big<x, y,[x, y],[y, y],[x,[x, y]],[x,[y, y]],[y,[x, y]],\ldots \big>\]
and
\[R=\big<[y, y],[x,[x, y]],[x,[y, y]],[y,[x, y]],\ldots \big>\]. Now
\[ [F,R]=\big<[x,[y,y]],\ldots \big>.\]
Clearly, $0 \longrightarrow R \longrightarrow F \longrightarrow H_{1} \longrightarrow 0$  is the free presentation of $H_{1}$.
Further
\[ F'= [F,F]=\big<[x, y],[y, y],[x,[x, y]],[x,[y, y]],[y,[x, y]],\ldots \big>,\] 

\[ F'\cap R =\big<[y, y],[x,[x, y]],[x,[y, y]],[y,[x, y]],\ldots \big>\]

\[ \mathcal{M}(H_{1})= (F'\cap R)/ [F,R] =\big<[y, y],[x,[x, y]],[y,[x, y]] \big>.\]

Using graded Jacobi identity,  $-[y, [x, y]]+[x, [y, y]]+[y,[y, x]]=0$. But  as $[x, [y, y]] \in [F, R]$, implies $[y, [x, y]]=0$. Therefore $\{[y, y],[x,[x, y]] \}$ is a basis of $\mathcal{M}(H_{1})$.

Let $N=H_{1}'=<[x, y]>$ be the central ideal of $H_{1}$. 
Consider
\[S= \big<[x, y],[y, y],[x,[x, y]],[x,[y, y]],[y,[x, y]],\ldots \big>,\]
then $S\supseteq R$ is a graded ideal of $F$ such that $N=S/R$. Hence
$0 \longrightarrow S \longrightarrow F \longrightarrow H_{1}/N \longrightarrow 0$  is the free presentation of $H_{1}/N$. So
\[ [S,F]= \big<[x,[x, y]],[x,[y, y]],[y,[x, y]],\ldots \big>,\]
$S\cap F'= S$ and
\[\mathcal{M}(H_{1}/N)=(S\cap F')/[S,F]=\big<[x, y],[y, y] \big>.\] 
So $\dim \mathcal{M}(L)=\dim \mathcal{M}(H_{1}/N)=(1 \mid 1)$. But $\mathcal{M}(H_{1}) \nsubseteq \mathcal{M}(H_{1}/N)$ and $N\cap H_{1}'=\big<[x, y] \big>$. Hence $\mathcal{M}(H_{1}) \longrightarrow \mathcal{M}(H_{1}/N)$ is not a monomorphism.

\smallskip

Epicenter of a Lie superalgebra $L$ is denoted as $Z^{\wedge}(L)$ and is defined as
\[Z^{\wedge}(L)=\{x \in L \mid x \wedge y =0, ~\forall~ y \in L \}.\]
 For a given graded ideal $N$ of $L$  we have the following. 

\begin{lemma}\label{lemma3}
Let $N$ be an graded ideal of Lie superalgebra $L$ then,
\begin{equation*}
L\wedge N\longrightarrow L\wedge L \longrightarrow L/N\wedge L/N \longrightarrow 0,
\end{equation*}
is an exact sequence.
\end{lemma}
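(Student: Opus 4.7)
The plan is to derive this exact sequence as the exterior-product analogue of the tensor-product exact sequence in Proposition \ref{prop4}. Specifically, I would apply Proposition \ref{prop4} to the short exact sequence of pairs $(0,0) \longrightarrow (N,N) \longrightarrow (L,L) \longrightarrow (L/N, L/N) \longrightarrow (0,0)$, which yields the exact sequence of Lie superalgebras
\[(N \otimes L) \rtimes (L \otimes N) \overset{\alpha}{\longrightarrow} L \otimes L \overset{\phi \otimes \psi}{\longrightarrow} (L/N) \otimes (L/N) \longrightarrow 0.\]
Here the actions of $N$ on $L$ (and on itself) are the ones induced by the Lie bracket of $L$, which are compatible because $N$ is a graded ideal of $L$.

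Next I would pass to exterior products by quotienting each term by the appropriate $\square$-submodule from Lemma \ref{lem4b}. Regarding $L$ as the crossed $L$-module $(L,\mathrm{id})$ and $N$ as the crossed $L$-module $(N,\iota)$ with $\iota : N \hookrightarrow L$ the inclusion, the canonical map $L \otimes N \to L \otimes L$ induced by $\iota$ sends the defining generators of $L \square N$ (as in Lemma \ref{lem4b}) into $L \square L$, and the projection $L \otimes L \to (L/N) \otimes (L/N)$ sends $L \square L$ into $(L/N) \square (L/N)$. Consequently one obtains induced Lie superalgebra homomorphisms $L \wedge N \to L \wedge L$ and $L \wedge L \to (L/N) \wedge (L/N)$, fitting into a commutative diagram with the tensor-level sequence.

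It remains to check exactness of the bottom row. Surjectivity of $L \wedge L \to (L/N) \wedge (L/N)$ is immediate from the surjectivity of the tensor-level map composed with the canonical projection $L \otimes L \twoheadrightarrow L \wedge L$. For exactness at $L \wedge L$, the kernel of $L \otimes L \to (L/N) \otimes (L/N)$ is by Proposition \ref{prop4} the image of $\alpha$, i.e., the submodule generated by all $l \otimes n$ and $n \otimes l$ with $l \in L$, $n \in N$. After projecting to $L \wedge L$ and using the symmetry isomorphism $N \wedge L \cong L \wedge N$ (Proposition \ref{prop3} and its exterior analogue) to identify the generators $n \wedge l$ with $-(-1)^{|n||l|}\, l \wedge n$, this image is precisely the image of $L \wedge N \to L \wedge L$.

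The main technical obstacle is verifying that forming images and kernels commutes with passage to the $\square$-quotients, or equivalently, that nothing in $L \square L$ pulls back to create extra kernel elements in $L \wedge L$ beyond the image of $L \wedge N$. This follows from the naturality of the $\square$ construction under the Lie superalgebra homomorphisms $\iota : N \hookrightarrow L$ and $L \twoheadrightarrow L/N$, together with a routine check on the generators listed in Lemma \ref{lem4b}; one sees directly that each generator of $(L/N)\square (L/N)$ lifts to a generator of $L \square L$, so that the induced map on exterior products has no unwanted kernel.
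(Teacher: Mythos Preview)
Your proof is correct and follows essentially the same approach as the paper: the paper's proof simply notes that $0 \longrightarrow N \longrightarrow L \longrightarrow L/N \longrightarrow 0$ is short exact and invokes the general remark (stated just before Section~\ref{sec5}) that such a sequence induces the exterior exact sequence $N \wedge L \longrightarrow L \wedge L \longrightarrow (L/N) \wedge (L/N) \longrightarrow 0$ via Proposition~\ref{prop4}. You have spelled out in detail exactly the passage from the tensor-level sequence to the exterior-level sequence that the paper leaves implicit.
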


\begin{proof}
Evidently
$ 0\longrightarrow N \longrightarrow L \longrightarrow\frac{L}{N} \longrightarrow 0$ is a short exact sequence of $L$ and hence the result follows.
\end{proof}
\begin{corollary}\label{cor2}
$ N\subseteq Z^{\wedge}(L)$ if and only if the natural map $L\wedge L \longrightarrow L/N\wedge L/N$ is a monomorphism.
\end{corollary}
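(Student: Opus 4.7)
The plan is to derive the corollary directly from Lemma \ref{lemma3} together with the definition of the exterior center and the graded skew-symmetry of the wedge. Because the statement is an ``if and only if'' whose two halves collapse once the exact sequence is in hand, I would not seek a separate argument for each direction but rather rewrite both as a single translation problem: ``image of $L\wedge N\to L\wedge L$ is trivial'' versus ``$N\subseteq Z^{\wedge}(L)$''.

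First I would invoke Lemma \ref{lemma3}, which gives the exact sequence
\[
L\wedge N\longrightarrow L\wedge L\longrightarrow L/N\wedge L/N\longrightarrow 0.
\]
By exactness at $L\wedge L$, the natural map $L\wedge L\to L/N\wedge L/N$ is a monomorphism if and only if the image of the left-hand map is zero. Since $N$ is a graded ideal of $L$, this map is induced by the inclusion $N\hookrightarrow L$, hence its image inside $L\wedge L$ is the graded submodule generated by all pure tensors $l\wedge n$ with $l\in L_{\bar0}\cup L_{\bar1}$ and $n\in N_{\bar0}\cup N_{\bar1}$.

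Next I would translate the vanishing of this image into the exterior-center condition. Using the graded anticommutativity of the wedge, $l\wedge n=-(-1)^{|l||n|}(n\wedge l)$, so every generator $l\wedge n$ vanishes in $L\wedge L$ if and only if $n\wedge l=0$ in $L\wedge L$ for all homogeneous $l\in L$ and $n\in N$. By the very definition
\[
Z^{\wedge}(L)=\{x\in L\mid x\wedge y=0,\ \forall\,y\in L\},
\]
this is precisely the statement $N\subseteq Z^{\wedge}(L)$.

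The only point that requires a moment's care is that the image of $L\wedge N\to L\wedge L$ really is generated by the pure wedges $l\wedge n$ viewed in $L\wedge L$; this is immediate from the construction of the exterior product and the fact that $N$ is a sub-superalgebra on which $L$ acts compatibly via the bracket, so no serious obstacle arises. The whole argument is essentially a one-line consequence of Lemma \ref{lemma3} once one writes out what it means for the connecting map to have trivial image.
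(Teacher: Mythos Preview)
Your proof is correct and follows essentially the same approach as the paper: both use the exact sequence of Lemma \ref{lemma3} and identify injectivity of the natural map with the vanishing of the image of $L\wedge N\to L\wedge L$, which is then translated into the condition $N\subseteq Z^{\wedge}(L)$. Your version is in fact more carefully worded, as you explicitly invoke graded skew-symmetry to pass from $l\wedge n$ to $n\wedge l$, whereas the paper simply asserts $L\wedge N=0$ without spelling this out.
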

\begin{proof}
Suppose
$N \subseteq Z^{\wedge}(L)$. Then $L\wedge N \overset{\phi}\longrightarrow L\wedge L \overset{\phi'}\longrightarrow L/N \wedge L/N \longrightarrow 0$ is an exact sequence and also $L \wedge N=0$ implies $\phi'$ is a monomorphism. Conversely, suppose $\phi'$ is a monomorphism implies $\Ima \phi=0$ and hence $N \subseteq Z^{\wedge}(L)$.
\end{proof}
The lemma below give a connection between the Schur multiplier and exterior square of a Lie superalgebra.

\begin{lemma}\label{lemma4}
 Let $L$ be a Lie superalgebra. Then 
\[0\longrightarrow  ~\mathcal{M}(L) \longrightarrow L\wedge L \longrightarrow L^{\prime}\longrightarrow 0 \]
is a central extension.
\end{lemma}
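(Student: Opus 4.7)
The plan is to construct a commutator-type homomorphism $\kappa : L \wedge L \to L'$ by $l \wedge m \mapsto [l,m]$, show it is surjective, verify that its kernel is central via the crossed module structure on $L \wedge L \to L$, and identify this kernel with $\mathcal{M}(L)$ through a free presentation.

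First I would check that $\kappa$ is well defined on $L \otimes L$. The defining relations (1)--(4) of the non-abelian tensor product translate to identities in $L'$: relations (1) and (2) are immediate, relation (3) becomes the graded Jacobi identity, and relation (4) collapses via graded skew-symmetry applied to ${}^n m = [n, m]$. To descend to $L \wedge L$, I would check that the generators of $L \square L$ from Lemma \ref{lem4b} (with $\partial = \partial' = \mathrm{id}$) go to zero: $m \otimes n + (-1)^{|m||n|} n \otimes m$ maps to $[m,n] + (-1)^{|m||n|}[n,m] = 0$ by graded skew-symmetry, and $m_{\bar 0} \otimes m_{\bar 0}$ maps to $[m_{\bar 0}, m_{\bar 0}] = 0$ since $\mathrm{char}\,\mathbb{F} \neq 2$. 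Surjectivity onto $L' = [L, L]$ is then automatic, since every homogeneous bracket lies in the image of $\kappa$.

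Next, centrality of $\ker \kappa$ in $L \wedge L$ should follow from the crossed module property. As recalled after Proposition \ref{prop2}, the map $\mu : L \otimes L \to L$ is a crossed module of Lie superalgebras with the induced $L$-actions, and since $L \square L$ is a graded central ideal invariant under these actions, $\mu$ descends to a crossed module $\kappa : L \wedge L \to L$. The second crossed module axiom ${}^{\kappa(x)} y = [x, y]$ then forces $[x, y] = 0$ for every $x \in \ker \kappa$ and every $y \in L \wedge L$, so $\ker \kappa$ is central.

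Finally, to identify $\ker \kappa$ with $\mathcal{M}(L)$, I would take a free presentation $0 \to R \to F \overset{\pi}{\to} L \to 0$. The exterior analog of Proposition \ref{prop4} noted at the end of Section 4 gives an exact sequence $R \wedge F \to F \wedge F \to L \wedge L \to 0$, while for the free Lie superalgebra $F$ the commutator map $F \wedge F \to F'$ is an isomorphism (the multiplier of a free Lie superalgebra vanishes, and an inverse $F' \to F \wedge F$ can be built by lifting brackets via the universal property). The image of $R \wedge F$ in $F \wedge F \cong F'$ is $[R, F]$, so $L \wedge L \cong F'/[R, F]$; under this identification $\kappa$ becomes the natural map $F'/[R, F] \to F/R$ induced by inclusion, whose kernel is $(F' \cap R)/[R, F] \cong \mathcal{M}(L)$ by Lemma \ref{lem3.1}. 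The main obstacle is the clean identification $L \wedge L \cong F'/[R, F]$: establishing $F \wedge F \cong F'$ for free $F$ and computing the image of $R \wedge F$ both require careful bookkeeping with super signs, although the argument should parallel the Lie algebra case of \cite{Ellis1991, Ellis1995} once the grading conventions are tracked.
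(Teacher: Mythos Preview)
Your proposal is correct and follows essentially the same route as the paper: both arguments hinge on the identification $L\wedge L\cong F'/[R,F]$ obtained from the exact sequence $R\wedge F\to F\wedge F\to L\wedge L\to 0$ together with $F\wedge F\cong F'$ (which the paper cites from \cite[Proposition 6.3]{GKL2015}), after which $\ker\kappa=(F'\cap R)/[R,F]\cong\mathcal{M}(L)$ and $L'\cong (F'+R)/R$. The only real difference is in the centrality step: you argue via the crossed module axiom ${}^{\kappa(x)}y=[x,y]$ on $L\wedge L\to L$, whereas the paper checks directly at the free level that $(F'\cap R)/[R,F]$ is central in $F'/[R,F]$ and then transports the whole short exact sequence along the above identification; your version is marginally cleaner but neither approach adds anything the other lacks.
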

\begin{proof}
Let $F$ be a free Lie superalgebra on a graded set $X$.
Let \begin{equation}\label{eq2}
0 \longrightarrow  R \longrightarrow F \longrightarrow L \longrightarrow 0,
\end{equation} be a free presentation of Lie superalgebra $L$. Consider the map $\phi: \frac{F'}{[F,R]} \longrightarrow \frac{F'+R}{R}$ defined as $x+[F,R] \mapsto x+R$. Evidently $\phi$ is onto and $\ker \phi=\frac{F'\cap R}{[F,R]}$. Now $[x+[F,R], y+[F,R]]=[x,y]+[F,R]$ where $x \in F' \cap R$ and for all $y \in F'$. Here $[x,y] \in F' \cap R$ implies that $ x+[F,R] \in Z(F'/[F,R])$. Hence $$ 0\longrightarrow  \frac{F'\cap R}{[F,R]} \longrightarrow \frac{F'}{[F,R]} \longrightarrow \frac{F'+R}{R} \longrightarrow 0$$ is a central extension.

\smallskip

Now with given short exact sequence (\ref{eq2}) we have the exact sequence $R \wedge F \longrightarrow F \wedge F \longrightarrow L \wedge L \longrightarrow 0 $ and  also it is known that $F' \cong F \wedge F$ by \cite[Proposition 6.3]{GKL2015}. So $L \wedge L \cong \frac{F'}{[F,R]}$. We know $\mathcal{M}(L) \cong \frac{F'\cap R}{[F,R]}$ and as $L \cong F/R$ we have $L' \cong \frac{F'+R}{R}$. Hence    
$0\longrightarrow  ~\mathcal{M}(L) \longrightarrow L\wedge L \longrightarrow L^{\prime}\longrightarrow 0$ is a central extension.
\end{proof}
For an abelian superalgebra $A(m \mid n)$, one can easily conclude the following.
\begin{corollary}\label{cor3}
$ \mathcal{M}(A(m \mid n)) \cong A(m \mid n)\wedge A(m \mid n)$.
\end{corollary}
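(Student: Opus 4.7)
The plan is to read off the corollary directly from Lemma \ref{lemma4}, specialised to the abelian case. Concretely, I would apply that lemma with $L = A(m \mid n)$ to obtain the central extension
\[ 0 \longrightarrow \mathcal{M}(A(m \mid n)) \longrightarrow A(m \mid n) \wedge A(m \mid n) \longrightarrow A(m \mid n)^{\prime} \longrightarrow 0. \]
The point is then simply that $A(m \mid n)$ is abelian, so $A(m\mid n)^{\prime} = [A(m\mid n), A(m\mid n)] = 0$, and hence the right-hand term of the sequence is trivial.

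With a trivial right-hand term, the middle arrow becomes an isomorphism, giving
\[ \mathcal{M}(A(m \mid n)) \cong A(m \mid n) \wedge A(m \mid n) \]
as required. There is essentially no obstacle here: the entire content of the corollary is the vanishing of the derived subalgebra of an abelian Lie superalgebra, which is immediate from the definition. One could, if desired, cross-check the conclusion against the explicit formula in Theorem \ref{th3.3} for $\dim \mathcal{M}(A(m\mid n))$, but this is not needed for the proof itself.
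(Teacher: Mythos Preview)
Your argument is exactly what the paper intends: the corollary is stated immediately after Lemma~\ref{lemma4} with the remark that it follows easily for an abelian superalgebra, and your specialisation $L = A(m\mid n)$ with $A(m\mid n)' = 0$ is precisely that deduction. Nothing further is needed.
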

 
\section{Main results}
\label{sec}
Here we classify all nilpotent capable Lie superalgebras with dimension of derived subalgebra atmost one. We begin with establishing a relation between exterior center and epicenter of a Lie superalgebra.
\subsection{classification of capable nilpotent Lie superalgebra}
\begin{lemma}\label{lemma5}
For any finite dimensional Lie superalgebra $L$, we have $Z^{*}(L)=Z^{\wedge}(L).$
\end{lemma}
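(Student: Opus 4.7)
The plan is to prove the equality by mutual inclusion, exploiting the characterizations in Theorem \ref{th1} for $Z^{*}(L)$ and Corollary \ref{cor2} for $Z^{\wedge}(L)$. These two characterizations express the containment of a central graded ideal $N$ in the respective center by the injectivity of two natural maps: $\sigma\colon\mathcal{M}(L)\to\mathcal{M}(L/N)$ on multipliers, and $\lambda\colon L\wedge L\to L/N\wedge L/N$ on exterior squares. The bridge between them is the central extension of Lemma \ref{lemma4}. Fixing a free presentation $0\to R\to F\to L\to 0$ and lifting $N$ to $S\supseteq R$ in $F$, the descriptions $\mathcal{M}(L)\cong (F'\cap R)/[F,R]$ and $L\wedge L\cong F'/[F,R]$ used in the proof of Lemma \ref{lemma4} make the extension functorial under the projection $L\to L/N$, yielding a commutative diagram of short exact sequences of supermodules with vertical arrows $\sigma$, $\lambda$ and the canonical $\mu\colon L'\to (L/N)'$.

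For $Z^{\wedge}(L)\subseteq Z^{*}(L)$, I would set $N=Z^{\wedge}(L)$. Since $L\wedge L\to L'$ sends $x\wedge y$ to $[x,y]$, the condition $x\wedge y=0$ for all $y$ forces $x\in Z(L)$, so $Z^{\wedge}(L)$ is a central graded ideal and both characterizations apply. Corollary \ref{cor2} gives $\lambda$ injective, and the left square of the diagram immediately makes $\sigma$ injective, so Theorem \ref{th1} yields $N\subseteq Z^{*}(L)$.

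For $Z^{*}(L)\subseteq Z^{\wedge}(L)$, I would set $N=Z^{*}(L)$, which is central by the Remark after Lemma \ref{lemma0}. Theorem \ref{th1} gives $\sigma$ injective, and since $N$ is central the exact sequence of Proposition \ref{prop6}(3) collapses to the short exact sequence $0\to\mathcal{M}(L)\to\mathcal{M}(L/N)\to N\cap L'\to 0$. Combining this with the two rows of the diagram and the canonical identification $(L/N)'\cong L'/(N\cap L')$ gives, componentwise in the $\mathbb{Z}_{2}$-grading,
\[\dim(L\wedge L)=\dim\mathcal{M}(L)+\dim L'=\dim\mathcal{M}(L/N)+\dim(L/N)'=\dim(L/N\wedge L/N).\]
By Lemma \ref{lemma3} the map $\lambda$ is surjective, and a surjection between finite-dimensional supermodules of equal superdimension must be an isomorphism, hence injective. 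Corollary \ref{cor2} then yields $N\subseteq Z^{\wedge}(L)$, completing the proof.

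The main obstacle is not deep but organisational: one must verify that the central extension of Lemma \ref{lemma4} is truly natural in $L$, so that the induced map on multipliers coincides with the $\sigma$ appearing in Theorem \ref{th1} and Proposition \ref{prop6}, and that the superdimension count is respected in each $\mathbb{Z}_{2}$-component. The second point is automatic since $\sigma$ and $\lambda$ are even homomorphisms preserving the grading, while the first is straightforward from the common $F'/[F,R]$-description once compatible presentations of $L$ and $L/N$ are fixed.
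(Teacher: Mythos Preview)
Your proposal is correct and follows essentially the same route as the paper: both directions are obtained by passing between injectivity of $\sigma\colon\mathcal{M}(L)\to\mathcal{M}(L/N)$ and injectivity of $\lambda\colon L\wedge L\to L/N\wedge L/N$ via the central extension of Lemma~\ref{lemma4}, with the reverse inclusion established by the same superdimension count (the paper derives $\dim\mathcal{M}(L/Z^{*}(L))=\dim\mathcal{M}(L)+\dim(L'\cap Z^{*}(L))$ from Theorem~\ref{th1}(1), which is equivalent to your use of Proposition~\ref{prop6}(3)). Your write-up is in fact a bit more explicit than the paper's in spelling out the commutative diagram and the naturality check.
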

\begin{proof}
Consider $L \cong F/R$ for a free Lie superalgebra $F$ and a graded ideal $R$. Now $Z^{\wedge}(L)$ is a graded ideal of $L$ hence it is of the form $S/R$ for some graded ideal $S\supseteq R$ of $F$. Since, $Z^{\wedge}(L)$ is a central ideal of $L$ and $L \wedge Z^{\wedge}(L)=0$, so by Lemma \ref{lemma3} $L \wedge L \longrightarrow L/N \wedge L/N$ is an isomorphism. This along with Lemma \ref{lemma4} implies $[S,F]=[R,F]$ and hence $ \mathcal{M}(L)\longrightarrow \mathcal{M}(L/Z^{\wedge}(L))$ is a monomorphism and $Z^{*}(L)\supseteq Z^{\wedge}(L)$. To show the other containment, we show there exists a monomorphism from  $L \wedge L$ to $L/Z^{*}(L) \wedge L/Z^{*}(L)$. From the following equalities; 
\[\dim \mathcal{M}(L/Z^{*}(L))=\dim  \mathcal{M}(L)+\dim (L^{\prime} \cap Z^{*}(L)),\] 
\[\dim (L\wedge L) =\dim \mathcal{M}(L/Z^{*}(L))+\dim L^{\prime},\]
\[\dim (L/Z^{*}(L) \wedge L/Z^{*}(L))=\dim \mathcal{M}(L/Z^{*}(L))+\dim (Z^{*}(L))^{\prime}, \]
and from the isomorphism of Lie superalgebras 
\[ (L/Z^{*}(L))^{\prime} \cong (L^{\prime}+Z^{*}(L))/Z^{*}(L) \cong L^{\prime}/(Z^{*}(L) \cap L^{\prime}),\]
we have $\dim (L \wedge L) =\dim ( L/Z^{*}(L) \wedge L/Z^{*}(L))$, as desired.
\end{proof}
A Lie superalgebra is capable if and only if $Z^{\wedge}(L)=0$.

\begin{lemma}\label{lemma6}
\begin{enumerate}
\item $H(1 , 0)\wedge H(1 , 0) \cong A(3 \mid 0)$
\item $H(0 , 1)\wedge H(0 , 1)\cong A(1 \mid 0)$
\item $H(m , n)\wedge H(m , n)\cong A(r \mid s)$, where $r=(2m^{2}-m)+\dfrac{n(n+1)}{2}, s= 2mn$ and $m+n\geq 2$.
\item  $H_{1}\wedge H_{1} \cong A(1 \mid 2)$
\item  $H_{m}\wedge H_{m}\cong A(m^{2}|m^{2})$ for $m \geq 2$.
\end{enumerate}
\end{lemma}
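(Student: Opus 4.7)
The unifying observation is that every Heisenberg Lie superalgebra $L$ has $L' = Z(L)$ with $\dim L' = 1$, so $L$ is nilpotent of class two. Combined with the central extension furnished by Lemma~\ref{lemma4}, namely $0\longrightarrow \mathcal{M}(L)\longrightarrow L\wedge L\longrightarrow L'\longrightarrow 0$, this already fixes the dimension: $\dim(L\wedge L)=\dim \mathcal{M}(L)+\dim L'$. So the plan splits into two independent tasks: first show $L\wedge L$ is abelian (so that the extension forces $L\wedge L\cong A(r\mid s)$ for some $(r,s)$), and second plug in the multiplier dimensions from Theorems~\ref{th3.4} and~\ref{th3.6}.

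For the first task I would argue directly from the defining relations of the exterior product. A commutator of generators in $L\wedge L$ satisfies
\[
[l_1\wedge l_1',\, l_2\wedge l_2']= -(-1)^{|l_1||l_1'|}\bigl([l_1',l_1]\wedge [l_2,l_2']\bigr),
\]
so both slots of the resulting wedge lie in $L'=Z(L)$. It then suffices to show $z\wedge w=0$ whenever $z,w\in L'$. Writing $z=[a,b]$ and applying the relation $[a,b]\wedge w = a\wedge[b,w]-(-1)^{|a||b|}\,b\wedge[a,w]$, the right hand side vanishes because $w$ is central in $L$. (For $z$ even one may alternatively invoke the Peiffer relation $l_0\wedge l_0=0$, but the argument above handles both parities uniformly.) Hence $L\wedge L$ is abelian.

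The second task is bookkeeping. Since $L\wedge L$ is abelian, it is determined up to isomorphism by its superdimension, which by the central extension equals $\dim\mathcal{M}(L)+\dim L'$. Reading off the multiplier from Theorem~\ref{th3.4} and recalling that for special Heisenberg Lie superalgebras $\dim L' = (1\mid 0)$, we obtain $(2\mid 0)+(1\mid 0)=(3\mid 0)$ for $H(1,0)$, $(0\mid 0)+(1\mid 0)=(1\mid 0)$ for $H(0,1)$, and $(2m^2-m+n(n+1)/2-1\mid 2mn)+(1\mid 0)=(r\mid s)$ for $H(m,n)$ with $m+n\geq 2$, proving $(1)$--$(3)$. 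For the odd-centre case $\dim L'=(0\mid 1)$; combining this with Theorem~\ref{th3.6} gives $(1\mid 1)+(0\mid 1)=(1\mid 2)$ for $H_1$ and $(m^2\mid m^2-1)+(0\mid 1)=(m^2\mid m^2)$ for $H_m$ with $m\geq 2$, which are $(4)$ and $(5)$.

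The only subtle point is the verification that $L\wedge L$ is abelian; once that is in hand, each of the five isomorphisms follows immediately from the short exact sequence of Lemma~\ref{lemma4} and the known multiplier dimensions. No case-by-case analysis of the individual Heisenberg families is needed for the vanishing-of-brackets step, since it only uses the class-two property $L'\subseteq Z(L)$.
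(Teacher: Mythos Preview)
Your proof is correct and follows essentially the same route as the paper: establish that $L\wedge L$ is abelian, then read off its superdimension from the central extension of Lemma~\ref{lemma4} together with the known multiplier dimensions in Theorems~\ref{th3.4} and~\ref{th3.6}. The paper's own argument is terser on the abelianity step---it simply asserts that $L\wedge L$ is abelian by Lemma~\ref{lemma4}---whereas you spell out explicitly, via the bracket relation and the identity $[a,b]\wedge w = a\wedge[b,w]-(-1)^{|a||b|}\,b\wedge[a,w]$, why commutators of generators vanish when $L'\subseteq Z(L)$; this is a genuine clarification, since Lemma~\ref{lemma4} alone only yields $(L\wedge L)'\subseteq\mathcal{M}(L)$.
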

\begin{proof}
Since $\dim H(m \mid n)^{'}=(1 \mid 0)$, so by Lemma \ref{lemma4}, $H(m , n)\wedge H(m , n)$ is an abelian Lie superalgebra where $m+n \geq 1$. Now for $m+n \geq 2$,
\[\dim H(m , n)\wedge H(m , n)=\dim H(m , n)^{\prime}+\dim \mathcal{M} (H(m , n)),\]
i.e., \[\dim H(m , n)\wedge H(m , n)=(1 \mid 0)+(2m^{2}-m+\dfrac{1}{2}n^{2}+\dfrac{1}{2}n-1 \mid 2mn).\]
\[=(2m^{2}-m+\dfrac{1}{2}n^{2}+\dfrac{1}{2}n|2mn).\]
 By setting $r=2m^{2}-m+\dfrac{1}{2}n^{2}+\dfrac{1}{2}n$ and $s=2mn$ we have the desired result for $(3)$. If $m=1, n=0$ then $\dim H(1,0) \wedge H(1, 0)=(3 \mid 0)$ and if $m=0, n=1$ then $\dim H(0,1) \wedge H(0, 1)=(1 \mid 0)$. Again $H_{m} \wedge H_{m}$ is abelian for $m \geq 1$. If $m \geq 2$ we have $\dim H_{m} \wedge H_{m}=(m^{2} \mid m^{2})$ which gives $(5)$ and if $m=1$ then $\dim H_{1} \wedge H_{1}=(1 \mid 2)$.
\end{proof}
Now we show that all finite dimensional abelian Lie superalgebras are capable except when it is an one dimensional Lie algebra.
\begin{theorem}\label{th4}
$A(m \mid n)$ is capable if and only if $m=0, n=1$ or $m+n \geq 2$.
\end{theorem}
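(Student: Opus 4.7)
The plan is to reformulate capability in terms of the exterior center via Lemma \ref{lemma5} (so $L$ is capable iff $Z^{\wedge}(L)=0$), and then analyze $Z^{\wedge}(A(m\mid n))$ directly inside the exterior square, using Corollary \ref{cor3} to identify $A(m\mid n)\wedge A(m\mid n)$ with $\mathcal{M}(A(m\mid n))$, whose dimension is given explicitly by Theorem \ref{th3.3}.

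For the ``only if'' direction, the single case to rule out is $(m,n)=(1,0)$. Here Theorem \ref{th3.3} gives $\dim\mathcal{M}(A(1\mid 0))=(0\mid 0)$, so $A(1\mid 0)\wedge A(1\mid 0)=0$; thus every element of $A(1\mid 0)$ lies trivially in $Z^{\wedge}$, hence $Z^{\wedge}(A(1\mid 0))=A(1\mid 0)\neq 0$ and capability fails.

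For the ``if'' direction, I would fix a homogeneous basis $\{e_{1},\ldots,e_{m}\}$ of the even part and $\{f_{1},\ldots,f_{n}\}$ of the odd part of $A(m\mid n)$. First I would record a basis of $A(m\mid n)\wedge A(m\mid n)$, namely $\{e_{i}\wedge e_{j}:i<j\}\cup\{f_{i}\wedge f_{j}:i\leq j\}$ in the even sector and $\{e_{i}\wedge f_{j}\}$ in the odd sector, which follows from the relations $p\wedge q=-(-1)^{|p||q|}q\wedge p$ and $e_{i}\wedge e_{i}=0$ and matches the dimension formula in Theorem \ref{th3.3}. Then, taking an arbitrary $x=\sum a_{i}e_{i}+\sum b_{j}f_{j}\in Z^{\wedge}(A(m\mid n))$, I would test the vanishing conditions $x\wedge e_{k}=0$ (available when $m\geq 1$) and $x\wedge f_{l}=0$ (available when $n\geq 1$); reading off coefficients against the above basis forces all $a_{i}$ and $b_{j}$ to vanish.

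The main subtlety is the subcase $m=1$, where $x\wedge e_{1}=0$ only yields $b_{j}=0$ for all $j$ and gives no information on $a_{1}$; one must then invoke $x\wedge f_{l}=0$ (permissible because $m+n\geq 2$ forces $n\geq 1$) and use that $e_{1}\wedge f_{l}$ is a basis element to conclude $a_{1}=0$. A parallel point is needed in the subcase $m=0$, $n\geq 1$ (which includes the allowed value $(0,1)$): since for odd $f$ the element $f\wedge f$ is \emph{not} forced to be zero, the diagonal term $b_{l}(f_{l}\wedge f_{l})$ survives as a coefficient on a genuine basis element, which is precisely what allows one to extract $b_{l}=0$. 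These parity- and sign-bookkeeping checks constitute the only real content of the argument; everything else is an appeal to Lemma \ref{lemma5}, Corollary \ref{cor3} and Theorem \ref{th3.3}.
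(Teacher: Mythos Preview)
Your proposal is correct, but it proceeds along a different line from the paper. The paper does not compute $Z^{\wedge}(A(m\mid n))$ element-by-element; instead it invokes Corollary~\ref{cor2}: a graded ideal $J$ lies in $Z^{\wedge}(L)$ precisely when the natural map $L\wedge L\to L/J\wedge L/J$ is injective. Since for an abelian $L$ both sides are identified with Schur multipliers (Corollary~\ref{cor3}), the paper just compares the two numbers supplied by Theorem~\ref{th3.3} and observes that, for $m+n\geq 2$, the target has strictly smaller total dimension whenever $(r,s)\neq(0,0)$; hence no nonzero $J$ can sit inside $Z^{\wedge}(A(m\mid n))$. The cases $(1,0)$ and $(0,1)$ are handled by the same dimension comparison. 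Your approach is more hands-on: you pin down an explicit basis of $A(m\mid n)\wedge A(m\mid n)$ (justified by the spanning relations together with the dimension count from Theorem~\ref{th3.3}) and then kill the coefficients of an arbitrary $x\in Z^{\wedge}$ by wedging against basis vectors, with careful attention to the parity subtleties at $m=1$ and at $m=0$ (where the nonvanishing of $f\wedge f$ for odd $f$ is exactly what rescues the case $(0,1)$). Both arguments are short; the paper's is slicker because it never writes down a basis, while yours is more self-contained because it avoids the extra machinery of Corollary~\ref{cor2} and makes the super phenomenon (the survival of $f_l\wedge f_l$) visible.
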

\begin{proof}
 Consider $m=1$ and $n=0$, then $\dim \mathcal{M}(A(1 \mid 0))=0$. Clearly $\mathcal{M}(A(1 \mid 0))$ to $\mathcal{M}(A(1 \mid 0)/A(1 \mid 0))$ is a trivial monomorphism, we get $A(1 \mid 0) \subseteq Z^{\wedge}(A(1|0))$, implies $A(1|0)$ is not capable. For $m=0$ and $n=1$, $\dim (A(0 \mid 1)\wedge A(0 \mid 1))=\dim \mathcal{M}(A(0 \mid 1))=1$ and $\dim \mathcal{M}(A(0 \mid 1)/A(0 \mid 1))=0$. Thus there is no monomorphism from $A(0 \mid 1)\wedge A(0 \mid 1)$ to $A(0 \mid 1)/A(0 \mid 1)\wedge A(0 \mid 1)/A(0 \mid 1)$. Therefore $Z^{\wedge}(A(0 \mid 1))=\{0\}$ and $A(0 \mid 1)$ is capable. Let $m+n\geq 2$ and $J$ be any arbitrary ideal of $A(m \mid n)$ with $\dim J=(r \mid s)$. Thus, we have $\dim A(m \mid n)/J\wedge A(m \mid n)/J=\dim \mathcal{M}(A(m \mid n)/J)=\dfrac{1}{2}[(m+n-r-s)^{2}+(n-m+r-s)]$ and $\dim A(m \mid n)\wedge A(m \mid n)=\dim \mathcal{M}(A(m \mid n))=\dfrac{1}{2}[(m+n)^{2}+(n-m)]$. Now it follows that $A(m \mid n)\wedge A(m \mid n)$ to $A(m \mid n)/J\wedge A(m \mid n)/J$ is a monomorphism if and only if $r=s=0$, but which implies $Z^{\wedge}(A(m \mid n))=\{0\}$.
\end{proof}
Below we give a criterion on capability of Heisenberg Lie superalgebra with even center.
\begin{theorem}\label{th5}
$H(m,n)$ is capable if and only if $m=1, n=0$.
\end{theorem}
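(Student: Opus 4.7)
The plan is to exploit the equivalence, obtained by combining Lemma~\ref{lemma5} with Lemma~\ref{lemma0}, that $H(m,n)$ is capable if and only if $Z^{\wedge}(H(m,n))=0$. The central extension of Lemma~\ref{lemma4} shows that the surjection $L\wedge L\to L'$ sends $x\wedge y$ to $[x,y]$, so $Z^{\wedge}(L)\subseteq Z(L)$. Since $Z(H(m,n))=\langle z\rangle$ is one-dimensional, the whole question reduces to deciding whether the central generator $z$ belongs to $Z^{\wedge}(H(m,n))$.

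To test this, I would apply Lemma~\ref{lemma3} to the central ideal $Z=\langle z\rangle$, giving the exact sequence
\[
H(m,n)\wedge Z \;\longrightarrow\; H(m,n)\wedge H(m,n) \;\longrightarrow\; \bigl(H(m,n)/Z\bigr)\wedge\bigl(H(m,n)/Z\bigr) \;\longrightarrow\; 0.
\]
The quotient $H(m,n)/Z$ is the abelian superalgebra $A(2m\mid n)$, so by Corollary~\ref{cor3} together with Theorem~\ref{th3.3},
\[
\dim\bigl((H(m,n)/Z)\wedge (H(m,n)/Z)\bigr)=\bigl(2m^{2}-m+\tfrac{n(n+1)}{2}\,\bigm|\,2mn\bigr).
\]
A direct comparison with Lemma~\ref{lemma6} shows that this dimension matches $\dim\bigl(H(m,n)\wedge H(m,n)\bigr)$ whenever $m+n\ge 2$, and also in the boundary case $(m,n)=(0,1)$ where both exterior squares have dimension $(1\mid 0)$. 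In each such case the surjection above is forced to be an isomorphism by dimension count, so the image of $H(m,n)\wedge Z$ is zero. Equivalently $y\wedge z=0$ for every $y$, so $z\in Z^{\wedge}(H(m,n))$ and $H(m,n)$ is not capable.

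For the remaining case $(m,n)=(1,0)$ the dimensions no longer match: Lemma~\ref{lemma6} gives $H(1,0)\wedge H(1,0)\cong A(3\mid 0)$ while $A(2\mid 0)\wedge A(2\mid 0)\cong A(1\mid 0)$. The surjection therefore has a two-dimensional kernel, which by exactness coincides with the image of $H(1,0)\wedge Z\to H(1,0)\wedge H(1,0)$. Consequently there exists $y\in H(1,0)$ with $y\wedge z\neq 0$, so $z\notin Z^{\wedge}(H(1,0))$; since $Z^{\wedge}$ sits inside the one-dimensional centre, $Z^{\wedge}(H(1,0))=0$ and $H(1,0)$ is capable.

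The argument is essentially dimensional bookkeeping once Lemma~\ref{lemma3}, Corollary~\ref{cor3} and Lemma~\ref{lemma6} are available, so there is no serious conceptual obstacle. The delicate point is the piecewise nature of the dimension formulas in Theorem~\ref{th3.4} and Lemma~\ref{lemma6}: one must verify the exceptional small cases $(m,n)=(0,1)$ and $(m,n)=(1,0)$ by hand, since these lie outside the generic range $m+n\ge 2$ and are precisely what determines on which side of the dichotomy they fall.
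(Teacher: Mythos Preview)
Your argument is correct and follows essentially the same route as the paper: both proofs reduce the question to whether the natural surjection $H(m,n)\wedge H(m,n)\to (H(m,n)/Z)\wedge(H(m,n)/Z)$ is an isomorphism, and decide this by a dimension count using Lemma~\ref{lemma6} and Theorem~\ref{th3.3}. The only cosmetic differences are that the paper invokes Corollary~\ref{cor2} rather than Lemma~\ref{lemma3} directly, and that for the case $(m,n)=(1,0)$ it simply cites \cite[Theorem~3.4]{PMF2013}, whereas you carry out the dimension comparison explicitly.
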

\begin{proof}
As $H(1,0)$ is Heisenberg Lie algebra, thus $H(1,0)$ is capable \cite[Theorem 3.4]{PMF2013}.
 For $m=0,n=1$, then $H(0,1)\wedge H(0,1)$ is abelian, so  $\dim H(0,1)\wedge H(0,1)=1$. Consider $J$ be the derived subalgebra of $H(0,1)$ then $H(0,1)/J \cong A(0\mid1)$ implies $\dim H(0,1)/J\wedge H(0,1)/J=1$. Thus there is a monomorphism $H(0,1)\wedge H(0,1)\longrightarrow H(0,1)/J \wedge H(0,1)/J$. Hence $J\subseteq Z^{\wedge}(H(0,1))$ and $H(0,1)$ is not capable. Assume $J=H(m,n)' \subseteq Z^{\wedge}(H(m,n))$. For $m+n\geq 2$ we have $\dim H(m,n)\wedge H(m,n)= (2m^2-m+\frac{n(n+1)}{2}\mid 2mn)$. Further $H(m,n)/J\cong A(2m \mid n)$. Hence $\dim H(m,n)/J\wedge H(m,n)/J=(2m^2+n^2+\frac{1}{2} n-m \mid 2mn)$ implies $Z^{\wedge}(H(m,n))=J$. Thus $H(m,n)$ is not capable when $m+n\geq 2$.
\end{proof}
\begin{theorem}\cite[Proposition 3.4]{SN2018b} \label{th5a}
Let $L$ be a nilpotent Lie superalgebra of dimension $(k \mid l)$ with $\dim L'=(r \mid s)$, where $r+s=1$. If $r=1, s=0$ then $L \cong H(m,n)\oplus A(k-2m-1 \mid l-n)$ for $m+n\geq 1$. If $r=0, s=1$ then $L \cong H_{m} \oplus A(k-m \mid l-m-1)$. 
\end{theorem}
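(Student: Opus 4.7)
The plan is to first show that nilpotency together with $\dim L'=(r\mid s)$, $r+s=1$, forces $L'\subseteq Z(L)$, and then to classify $L$ by putting the resulting bilinear bracket (valued in the one-dimensional space $L'=\langle z\rangle$) into a normal form. For the centrality claim I would argue that $[L,L']=L^{3}\subseteq L^{2}=L'$ lies in a one-dimensional subspace, so if non-zero it coincides with $L'$, giving $L^{n}=L'$ for every $n\geq 2$, contradicting nilpotency. Hence $z\in Z(L)$, and the proof splits according to the parity of $z$.

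In the case $r=1$, $s=0$ (even center), since $[L_{\bar 0},L_{\bar 1}]\subseteq L_{\bar 1}\cap L'=0$, the even and odd parts annihilate each other. The bracket restricts to an alternating form $\omega_{0}\colon L_{\bar 0}\times L_{\bar 0}\to\mathbb{F}z$ and, because the bracket of two odd elements is symmetric, to a symmetric form $\omega_{1}\colon L_{\bar 1}\times L_{\bar 1}\to\mathbb{F}z$. Passing to $L_{\bar 0}/\langle z\rangle$, the symplectic normal form will yield a basis $x_{1},\ldots,x_{2m},u_{1},\ldots,u_{k-2m-1}$ with $[x_{i},x_{m+i}]=z$ and the $u_{i}$ in the radical of $\omega_{0}$. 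Diagonalizing $\omega_{1}$ in characteristic $\neq 2$ and rescaling yields $y_{1},\ldots,y_{n},v_{1},\ldots,v_{l-n}$ with $[y_{j},y_{j}]=z$ and the $v_{j}$ in the radical of $\omega_{1}$. By Theorem \ref{th3.4} the span of $z,x_{i},y_{j}$ is precisely $H(m,n)$ (with $m+n\geq 1$ since $L'\neq 0$), while the span of the $u_{i}$ and $v_{j}$ is a graded abelian ideal $A(k-2m-1\mid l-n)$ commuting with $H(m,n)$, giving $L\cong H(m,n)\oplus A(k-2m-1\mid l-n)$.

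In the case $r=0$, $s=1$ (odd center), both $[L_{\bar 0},L_{\bar 0}]\subseteq L_{\bar 0}\cap L'=0$ and $[L_{\bar 1},L_{\bar 1}]\subseteq L_{\bar 0}\cap L'=0$, so only the mixed bracket is non-trivial. Writing $L_{\bar 1}=\langle z\rangle\oplus W$, the bracket induces a bilinear pairing $\beta\colon L_{\bar 0}\times W\to\mathbb{F}z$. The standard canonical form for a bilinear pairing between two vector spaces will produce dual systems $x_{1},\ldots,x_{m}\in L_{\bar 0}$ and $y_{1},\ldots,y_{m}\in W$ with $[x_{i},y_{j}]=\delta_{ij}z$, together with radicals $u_{1},\ldots,u_{k-m}\subseteq L_{\bar 0}$ and $v_{1},\ldots,v_{l-m-1}\subseteq W$. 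By Theorem \ref{th3.6} the span of $x_{i},y_{j},z$ is $H_{m}$ (with $m\geq 1$ since $L'\neq 0$), the remaining vectors span a graded abelian ideal commuting with $H_{m}$, and therefore $L\cong H_{m}\oplus A(k-m\mid l-m-1)$.

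The main obstacle I anticipate is the rescaling step that normalizes $[y_{j},y_{j}]$ to $z$ in the first case: while a symmetric bilinear form over any field of characteristic $\neq 2$ is diagonalizable, absorbing the diagonal entries requires square roots in $\mathbb{F}$, which is automatic over algebraically closed fields (the usual setting for Heisenberg superalgebra classifications). Once this is handled, the decomposition into a Heisenberg summand and an abelian complement follows directly from the fact that the chosen radical elements commute with everything by construction, so the sum is an internal direct sum of graded Lie superalgebras.
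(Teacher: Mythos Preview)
The paper does not provide its own proof of this statement; it is quoted verbatim from \cite[Proposition 3.4]{SN2018b} and used as a black box, so there is nothing in the present paper to compare your argument against.

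That said, your outline is the standard and correct route to this classification. The key steps---deducing $L'\subseteq Z(L)$ from nilpotency and $\dim L'=1$, then reading the bracket as a bilinear form on $L/L'$ with values in $\mathbb{F}z$ and applying the appropriate normal form (symplectic on $L_{\bar 0}/\langle z\rangle$ plus diagonalization of the symmetric form on $L_{\bar 1}$ in the even-center case; the rank normal form for the $L_{\bar 0}\times L_{\bar 1}$ pairing in the odd-center case)---are exactly what one finds in \cite{MC2011} and \cite{SN2018b}. Your parity bookkeeping ($[L_{\bar 0},L_{\bar 1}]=0$ when $z$ is even, $[L_{\bar 0},L_{\bar 0}]=[L_{\bar 1},L_{\bar 1}]=0$ when $z$ is odd) is correct.

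You have also correctly identified the one genuine constraint: the rescaling $[y_j,y_j]=\lambda_j z\mapsto z$ requires $\sqrt{\lambda_j}\in\mathbb{F}$, so the stated isomorphism type $H(m,n)$ in the even-center case is only canonical over a field in which every element is a square (e.g.\ algebraically closed). Over a general field of characteristic $\neq 2,3$ one would instead get a family parametrized by square classes, but the cited references implicitly work over such fields, so this is not a gap in your argument relative to the intended setting.
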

So now we give criterion on capability of nilpotent Lie superalgebras of nilpotency class one.
\begin{theorem}\label{th6}
For any value of $r+s\geq1$, the nilpotent Lie superalgebra $H(m , n)\oplus A(r \mid s)$ is capable if and only if $m=1$, $n=0$.
\end{theorem}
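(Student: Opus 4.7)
The plan is to compute the exterior center $Z^{\wedge}(L)$ of $L = H(m,n) \oplus A(r \mid s)$ explicitly and then invoke Lemma \ref{lemma5} together with the capability criterion of Lemma \ref{lemma0}. Throughout set $H = H(m,n)$ and $A = A(r \mid s)$, so that $A' = 0$, $H' = \langle z \rangle$ is one-dimensional, and both $A/A' = A$ and $H/H'$ are nonzero (because $r + s \geq 1$ by hypothesis and the Heisenberg assumption forces $m + n \geq 1$).

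First I would apply Theorem \ref{th3}, which identifies
\[
L \wedge L \;\cong\; (H \wedge H) \;\oplus\; (A \wedge A) \;\oplus\; \bigl(H/H' \otimes A/A'\bigr).
\]
Expanding $(h,a) \wedge (h',a')$ bilinearly and projecting onto each summand, the $H \wedge H$ and $A \wedge A$ components are $h \wedge h'$ and $a \wedge a'$, while the mixed component $h \wedge a' + a \wedge h'$ sits in $H/H' \otimes A/A'$ as
\[
(h + H') \otimes (a' + A') \;-\; (-1)^{|a||h'|}\,(h' + H') \otimes (a + A'),
\]
after rewriting $a \wedge h'$ via graded anti-symmetry. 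Requiring this expression to vanish for every $(h', a') \in L$ gives three independent conditions. The first two yield $h \in Z^{\wedge}(H)$ and $a \in Z^{\wedge}(A)$. Specializing $h' = 0$ in the mixed term and using that $A/A' \neq 0$ forces $h + H' = 0$, i.e. $h \in H'$; specializing $a' = 0$ and using $H/H' \neq 0$ forces $a \in A' = 0$. Collecting these,
\[
Z^{\wedge}(L) \;=\; \bigl(H' \cap Z^{\wedge}(H)\bigr) \oplus \{0\}.
\]

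Since $H'$ is one-dimensional, $H' \cap Z^{\wedge}(H)$ is either zero or equal to $H'$. When $(m,n) = (1,0)$, Theorem \ref{th5} tells us $H(1,0)$ is capable, so $Z^{\wedge}(H(1,0)) = 0$ by Lemmas \ref{lemma5} and \ref{lemma0}, giving $Z^{\wedge}(L) = 0$ and hence capability of $L$. When $(m,n) \neq (1,0)$ (with $m + n \geq 1$), the proof of Theorem \ref{th5} already establishes $H' \subseteq Z^{\wedge}(H(m,n))$, by comparing dimensions to show that the natural map $H \wedge H \to (H/H') \wedge (H/H')$ is an isomorphism and then invoking Corollary \ref{cor2}. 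In that case $Z^{\wedge}(L) = H' \neq 0$, so $L$ is not capable.

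I expect the principal obstacle to be the sign bookkeeping in the mixed term of $L \wedge L$: one has to verify that $h \wedge a'$ and $a \wedge h'$ genuinely pair into a single element of $H/H' \otimes A/A'$ under the isomorphism of Theorem \ref{th3}, and that zeroing $h'$ or $a'$ in turn cleanly isolates each of the three conditions. Everything else is a direct appeal to Theorems \ref{th3} and \ref{th5} combined with the characterizations of capability supplied by Lemmas \ref{lemma0} and \ref{lemma5}.
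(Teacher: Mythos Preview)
Your proposal is correct and arrives at the same conclusion, but by a genuinely cleaner route than the paper.

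The paper argues case by case: it splits into six cases according to whether $(m,n)$ equals $(1,0)$, $(0,1)$, or satisfies $m+n\ge 2$, and whether $(r,s)$ equals $(1,0)$ or has $r+s\ge 1$ (with $A$ capable or not). In each case it first uses only the \emph{inclusion} $Z^{\wedge}(H\oplus A)\subseteq Z^{\wedge}(H)\oplus Z^{\wedge}(A)$ from Theorem~\ref{th3}, then picks a candidate ideal $I$ (either $H(m,n)'$ or $A(1\mid 0)$), computes $\dim(L\wedge L)$ and $\dim(L/I\wedge L/I)$ explicitly, and invokes Corollary~\ref{cor2} to decide whether $I\subseteq Z^{\wedge}(L)$.

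You instead exploit the full \emph{decomposition} of $L\wedge L$ in Theorem~\ref{th3}, reading off the mixed summand $H/H'\otimes A$ and using the nonvanishing of $H/H'$ and $A$ to pin down $Z^{\wedge}(L)$ exactly as $H'\cap Z^{\wedge}(H)$. This single identity replaces all six cases at once and reduces the theorem directly to Theorem~\ref{th5}. What your approach buys is uniformity and transparency: the computation is sign-bookkeeping rather than a dimension chase, and it makes visible \emph{why} the abelian factor never contributes to $Z^{\wedge}(L)$ once $r+s\ge 1$. What the paper's approach buys is independence from the fine structure of the isomorphism in Theorem~\ref{th3}: it only needs the containment $Z^{\wedge}(H\oplus A)\subseteq Z^{\wedge}(H)\oplus Z^{\wedge}(A)$ and raw dimension counts, so one never has to track how an element $(h,a)\wedge(h',a')$ actually decomposes. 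Your self-identified obstacle (verifying that the cross terms assemble into $H/H'\otimes A$ with the right signs) is exactly the price of your shortcut, and you have handled it correctly.
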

\begin{proof}
Let us consider the following cases
\begin{enumerate}
\item $m=1,n=0;~ r=1,s=0$
\item $m=1,n=0; ~r = 0,s =1$, or $r+s \geq 2$
\item $m=0,n=1; ~r=1,s=0$
\item $m=0,n=1; ~r=0, s=1$, or $r+s \geq 2$
\item $m+n\geq2; ~r=1,s=0$
\item $m+n\geq2; ~r+s \geq 1$.
\end{enumerate}

{\bf Case-1:} Suppose $L\cong H(1 , 0)\oplus A(1 \mid 0)$. Then using Theorem \ref{th3}
\[Z^{\wedge}(L)\subseteq Z^{\wedge}\big(H(1 , 0) \big)\oplus Z^{\wedge} \big(A(1 \mid 0) \big)=A(1 \mid 0).\]
Now  $\dim Z^{\wedge}(L)$ is either $0$ or $1$. But, in one hand; 
\[\dim \mathcal{M}(L)=\dim \mathcal{M}\big(H(1,0)\big)+\dim \mathcal{M}\big(A(1 \mid 0)\big)+\dim H(1 , 0)/H(1 , 0)'\otimes A(1 \mid 0)
=2+0+2=4.\]
On the other hand, $\dim \mathcal{M}(L/A(1 \mid 0))=\dim \mathcal{M}(H(1 , 0))=2$. So, $Z^{\wedge}(L)\nsupseteq A(1 \mid 0)$, which implies $Z^{\wedge}(L)=0$.\\

{\bf Case-2:}  Suppose $L\cong H(1 , 0)\oplus A(r \mid s)$, where $r = 0, s = 1$ or $r+s \geq 2$. Then 
\[Z^{\wedge}(L)\subseteq Z^{\wedge}\big(H(1 , 0) \big)\oplus Z^{\wedge} \big(A(r \mid s) \big)=\{0\}.\]
Therefore $Z^{\wedge}(L)=\{0\}$.\\

{\bf Case-3:} If $L\cong H(0 , 1)\oplus A(1 \mid 0)$, then by a similar argument, we can have
\[\dim \mathcal{M}(L)=\dim \mathcal{M}\big(H(0 , 1)\big)+\dim \mathcal{M}\big(A(1 \mid 0)\big)+\dim~H(0 , 1)/H(0 , 1)'\otimes A(1 \mid 0)
=(0 \mid 1).\]
Denote $H(0 , 1)'=I$, so $L/I \cong H(0 , 1)/I \oplus A(1 \mid 0)$, thus $\dim\mathcal{M} (L/I)=\dim \mathcal{M}(A(0 \mid 1)\oplus A(1 \mid 0))=(1 \mid 1)$. Therefore there exist a monomorphism from $\mathcal{M}(L)$ to $\mathcal{M}(L/I)$,
Which forces to $Z^{\wedge}(L)\neq \{0\}$.\\

{\bf Case-4:} Consider $L \cong H(0 , 1)\oplus A(r\mid s)$ where $r=0, s=1$ or $r+s \geq 2$. So
\[Z^{\wedge}(L)\subseteq Z^{\wedge}\big(H(0 , 1) \big)\oplus Z^{\wedge} \big(A(r \mid s) \big)=H(0,1)'.\] Denote $H(0 , 1)'=I$, thus we have $L/I=H(0 , 1)/H(0 , 1)'\oplus A( \mid s)=A(0 \mid 1)\oplus A(r \mid s)$. Now it is easy to calculate that $\dim L\wedge L=\dim L/I\wedge L/I$. Therefore $Z^{\wedge}(L) \neq \{0\}$.

\bigskip

{\bf Case-5:} Consider $m+n\geq 2$ and $r=1,s=0$. Then $Z^{\wedge}(L)\subseteq H(m , n)'\oplus A(1 \mid 0)$. Let $I=H(m , n)'$.  Now $\dim L\wedge L=\dim L/I\wedge L/I=\dfrac{1}{2}[(2m+n)^{2}+(n-2m)]+(2m+n)$, as a result $Z^{\wedge}(L) \neq \{0\}$.

\bigskip

{\bf Case-6:} Consider $m+n \geq 2$ and $r+s \geq 1$,
\[Z^{\wedge}(L)\subseteq Z^{\wedge}\big(H(m , n) \big)\oplus Z^{\wedge} \big(A(r \mid s) \big)=H(m , n)'.\] 
\smallskip

 Clearly, $L/H(m , n)' \cong H(m , n)/H(m , n)'\oplus A(r \mid s) $. Hence by Theorem \ref{th3}; 
\[L/H(m , n)' \wedge L/H(m , n)' \cong \big(H(m , n)/H(m , n)' \wedge  H(m , n)/H(m , n)' \big) \oplus \big( A(r \mid s) \wedge A(r \mid s)\big)\]\[\oplus \big(H(m , n)/H(m , n)'\otimes A(r\mid s) \big).\]
Thus, 
\[\dim L/H(m , n)' \wedge L/H(m , n)'=\dfrac{1}{2}[(2m+n)^{2}+(n-2m)]+\dfrac{1}{2}[(r+s)^{2}+(s-r)]+(2m+n)(r+s).\]
Now,
\[\dim L \wedge~L=\dim H(m , n) \wedge H(m , n)+\dim A(r \mid s) \wedge A(r \mid s)+\dim H(m , n)/H(m , n)'\otimes A(r \mid s)\]
\[=(2m^{2}-m)+2mn+\dfrac{n(n+1)}{2}+\dfrac{1}{2}[(r+s)^{2}+(s-r)]+(2m+n)(r+s)\]
\[=\dfrac{1}{2}[(2m+n)^{2}-(2m-n)]+\dfrac{1}{2}[(r+s)^{2}+(s-r)]+(2m+n)(r+s).\]
As a result we get a monomorphism from $L\wedge L$ to $L/H(m , n)'\wedge L/H(m , n)'$, i.e $Z^{\wedge}(L)\neq \{0\}$.
\end{proof}
\smallskip

We give a criterion on capability of the odd Heisenberg Lie superalgebras.
\begin{theorem}\label{th6a}
$H_{m}$ is capable if and only if $m=1$.
\end{theorem}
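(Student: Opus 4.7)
The plan is to mirror the approach of Theorem~\ref{th6} (the even Heisenberg case) by computing superdimensions of exterior squares and invoking the exterior--center criterion.

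First observe that $Z^{\wedge}(H_m)$ is automatically contained in the ordinary center $Z(H_m)$: the commutator map $L \wedge L \to L'$ of Lemma~\ref{lemma4} sends $x \wedge y$ to $[x,y]$, so $x \wedge y = 0$ for all $y$ forces $[x,y] = 0$ for all $y$. From the presentation in Theorem~\ref{th3.6} one reads off $Z(H_m) = \langle z \rangle = H_m'$, which is one-dimensional (odd). Hence $Z^{\wedge}(H_m)$ is either $\{0\}$ or all of $H_m'$, and by Lemma~\ref{lemma0} together with Lemma~\ref{lemma5}, capability of $H_m$ reduces to deciding which alternative holds.

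By Corollary~\ref{cor2}, the inclusion $H_m' \subseteq Z^{\wedge}(H_m)$ is equivalent to the natural map
\[
H_m \wedge H_m \longrightarrow (H_m/H_m') \wedge (H_m/H_m')
\]
being a monomorphism. Lemma~\ref{lemma3} tells us this map is always surjective, so it is a monomorphism exactly when both sides have the same superdimension. Thus the whole argument boils down to a dimension count.

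Now compute: by Lemma~\ref{lemma4}, $\dim(H_m \wedge H_m) = \dim \mathcal{M}(H_m) + \dim H_m'$, so Theorem~\ref{th3.6} together with $\dim H_m' = (0 \mid 1)$ gives $\dim(H_m \wedge H_m) = (m^2 \mid m^2)$ when $m \geq 2$ and $(1 \mid 2)$ when $m=1$ (matching Lemma~\ref{lemma6}(4),(5)). On the target side, $H_m/H_m' \cong A(m \mid m)$, so Corollary~\ref{cor3} and Theorem~\ref{th3.3} give $\dim((H_m/H_m') \wedge (H_m/H_m')) = \dim \mathcal{M}(A(m \mid m)) = (m^2 \mid m^2)$. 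For $m \geq 2$ the two superdimensions agree, so the surjection is an isomorphism, $H_m' \subseteq Z^{\wedge}(H_m)$, and $H_m$ is not capable. For $m=1$ the odd parts differ ($2 \neq 1$), the surjection has nontrivial kernel, so $H_1' \not\subseteq Z^{\wedge}(H_1)$, forcing $Z^{\wedge}(H_1) = \{0\}$ and $H_1$ capable.

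The only subtlety is parity bookkeeping: one must note that $z$ is odd (it arises as $[x_j, y_j]$ with $|x_j|=\bar{0}$, $|y_j|=\bar{1}$), so $H_m'$ is purely odd and any graded subsuperspace of it is $\{0\}$ or all of $H_m'$; this is what reduces the problem to a single binary choice and makes the dimension count conclusive.
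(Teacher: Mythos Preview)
Your proof is correct and follows essentially the same approach as the paper's: both arguments reduce the question to a superdimension comparison between $H_m \wedge H_m$ and $(H_m/H_m') \wedge (H_m/H_m') \cong \mathcal{M}(A(m\mid m))$, finding equality $(m^2\mid m^2)$ for $m\geq 2$ and a mismatch $(1\mid 2)$ versus $(1\mid 1)$ for $m=1$. Your write-up is in fact a bit more careful than the paper's, since you explicitly justify $Z^{\wedge}(H_m)\subseteq Z(H_m)=H_m'$ via the commutator map and you invoke the surjectivity from Lemma~\ref{lemma3} to explain why equal dimensions force an isomorphism, whereas the paper simply asserts ``there is a monomorphism'' when the dimensions match.
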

\begin{proof}
Consider $m=1$, since  $H_{1}\wedge H_{1}\cong A(1 \mid 2)$ and $\dim H_{1}/I \wedge H_{1}/I=\dim A(1 \mid 1)\wedge A(1 \mid 1)=\dim \mathcal{M}(A(1 \mid 1))=(1 \mid 1)$, where $I=H'_{1}$, therefore there is no monomorphism from $H_{1}\wedge H_{1}$ to $H_{1}/I \wedge H_{1}/I$. So $Z^{\wedge}(H_{1}) = 0$ implies $H_{1}$ is capable.
\smallskip

Now let $m > 1$, then $\dim H_{m}\wedge H_{m}=(m^2 \mid m^2)$ and $\dim H_{m}/I \wedge H_{m}/I=\dim A(m \mid m)\wedge A(m \mid m)=\dim \mathcal{M}(A(m \mid m))=(m^2 \mid m^2)$, where $I=H'_{m}$. Therefore there is a monomorphism from $H_{m}\wedge H_{m}$ to $H_{m}/I \wedge H_{m}/I$. Thus $Z^{\wedge}(H_{m}) \neq \{0\}$. 
\end{proof}

\begin{theorem}\label{th7}
For any value of $r+s\geq1$, the nilpotent Lie superalgebra $H_{m}\oplus A(r \mid s)$ is capable if and only if $m=1$.
\end{theorem}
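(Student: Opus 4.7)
The plan is to mirror the proof of Theorem~\ref{th6} for the even-centered case, replacing $H(m,n)$ by $H_m$ throughout and invoking Theorem~\ref{th6a} in place of Theorem~\ref{th5}. The recurring tools will be the inclusion $Z^{\wedge}(L) \subseteq Z^{\wedge}(H_m) \oplus Z^{\wedge}(A(r \mid s))$ coming from Theorem~\ref{th3}, together with Corollary~\ref{cor2}, which says $N \subseteq Z^{\wedge}(L)$ iff the natural map $L \wedge L \to L/N \wedge L/N$ is injective. Since Lemma~\ref{lemma3} makes that map always surjective, equality of super-dimensions already suffices to confirm injectivity.

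For the ``if'' direction I assume $m=1$ and split by $(r,s)$. When $(r,s) = (0,1)$ or $r+s \geq 2$, Theorem~\ref{th4} gives $Z^{\wedge}(A(r \mid s)) = 0$ and Theorem~\ref{th6a} gives $Z^{\wedge}(H_1) = 0$, so the containment above collapses to $Z^{\wedge}(L) = 0$. The remaining sub-case $(r,s) = (1,0)$ needs a direct comparison: the containment only gives $Z^{\wedge}(L) \subseteq A(1 \mid 0)$, and since $A(1 \mid 0)$ is one-dimensional it suffices to rule out $A(1 \mid 0) \subseteq Z^{\wedge}(L)$. Assembling Theorem~\ref{th3}, Lemma~\ref{lemma6}(4), Corollary~\ref{cor3} and Theorem~\ref{th3.3} yields
\[\dim(L \wedge L) = (1 \mid 2) + (0 \mid 0) + (1 \mid 1) = (2 \mid 3),\]
whereas $L/A(1 \mid 0) \cong H_1$ gives $\dim(L/A(1 \mid 0) \wedge L/A(1 \mid 0)) = (1 \mid 2)$; the mismatch precludes a monomorphism, so $Z^{\wedge}(L) = 0$.

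For the ``only if'' direction I take $m \geq 2$ and aim to show $H_m' \subseteq Z^{\wedge}(L)$, which forces $Z^{\wedge}(L) \neq 0$ since $\dim H_m' = (0 \mid 1)$. By Theorem~\ref{th3}, Lemma~\ref{lemma6}(5), Corollary~\ref{cor3} and the abelian tensor identity $A(m \mid m) \otimes A(r \mid s) = A(m(r+s) \mid m(r+s))$,
\[\dim(L \wedge L) = \bigl( m^{2} + \tfrac{1}{2}(r^{2}+s^{2}+s-r) + m(r+s) \bigm| m^{2} + rs + m(r+s) \bigr),\]
while $L/H_m' \cong A(m+r \mid m+s)$ and Theorem~\ref{th3.3} give
\[\dim(L/H_m' \wedge L/H_m') = \bigl( \tfrac{1}{2}((m+r)^{2}+(m+s)^{2}+s-r) \bigm| (m+r)(m+s) \bigr).\]
The two super-dimensions agree via the identities $(m+r)^{2}+(m+s)^{2} = 2m^{2} + 2m(r+s) + r^{2}+s^{2}$ and $(m+r)(m+s) = m^{2} + m(r+s) + rs$; combined with the always-surjective natural map, this delivers the needed monomorphism and hence $H_m' \subseteq Z^{\wedge}(L)$.

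The main obstacle is the routine but error-prone super-dimension bookkeeping in the second direction, where one must carry even and odd components of the tensor of abelianizations, of the abelian multiplier, and of $H_m \wedge H_m$ simultaneously before the cancellation becomes visible. The small case $(r,s)=(1,0)$ in the $m=1$ analysis is a secondary nuisance, since there $A(r \mid s)$ is itself not capable and the general containment does not immediately close the argument, forcing the separate dimension computation above.
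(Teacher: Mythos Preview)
Your proof is correct and follows essentially the same strategy as the paper: use the containment $Z^{\wedge}(H_m\oplus A(r\mid s))\subseteq Z^{\wedge}(H_m)\oplus Z^{\wedge}(A(r\mid s))$ from Theorem~\ref{th3}, invoke Theorems~\ref{th4} and~\ref{th6a} to handle the $m=1$ cases (with the special $(r,s)=(1,0)$ sub-case treated by a direct dimension comparison), and for $m\geq 2$ show that $\dim(L\wedge L)=\dim(L/H_m'\wedge L/H_m')$ so that $H_m'\subseteq Z^{\wedge}(L)$ via Corollary~\ref{cor2}. The only noticeable difference is cosmetic: the paper splits the $m\geq 2$ analysis into two sub-cases ($r=1,s=0$ versus the rest) and compares only total dimensions, whereas you carry the full super-dimension $(p\mid q)$ and handle all $(r,s)$ with $r+s\geq 1$ in a single computation, which is marginally cleaner but not a different argument.
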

\begin{proof}
Consider the following cases;
\begin{enumerate}
\item $m=1;~r = 1, s = 0$

\item $m=1;~r = 0, s = 1$ or $r+s \geq 1$
\item $m \geq 2;~r = 1, s = 0$
\item $m \geq 2;~ r=0, s=1$ or $r + s \geq 2$.
\end{enumerate}

{\bf Case-1:} Let $L \cong H_{1}\oplus A(1 \mid 0)$. Then
\[Z^{\wedge}(L)\subseteq Z^{\wedge}(H_{1})\oplus Z^{\wedge}(A(1 \mid 0))=Z^{\wedge}(A(1 \mid 0))=A(1 \mid 0). \]
Now consider $I=A(1 \mid 0)$, then $\dim (L\wedge L)=\dim (H_{1}\oplus A(1 \mid 0) \wedge H_{1}\oplus A(1 \mid 0))=5$ and $\dim (L/I \wedge L/I)= \dim (H_{1} \wedge  H_{1})=3$. Therefore there is no monomorphism from $L\wedge L$ to $L/I \wedge L/I $. Thus $Z^{\wedge}(L)=\{0\}$.

\bigskip

{\bf Case-2:} Let $L \cong H_{1}\oplus A(r \mid s)$, where $r =0 , s = 1$ or $r+s \geq 2$. Then 
\[Z^{\wedge}(L)\subseteq Z^{\wedge}(H_{1})\oplus Z^{\wedge}(A(r \mid s))=\{0\}, \]
i.e $Z^{\wedge}(L)=\{0\}$.

\bigskip

{\bf Case-3:} Let $L \cong H_{m}\oplus A(1 \mid 0)$, where $m \geq 2 $. Then
  \[Z^{\wedge}(L)\subseteq Z^{\wedge}(H_{m})\oplus Z^{\wedge}(A(1 \mid 0))=H'_{m}\oplus A(1 \mid 0). \]
 Consider $I=H'_{m}$, then $dim (L\wedge L)= 2m^{2}+2m$ and $dim (L/I \wedge L/I)= 2m^{2}+2m$. Hence $Z^{\wedge}(L) \neq \{0\}$. 
 
\bigskip

{\bf Case-4:} Let $L \cong H_{m}\oplus A(r \mid s)$, where $m \geq 1;~r =0 , s=1$ or $r+s \geq 2$. Then 
\[Z^{\wedge}(L)\subseteq Z^{\wedge}(H_{m})\oplus Z^{\wedge}(A(r \mid s))=Z^{\wedge}(H_{m}))=H'_{m}. \]
 Now let $I=H'_{m}$, then  $\dim L\wedge L = 2m^{2} + \dfrac{1}{2}[(r+s)^{2}+(s-r)]+2m(r+s)$ and $\dim L/I \wedge L/I= \ dim \big(H_{m}/H'_{m}\oplus A(r \mid s)~\wedge ~H_{m}/H'_{m}\oplus A(r \mid s) \big)=2m^{2} + \dfrac{1}{2}[(r+s)^{2}+(s-r)]+2m(r+s)$. Thus $Z^{\wedge}(L) \neq \{0\}$.
 \end{proof}
All nilpotent Lie superalgebras with nilpotency class one are described by Theorem \ref{th5a}. Now, we give the classification of capable nilpotent Lie superalgebras whose derived subalgebra dimension is one.

\begin{theorem}\label{th8}
Let $L$ be a finite dimensional nilpotent Lie superalgebra with derived sualgebra  dimension is one. Then $L$ is capable if and only if either $L \cong H(1 , 0)\oplus A(k-3 \mid l)$ or $L \cong H_{1}\oplus A(k-1 \mid l-m-1)$.
\end{theorem}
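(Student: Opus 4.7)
The proof is essentially a case analysis built on the earlier structural result (Theorem \ref{th5a}) together with the capability criteria already established for Heisenberg Lie superalgebras and their direct sums with abelian pieces (Theorems \ref{th5}, \ref{th6}, \ref{th6a}, \ref{th7}). My plan is to reduce to these cases and read off the conclusion.

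The starting point is that, since $\dim L' = 1$, Theorem \ref{th5a} gives exactly two structural possibilities: either $L \cong H(m,n) \oplus A(r \mid s)$ (when $L'$ is even) with $m+n \ge 1$ and $r = k-2m-1$, $s = l-n$; or $L \cong H_m \oplus A(r \mid s)$ (when $L'$ is odd) with $r = k-m$, $s = l-m-1$. I would then split into four subcases according to which decomposition applies and whether $r+s = 0$ or $r+s \ge 1$. For the even-center case with $r+s \ge 1$, Theorem \ref{th6} tells us that $L$ is capable if and only if $(m,n) = (1,0)$, so $L \cong H(1,0) \oplus A(k-3 \mid l)$. For the even-center case with $r+s = 0$, one has $L \cong H(m,n)$ and Theorem \ref{th5} forces $(m,n) = (1,0)$; this is just $H(1,0) \oplus A(0 \mid 0)$, which is the same family with $k=3, l=0$. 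The odd-center case is analogous: Theorem \ref{th7} handles $r+s \ge 1$ and forces $m=1$, giving $L \cong H_1 \oplus A(k-1 \mid l-2)$, while Theorem \ref{th6a} handles $r+s = 0$ and also forces $m=1$, giving $L \cong H_1$.

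For the converse direction, each of the four cases above already provides the required capability in the referenced theorem: $H(1,0) \oplus A(r \mid s)$ is capable for every $r+s \ge 1$ by the first two cases of the proof of Theorem \ref{th6}, together with the Lie-algebra fact that $H(1,0)$ itself is capable (Theorem \ref{th5}); and $H_1 \oplus A(r \mid s)$ is capable for every $r+s \ge 1$ by the first two cases of Theorem \ref{th7}, with $H_1$ itself capable by Theorem \ref{th6a}. This completes the iff.

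There is essentially no obstacle to this proof, since all of the computational work (dimension counts for $L \wedge L$ versus $L/Z^{\wedge}(L) \wedge L/Z^{\wedge}(L)$ using Theorem \ref{th3}, Lemma \ref{lemma4}, Lemma \ref{lemma6} and the multiplier formula of Theorem \ref{th3.7}) has been carried out in the preceding theorems. The only point that requires a brief justification is the edge case $r+s=0$, which is not literally covered by the statements of Theorems \ref{th6} and \ref{th7} (phrased for $r+s \ge 1$) but is immediately handled by Theorems \ref{th5} and \ref{th6a}. Thus the proof consists of invoking Theorem \ref{th5a} to enumerate possibilities and citing the appropriate prior theorem in each case; the main care needed is bookkeeping of the parameters $(k,l,m,n,r,s)$ to recover the stated forms $H(1,0) \oplus A(k-3 \mid l)$ and $H_1 \oplus A(k-1 \mid l-2)$.
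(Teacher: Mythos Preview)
Your proposal is correct and follows essentially the same approach as the paper, which simply cites Theorems \ref{th6} and \ref{th7}; you have merely been more explicit in first invoking Theorem \ref{th5a} for the structural decomposition and in separately handling the edge case $r+s=0$ via Theorems \ref{th5} and \ref{th6a}. Your observation that $A(k-1\mid l-m-1)$ in the statement should read $A(k-1\mid l-2)$ (since $m=1$) is also correct.
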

\begin{proof}
The proof follows from Theorem \ref{th6} and Theorem \ref{th7}.
\end{proof}

\begin{theorem}\label{th9}
For a nilpotent Lie superalgebra $L$ with $\dim L'=(r \mid s)$, where $r+s=1$ and $\dim L/Z(L)>2$, is not capable.
\end{theorem}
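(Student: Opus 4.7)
The plan is to argue by contrapositive, reducing the statement to a direct dimension computation once we know which Lie superalgebras of this class are capable. The starting point is Theorem~\ref{th8}: among finite-dimensional nilpotent Lie superalgebras $L$ with $\dim L' = (r \mid s)$, $r+s = 1$, the capable ones are precisely $L \cong H(1,0) \oplus A(k-3 \mid l)$ and $L \cong H_{1} \oplus A(k-1 \mid l-2)$. Consequently, to prove the proposition it suffices to show that for every $L$ on this list one has $\dim L/Z(L) = 2$; then the hypothesis $\dim L/Z(L) > 2$ rules out both capable shapes.

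For the first case, $L \cong H(1,0) \oplus A(k-3 \mid l)$, I would use that the center of a direct sum of Lie superalgebras is the direct sum of the centers, together with the description of $H(1,0) = \langle x_{1}, x_{2}, z \mid [x_{1}, x_{2}] = z \rangle$ from Theorem~\ref{th3.4}. The center of $H(1,0)$ is the even one-dimensional space $\langle z \rangle$, and $A(k-3 \mid l)$ is abelian, so $Z(L) = \langle z \rangle \oplus A(k-3 \mid l)$, giving $L/Z(L) \cong H(1,0)/\langle z \rangle \cong A(2 \mid 0)$ of total dimension $2$.

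For the second case, $L \cong H_{1} \oplus A(k-1 \mid l-2)$, the same principle applies using the description of $H_{1}$ from Theorem~\ref{th3.6}: $Z(H_{1})$ is the odd one-dimensional space $\langle z \rangle$, hence $Z(L) = \langle z \rangle \oplus A(k-1 \mid l-2)$ and $L/Z(L) \cong H_{1}/\langle z \rangle \cong A(1 \mid 1)$, again of total dimension $2$. Combining the two cases, every capable $L$ in this class satisfies $\dim L/Z(L) = 2$, which contradicts the hypothesis $\dim L/Z(L) > 2$ and completes the proof.

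There is no real obstacle here; the entire argument is a bookkeeping consequence of Theorem~\ref{th8} and the explicit presentations of $H(1,0)$ and $H_{1}$. The only point worth double-checking is the parity of the center in each Heisenberg model so that the quotient is correctly identified as $A(2 \mid 0)$ versus $A(1 \mid 1)$; both have total dimension $2$, which is what the statement needs.
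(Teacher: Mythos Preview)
Your argument is correct and is essentially the contrapositive of the paper's own proof. The paper applies the structure theorem (Theorem~\ref{th5a}) to write $L \cong H(m,n)\oplus A(\cdot)$ or $L \cong H_m \oplus A(\cdot)$, computes $L/Z(L) \cong A(2m\mid n)$ (respectively $A(m\mid m)$), observes that $\dim L/Z(L)>2$ forces $(m,n)\neq(1,0)$ (respectively $m>1$), and then invokes Theorems~\ref{th6} and~\ref{th7} to conclude non-capability; you instead start from Theorem~\ref{th8}, which already packages Theorems~\ref{th6} and~\ref{th7}, and verify that both capable models have $\dim L/Z(L)=2$. The underlying ingredients are identical, and your contrapositive organization is slightly cleaner since it avoids the case split on the possible values of $(m,n)$.
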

\begin{proof}
From Theorem \ref{th5a}, if $r=1, s=0$ we have $L \cong H(m , n)\oplus A(k-2m-1 \mid l-n)$. Then 
\begin{align*}
L/Z(L)  &\cong \frac{H(m , n)\oplus A(k-2m-1  \mid l-n)}{Z(H(m , n)\oplus A(k-2m-1 \mid l-n))}\\
 &\cong \frac{H(m , n)}{Z(H(m , n))}\oplus \frac{A(k-2m-1 \mid l-n)} {A(k-2m-1 \mid l-n)}\\
 &\cong \frac{H(m , n)}{Z(H(m , n))} \cong A(2m \mid n).
\end{align*}
Since, $\dim L/Z(L)>2$, we have possible values for $m$ are $m=0$, $m=1$ or $m > 1$ which imply $n>2$, $n \geq 1$ or $n\geq 0$ respectively. Now using Theorem \ref{th6} we conclude that $L$ is not capable. If $\dim L'=(0 \mid 1)$ then using Theorem \ref{th5a}, $L\cong H_{m}\oplus A(m \mid m-1)$. Evidently we have $L/Z(L) \cong \frac{H_{m}}{Z(H_{m})}$. Again as $\dim L/Z(L) >2$ so $m >1$. Hence by Theorem \ref{th7}. Hence $L$ is not capable.
\end{proof}
\subsection{An application }

      We list here all finite dimensional nilpotent Lie superalgebras $ L$ with corank $t(L) \leq 4$. With this result in hand we show that there always exist finite dimensional nilpotent capable Lie superalgebra of each corank.

\begin{theorem}\cite [See Theorem 5.1]{SN2018b}\label{th10}
Let $L$ be a nilpotent Lie superalgebra, $\dim L=(m \mid n)$. Then
\begin{enumerate}[(i)]
\item $t(L)=0$ if and only if $L \cong A(m \mid n)$
\item $t(L)=1$ if and only if $L \cong H(1 , 0)$\\
\item $t(L)=2$ if and only if $L \cong H(1 , 0)\oplus A(1 \mid 0)$ or $H(0 , 1)$\\
\item $t(L)=3$ if and only if $L \cong H(1 , 0)\oplus A(2 \mid 0)$, $H(0 , 1)\oplus A(1 \mid 0)$, $H(0 , 2)$, $H_{1}$ or $H(0 , 1)\oplus A(0 \mid 1)$\\
\item $t(L)=4$ if and only if $L \cong H(1 , 0)\oplus A(3 \mid 0)$, $L_{5,0}$, $L_{4 , 0}$, $H(0 , 3)$, $H_{1} \oplus A(1 \mid 0)$, $H(0,1)\oplus A(1 \mid 1)$, $H(0 , 1)\oplus A(0 \mid 2)$, $H(0 , 1)\oplus A(2 \mid 0)$ or $H_{1}\oplus A(0 \mid 1)$  or $H_{1} \oplus A(2 \mid 0).$\\
\end{enumerate}
\end{theorem}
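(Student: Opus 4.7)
The plan is to establish both implications of the classification. For the forward (``if'') direction, each algebra in the list is analyzed by direct computation of $\dim \mathcal{M}(L)$ and compared against the maximum $\tfrac{1}{2}[(m+n)^2+(n-m)]$ from Theorem~\ref{th3.2}. For case (i), Theorem~\ref{th3.3} gives
\[ \dim \mathcal{M}(A(m\mid n)) = \tfrac{1}{2}(m^2+n^2+n-m)+mn = \tfrac{1}{2}[(m+n)^2+(n-m)], \]
so $t(A(m\mid n))=0$. For (ii)--(v), I would apply Theorem~\ref{th3.4} (even Heisenberg), Theorem~\ref{th3.6} (odd Heisenberg), and Theorem~\ref{th3.7} (the direct-sum formula $\dim\mathcal{M}(H\oplus K)=\dim\mathcal{M}(H)+\dim\mathcal{M}(K)+\dim(H/H'\otimes K/K')$) to each $H\oplus A(r\mid s)$ appearing in the statement, and confirm the predicted value of $t(L)$ by arithmetic.

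For the reverse (``only if'') direction, the strategy is to show that small $t(L)$ forces the derived subalgebra to be small. I would invoke a sharpened bound on $\dim \mathcal{M}(L)$ in terms of $\dim L'$, analogous to the Niroomand--Russo refinement of Moneyhun's bound whose superalgebra counterpart is developed in \cite{Nayak2018, SN2018b}. This bound should give that $\dim L'\geq 2$ forces $t(L)\geq 5$, which handles (i)--(iv) and most of (v) by reducing to the case $\dim L'\leq 1$. Theorem~\ref{th5a} then decomposes any such $L$ as $H(m,n)\oplus A(k-2m-1\mid l-n)$ or $H_m\oplus A(k-m\mid l-m-1)$, and a careful enumeration of the admissible parameters $(m,n,k,l)$ against the prescribed corank (using Theorem~\ref{th3.7} once more to compute $\dim\mathcal{M}$ of each candidate) yields exactly the algebras listed in (i)--(iv).

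The remaining subtlety sits in case (v) with the exceptional algebras $L_{4,0}$ and $L_{5,0}$, which presumably have $\dim L'\geq 2$. Here I would perform a finer case analysis: list all nilpotent Lie superalgebras with $\dim L'=2$ whose total dimension is compatible with $t(L)=4$, compute each multiplier directly from a free presentation as in the worked example of $H_1$ before Lemma~\ref{lemma3}, and retain only those achieving $t(L)=4$.

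The main obstacle will be establishing the sharpened upper bound for $\dim\mathcal{M}(L)$ in the super setting, together with the enumeration of the exceptional $\dim L'=2$ family in (v). The bookkeeping is heavier than in the Lie algebra case because the derived subalgebra may live in either $L_{\bar{0}}$ or $L_{\bar{1}}$, and the symmetric self-bracket $[y,y]$ on odd generators produces extra contributions to $\mathcal{M}(L)$ that must be tracked carefully through Theorems~\ref{th3.4}, \ref{th3.6}, and \ref{th3.7}.
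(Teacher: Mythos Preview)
The paper does not supply its own proof of this theorem: it is stated with the citation \cite[See Theorem 5.1]{SN2018b} and used as a black box in the subsequent application. There is therefore no in-paper argument to compare your proposal against; the actual proof lives in the cited preprint \cite{SN2018b}, and the exceptional algebras $L_{4,0}$ and $L_{5,0}$ are likewise defined only there, not in the present paper.

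That said, one internal inconsistency in your outline is worth flagging. You assert that the sharpened Niroomand--Russo type bound ``should give that $\dim L'\geq 2$ forces $t(L)\geq 5$,'' yet in the very next paragraph you treat $L_{4,0}$ and $L_{5,0}$ as algebras with $\dim L'\geq 2$ and $t(L)=4$. These two claims are incompatible: if such algebras exist, the bound cannot be that strong, and the reduction to $\dim L'\leq 1$ for cases (i)--(iv) must rest on something finer (for instance, separate bounds depending on whether the one- or two-dimensional derived ideal sits in the even or odd part, together with a low-dimensional search). Your overall strategy---compute multipliers of the listed algebras via Theorems~\ref{th3.3}, \ref{th3.4}, \ref{th3.6}, \ref{th3.7} for one direction, and bound $\dim L'$ then invoke Theorem~\ref{th5a} for the other---is the natural one and almost certainly what \cite{SN2018b} does, but the precise inequality you need and the handling of the $\dim L'=2$ exceptions require the details of that reference rather than anything available in the present paper.
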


Now we obtain a result to find corank of direct sum of two finite dimensional Lie superalgebras when corank of each is known.

\begin{lemma}
Let $L$ and $K$ be two Lie superalgebras of dimension $(m \mid n)$ and $(r \mid s)$. Then
\[t(L\oplus K)=t(L)+t(K)+(m+n)(r+s)-\dim(L/L'\otimes K/K') \]
\end{lemma}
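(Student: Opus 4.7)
The plan is to unwind the definition of corank, apply the multiplier direct-sum formula from Theorem \ref{th3.7}, and then verify the identity by expanding squares.

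First, by the defining formula of corank for a nilpotent Lie superalgebra of superdimension $(m\mid n)$, we have
\[
\dim\mathcal{M}(L)=\tfrac{1}{2}\bigl[(m+n)^{2}+(n-m)\bigr]-t(L),
\]
and analogously for $K$ with $(r\mid s)$. Since $L\oplus K$ has superdimension $(m+r\mid n+s)$, its total dimension is $(m+n)+(r+s)$, and the corank of $L\oplus K$ satisfies
\[
\dim\mathcal{M}(L\oplus K)=\tfrac{1}{2}\bigl[(m+n+r+s)^{2}+((n+s)-(m+r))\bigr]-t(L\oplus K).
\]

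Next, I invoke Theorem \ref{th3.7}, which gives the isomorphism
\[
\mathcal{M}(L\oplus K)\;\cong\;\mathcal{M}(L)\oplus\mathcal{M}(K)\oplus\bigl(L/L'\otimes K/K'\bigr).
\]
Taking total dimensions on both sides yields
\[
\dim\mathcal{M}(L\oplus K)=\dim\mathcal{M}(L)+\dim\mathcal{M}(K)+\dim\bigl(L/L'\otimes K/K'\bigr).
\]

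Finally, I substitute the three corank expressions into this equation and rearrange to solve for $t(L\oplus K)$. Setting $a=m+n$, $b=r+s$, the key algebraic observation is
\[
\tfrac{1}{2}\bigl[(a+b)^{2}+(n+s-m-r)\bigr]=\tfrac{1}{2}\bigl[a^{2}+(n-m)\bigr]+\tfrac{1}{2}\bigl[b^{2}+(s-r)\bigr]+ab,
\]
so the quadratic parts split as $t(L)+t(K)$ contributions plus the cross term $ab=(m+n)(r+s)$. Matching terms then produces
\[
t(L\oplus K)=t(L)+t(K)+(m+n)(r+s)-\dim\bigl(L/L'\otimes K/K'\bigr),
\]
which is the required identity. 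There is no real obstacle here beyond bookkeeping; the only subtle point is to be consistent about using total (not super) dimensions throughout, which is forced by the scalar definition of $t(L)$ coming from Theorem \ref{th3.2}.
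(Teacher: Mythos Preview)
Your proof is correct and follows essentially the same approach as the paper: both start from the definition of corank, invoke Theorem~\ref{th3.7} to decompose $\dim\mathcal{M}(L\oplus K)$, and then verify the identity by an elementary quadratic expansion. Your bookkeeping via the total dimensions $a=m+n$ and $b=r+s$ is in fact a bit cleaner than the paper's, which splits into even and odd parts separately before collapsing the cross terms.
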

\begin{proof}
By definition of corank and using Theorem \ref{th3.7}
\begin{align*}
t(L\oplus K) &=\dfrac{1}{2}(m+r)(m+r-1)+\dfrac{1}{2}(n+s)(n+s+1)+(m+n)(r+s)-\dim M(L\oplus K)\\
&= \dfrac{1}{2}(m+r)(m+r-1)+\dfrac{1}{2}(n+s)(n+s+1)+(m+n)(r+s)-\dfrac{1}{2}m(m-1)\\
&-\dfrac{1}{2}n(n+1)-mn+t(L)-\dfrac{1}{2}r(r-1)-\dfrac{1}{2}s(s+1)-rs+t(K)-\dim(L/L'\otimes K/K')\\
&=t(L)+t(K)+(m+n)(r+s)-\dim(L/L'\otimes K/K').
\end{align*}
\end{proof}
Precisely we mentation corank of Heisenberg Lie superalgebra and abelian Lie superalgebra as follows.
\begin{lemma}
$t(H(m , n))=2m+n+1$ for $m+n\geq2$, $t(H_{m})=2m+2$ for $m\geq2$ and $t(A(m \mid n))=0$ for $m+n\geq 1$.
\end{lemma}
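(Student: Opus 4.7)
The plan is to obtain each of the three formulas by a direct substitution into the defining equality of the corank, namely
\[
t(L) \;=\; \tfrac{1}{2}\bigl[(m+n)^{2}+(n-m)\bigr] \;-\; \dim \mathcal{M}(L),
\]
where $(m\mid n)=\dim L$ and all multiplier dimensions are read as total (ungraded) dimensions obtained by adding the even and odd parts of the superdimensions provided in Theorems \ref{th3.3}, \ref{th3.4}, \ref{th3.6}. There is no structural step to perform, only arithmetic; the only thing to be careful about is to keep track of the correct superdimensions of the Heisenberg algebras.

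First, for the abelian case, Theorem \ref{th3.3} gives
\[
\dim \mathcal{M}(A(m\mid n)) \;=\; \tfrac{1}{2}(m^{2}+n^{2}+n-m) + mn \;=\; \tfrac{1}{2}\bigl[(m+n)^{2}+(n-m)\bigr],
\]
so the bound of Theorem \ref{th3.2} is attained and $t(A(m\mid n))=0$ for every $m+n\geq 1$.

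Next, for the even-centred Heisenberg superalgebra $H(m,n)$ with $m+n\geq 2$, its superdimension is $(2m+1\mid n)$. Plugging $M:=2m+1$ and $N:=n$ into the upper-bound expression gives
\[
\tfrac{1}{2}\bigl[(M+N)^{2}+(N-M)\bigr] \;=\; 2m^{2}+m+2mn+\tfrac{n^{2}}{2}+\tfrac{3n}{2},
\]
while Theorem \ref{th3.4} yields the total multiplier dimension $2m^{2}-m+\tfrac{n(n+1)}{2}-1+2mn$. Subtracting collapses the $m^{2}$, $mn$ and $n^{2}$ terms and leaves $2m+n+1$, as required.

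Finally, for the odd-centred Heisenberg superalgebra $H_{m}$ with $m\geq 2$, Theorem \ref{th3.6} gives superdimension $(m\mid m+1)$, so the bound is $\tfrac{1}{2}\bigl[(2m+1)^{2}+1\bigr] = 2m^{2}+2m+1$, while the total multiplier dimension equals $m^{2}+(m^{2}-1)=2m^{2}-1$; their difference is $2m+2$. Because the whole argument reduces to substitution and simplification, I expect no genuine obstacle beyond bookkeeping: the only delicate point is remembering that $\dim\mathcal{M}(L)$ in the definition of $t(L)$ denotes the \emph{total} dimension, so the even and odd parts from the cited theorems must be summed before subtracting.
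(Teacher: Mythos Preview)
Your proposal is correct and follows essentially the same route as the paper: both proofs amount to substituting the known superdimensions of $H(m,n)$, $H_{m}$ and $A(m\mid n)$ together with the multiplier dimensions from Theorems~\ref{th3.3}, \ref{th3.4} and \ref{th3.6} into the defining formula $t(L)=\tfrac{1}{2}[(p+q)^{2}+(q-p)]-\dim\mathcal{M}(L)$ for $\dim L=(p\mid q)$, and simplifying. Your write-up is in fact more explicit than the paper's, which only spells out the $H(m,n)$ case and then appeals to analogy for the other two.
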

\begin{proof}
As a consequence of Theorem \ref{th3.7} and \ref{th3.4}, for $m+n\geq 2$,
 \[t(H(m , n))=\dfrac{1}{2}[(2m+n+1)^{2}+(n-2m-1)]-(2m^{2}-m+\dfrac{1}{2}n(n+1)+2mn-1).\] Hence $t(H(m , n))=2m +n+1$ and
similarly we conclude the rest results by Theorem \ref{th3.7}, \ref{th3.6}, \ref{th3.3}.
\end{proof}

\begin{theorem}
There exists at least one finite dimensional capable nilpotent Lie superalgebra of arbitrary corank.
\end{theorem}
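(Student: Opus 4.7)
The plan is to exhibit, for each non-negative integer $c$, an explicit nilpotent capable Lie superalgebra of corank $c$, using the already-established classification of capable Lie superalgebras with at most one-dimensional derived subalgebra together with the corank-of-a-direct-sum formula proved just above.

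For $c=0$ I would simply take $A(2\mid 0)$: Theorem \ref{th4} guarantees it is capable (since $m+n\geq 2$), and Theorem \ref{th10}(i) gives $t(A(2\mid 0))=0$. For $c\geq 1$ I propose the explicit family
\[
L_c:=H(1,0)\oplus A(c-1\mid 0),
\]
whose capability follows directly from Theorem \ref{th8} (equivalently, from Case-1 and Case-2 of Theorem \ref{th6}). The corank is then computed by the preceding direct-sum lemma. With $\dim H(1,0)=(3\mid 0)$, $t(H(1,0))=1$, $t(A(c-1\mid 0))=0$, and $H(1,0)/H(1,0)'\cong A(2\mid 0)$, we obtain
\[
t(L_c)=1+0+3(c-1)-\dim\bigl(A(2\mid 0)\otimes A(c-1\mid 0)\bigr)=1+3(c-1)-2(c-1)=c.
\]

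There is no serious obstacle; all the machinery is in place. The one point requiring care is that the tensor product appearing in the direct-sum corank formula is the \emph{non-abelian} one, but in the present situation both factors are abelian Lie superalgebras acting trivially on one another across the direct sum, so by Proposition \ref{prop2} it collapses to the ordinary supermodule tensor product of total dimension $\dim A(2\mid 0)\cdot\dim A(c-1\mid 0)=2(c-1)$, which is what enters the final calculation. Combining the two cases produces a capable nilpotent Lie superalgebra of every corank $c\geq 0$.
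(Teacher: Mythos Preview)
Your proof is correct and follows essentially the same approach as the paper: both apply the direct-sum corank lemma to the capable family $H(1,0)\oplus A(\,\cdot\mid\cdot\,)$ (the paper also mentions the $H_1$-family) and invoke the earlier capability theorems. Your version is in fact cleaner---you exhibit the single explicit family $L_c=H(1,0)\oplus A(c-1\mid 0)$ covering every $c\geq 1$ and compute its corank directly, whereas the paper first writes down the corank formulas for the general (and mostly non-capable) families $H(m,n)\oplus A(r\mid s)$, $H_p\oplus A(r\mid s)$, then appeals to the classification in Theorem~\ref{th10} for the small coranks $0\leq t\leq 3$.
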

\begin{proof}
For $m+n\geq2$ and $r+s\geq1$, we have 
\begin{align*}
t(H(m , n)\oplus A(r \mid s)) &= 2m+n+1+0+(2m+1+n)(r+s)-(2m+n)(r+s)\\
&=2m+n+r+s+1.
\end{align*}
Also,
 \[t(H_{p}\oplus A(r \mid s))=2p+2+(r+s).\]
Thus $H(m , n)\oplus A(r \mid s)$ and $H_{p}\oplus A(r \mid s)$ are nilpotent Lie superalgebras of corank greater than one, but from Theorem \ref{th6} and Theorem \ref{th7} we know that $H(m , n)\oplus A(r \mid s)$ and $H_{p}\oplus A(r \mid s)$ are capable if and only if $m=1, n=0$ or $p=1$ respectively. Thus there exists at least one capable Lie superalgebra of corank $\geq 4$. But from Theorem \ref{th10} with Theorem \ref{th4}, \ref{th5}, \ref{th6}, \ref{th6a} and \ref{th7} we have our desired result.
\end{proof}
\begin{corollary}
 There is a non-abelian nilpotent capable Lie superalgebra $L$, with $\dim L=(m \mid n)$, for each dimension $m+n \geq 3$.
\end{corollary}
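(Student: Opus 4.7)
The plan is to exhibit, for each integer $d \geq 3$, an explicit Lie superalgebra of total dimension $d$ drawn from the canonical families already analyzed, and to invoke the capability classifications in Theorems \ref{th5} and \ref{th6} (or, alternatively, \ref{th6a} and \ref{th7}) to certify non-abelianness, nilpotency, and capability.

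For the base case $d = 3$, I would take $L = H(1, 0)$, the three-dimensional Heisenberg Lie algebra regarded as a Lie superalgebra of dimension $(3 \mid 0)$. It is non-abelian by construction, nilpotent of class two, and capable by Theorem \ref{th5}.

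For $d \geq 4$, I would take $L = H(1, 0) \oplus A(d - 3 \mid 0)$. Then $\dim L = (d \mid 0)$, so $m + n = d$. The summand $H(1,0)$ ensures that $L$ is non-abelian, and $L$ is nilpotent as a direct sum of nilpotent Lie superalgebras. Because the abelian summand has $r + s = d - 3 \geq 1$, Theorem \ref{th6} applies with parameters $m = 1$, $n = 0$, $r = d-3$, $s = 0$ and yields that $L$ is capable.

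This corollary is essentially a packaging of the preceding classification work, so there is no substantive obstacle to overcome; the only thing to verify is that the chosen parameters fall into the capable case of the relevant theorem, which is immediate. If one prefers examples with a non-trivial odd part, the parallel construction $L = H_1 \oplus A(r \mid s)$, capable by Theorem \ref{th7}, furnishes a second family of witnesses of total dimension $3 + r + s$, realising every $d \geq 3$ with $n \geq 2$.
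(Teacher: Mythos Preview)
Your argument is correct and matches the paper's implicit reasoning: the corollary is stated without proof immediately after the theorem on arbitrary corank, whose proof already exhibits the families $H(1,0)$ and $H(1,0)\oplus A(r\mid s)$ (and the odd-center analogues $H_1\oplus A(r\mid s)$) as capable non-abelian nilpotent examples, so your explicit parametrization by total dimension $d=3+r+s$ is exactly the intended deduction. One small remark: your primary family has trivial odd part, so these witnesses are Lie algebras viewed as Lie superalgebras; this is legitimate for the corollary as stated (which quantifies only over the total dimension $m+n$), and your closing observation about $H_1\oplus A(r\mid s)$ supplies genuinely super examples when desired.
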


\end{document}